\author[M.~Balcerzak]{Marek Balcerzak}
\address{Institute of Mathematics, Lodz University of Technology, al. Politechniki 8, 93-590 Lodz, Poland}
\email{marek.balcerzak@p.lodz.pl}
\author[S.~G\l \k{a}b]{Szymon G\l \k{a}b}
\address{Institute of Mathematics, Lodz University of Technology, al. Politechniki 8, 93-590 Lodz, Poland}
\email{szymon.glab@p.lodz.pl}
\author[P.~Leonetti]{Paolo Leonetti}
\address{Department of Economics, Universit\`a degli Studi dell'Insubria, via Monte Generoso 71, 21100 Varese, Italy}
\email{leonetti.paolo@gmail.com}
\keywords{Ideal limit points; Borel and analytic ideals; simply analytic set; Fubini product; pruned trees; Wadge reduction.}
\subjclass[2010]{Primary: 40A35, 54H05. Secondary: 11B05, 28A05, 54A20.}
\title{Topological complexity of ideal limit points}
   \def\MR#1{}
\newtheorem{thm}{Theorem}[section]
\newtheorem{cor}[thm]{Corollary}%[section]
\newtheorem{lem}[thm]{Lemma}
\newtheorem{prop}[thm]{Proposition}
\theoremstyle{definition} 
\newtheorem{defi}[thm]{Definition}%[section]
\let\olddefi\defi
\renewcommand{\defi}{\olddefi\normalfont}
\let\oldquestion\question
\renewcommand{\question}{\oldquestion\normalfont}
\let\oldexample\example
\renewcommand{\example}{\oldexample\normalfont}
\newtheorem{rmk}[thm]{Remark}
\let\oldrmk\rmk
\renewcommand{\rmk}{\oldrmk\normalfont}
\newtheorem{claim}{\textsc{Claim}}
\providecommand{\MR}[1]{}
\providecommand{\MR}{\relax\ifhmode\unskip\space\fi MR }
\providecommand{\href}[2]{#2}
\begin{document}

\maketitle
\thispagestyle{empty}

\begin{abstract}
Given an ideal $\mathcal{I}$ on the nonnegative integers $\omega$ and a Polish space $X$, let $\mathscr{L}(\mathcal{I})$ be the family of subsets $S\subseteq X$ such that $S$ is the set of $\mathcal{I}$-limit points of some sequence taking values in $X$. First, we show that $\mathscr{L}(\mathcal{I})$ may attain arbitrarily large Borel complexity. Second, we prove that if $\mathcal{I}$ is a $G_{\delta\sigma}$-ideal then all elements of $\mathscr{L}(\mathcal{I})$ are closed. Third, we show that if $\mathcal{I}$ is a simply coanalytic ideal and $X$ is first countable, then every element of $\mathscr{L}(\mathcal{I})$ is simply analytic. Lastly, we studied certain structural properties and the topological complexity of minimal ideals $\mathcal{I}$ for which $\mathscr{L}(\mathcal{I})$ contains a given set. 
\end{abstract}

%%%%%%%%%%%%%%%%%%%%%%%%%%%%%%%%%%%%%%%%%%%%%%%%%%%%%%%%%%%%%%%%%%%%%%%%%%%%

\section{Introduction}\label{sec:intro}

Let $\mathcal{I}$ be an ideal on the nonnegative integers $\omega$, that is, 
%a proper subset of $\mathcal{P}(\omega)$ closed under taking subsets and finite unions, and containing the family $\mathrm{Fin}$ of finite sets. 
a subset of $\mathcal{P}(\omega)$ closed under taking subsets and finite unions. Unless otherwise stated, it is assumed that $\mathcal{I}$ is admissible, namely, $\omega\notin \mathcal{I}$ and that $\mathcal{I}$ contains the family $\mathrm{Fin}$ of finite subsets of $\omega$. 
Intuitively, the ideal $\mathcal{I}$ represents the family of \textquotedblleft small\textquotedblright\ subsets of $\omega$. An important example is the family of asymptotic density zero sets
$$
\mathcal{Z}:=\left\{S\subseteq \omega: \lim_{n\to \infty} \frac{|S \cap [0,n]|}{n}=0\right\}.
$$
Define $\mathcal{I}^+:=\mathcal{P}(\omega)\setminus \mathcal{I}$. 
Ideals are regarded as subsets of the Cantor space $\{0,1\}^\omega$, hence we can speak about their topological complexity. For instance, $\mathrm{Fin}$ is a $F_\sigma$-ideal, and $\mathcal{Z}$ is a $F_{\sigma\delta}$-ideal which is not $F_\sigma$. 

Pick also a sequence $\bm{x}=(x_n)$ be taking values in a topological space $X$.
Then, we denote by $\Lambda_{\bm{x}}(\mathcal{I})$ the set of $\mathcal{I}$\emph{-limit points} of $\bm{x}$, that is, the set of all $\eta \in X$ for which there exists a subsequence $(x_{n_k})$ such that 
$$
\lim_{k\to \infty} x_{n_k}=\eta
\quad\text{ and }\quad 
\{n_k: k \in \omega\}\in \mathcal{I}^+.
$$
It is well known that, even in the case where ${\bm{x}}$ is a real bounded sequence, it is possible that $\Lambda_{\bm{x}}(\mathcal{Z})$ is the empty set, see \cite[Example 4]{MR1181163}. 
The topological nature of the sets of $\mathcal{I}$-limits points $\Lambda_{\bm{x}}(\mathcal{I})$ and their relationship with the slightly weaker variant of $\mathcal{I}$-cluster points have been studied in \cite{BL}, cf. also \cite{Xi, Xi2}. 

In this work, we continue along this line of research. To this aim, we introduce our main definition:

\begin{defi}\label{defi:setIlimitpoints}
Let $X$ be a topological space and $\mathcal{I}$ be an ideal on $\omega$. We denote by $\mathscr{L}_X(\mathcal{I})$ the family of sets of $\mathcal{I}$-limit points of sequences ${\bm{x}}$ taking values in $X$ together with the emptyset, that is, 
$$
\mathscr{L}_X(\mathcal{I}):=\left\{A\subseteq X: A=\Lambda_{\bm{x}}(\mathcal{I}) \text{ for some sequence }{\bm{x}} \in X^\omega\right\}\cup \{\emptyset\}.
$$
If the topological space $X$ is understood, we write simply $\mathscr{L}(\mathcal{I})$. 
\end{defi}

A remark is in order about the the addition of $\{\emptyset\}$ in the above definition: it has been proved by Meza-Alc\'{a}ntara in \cite[Section 2.7]{Mezathesis} that, if $X=[0,1]$, then there exists a $[0,1]$-valued sequence $\bm{x}$ such that $\Lambda_{\bm{x}}(\mathcal{I})=\emptyset$ if and only if there exists a function $\phi: \omega \to \mathbb{Q}\cap [0,1]$ such that $\phi^{-1}[A] \in \mathcal{I}$ for every set $A\subseteq \mathbb{Q}\cap [0,1]$ with at most finitely many limit points; cf. also \cite[Proposition 6.4]{MR3034318} and, more generally, \cite{FKL24} for analogues in compact uncountable spaces. On the other hand, if $X$ is not compact, it is easy to see that, for every ideal $\mathcal{I}$, there is a sequence with no $\mathcal{I}$-limit points.  
Thus, the addition of $\{\emptyset\}$ in the above definition avoids the repetition of known results in the literature and 
%, as we will see shortly, 
to add further subcases based on the topological structure of the underlying space $X$. 

%\subsection{Known results} 
We summarize in Theorem \ref{thm:oldthm} and Theorem \ref{thm:xi} below the known results from \cite{BL, Xi, Xi2} about the families $\mathscr{L}_X(\mathcal{I})$. %, provided that $\mathcal{I}$ has low topological complexity. 

For, given a topological space $X$ and an ordinal $1\le \alpha <\omega_1$, we use the standard Borel pointclasses notations $\Sigma^0_\alpha(X)$ and $\Pi^0_\alpha(X)$, so that $\Sigma^0_1(X)$ stands for the open sets of $X$, $\Pi^0_1(X)$ for the closed sets, $\Sigma_2^0(X)$ for the $F_\sigma$-sets, etc.; 
we denote by $\Delta^0_\alpha(X):=\Sigma^0_\alpha(X) \cap \Pi^0_\alpha(X)$ the ambiguous classes; 
also, if $X$ is a Polish space, %, similarly, 
$\Sigma^1_1(X)$ stands for the analytic sets, $\Pi^1_1(X)$ for the coanalytic sets, etc., see e.g. 
%etc. (note that we do \emph{not} assume that the topology on $X$ is metrizable, as in 
\cite[Section 11.B]{K} or \cite[Section 3.6]{S}. Again, we suppress the reference to the underlying space $X$ if it is clear from the context. 

Recall that an ideal $\mathcal{I}$ is a \emph{P-ideal} if it is $\sigma$-directed modulo finite sets, that is, for every sequence $(A_n)$ with values in $\mathcal{I}$ there exists $A \in \mathcal{I}$ such that $A_n\setminus A$ is finite for all $n \in \omega$. Important examples include the $\Sigma^1_1$ P-ideals, which are known to be necessarily $\Pi^0_3$. 
%, and the (complements of) P-points. 
A topological space $X$ is \emph{discrete} if it contains only isolated points.

\begin{thm}\label{thm:oldthm}
Let $X$ be a nondiscrete first countable Hausdorff space. Then\textup{:} 
\begin{enumerate}[label={\rm (\roman{*})}]
\item \label{old:prelim1} $\mathscr{L}(\mathcal{I})\subseteq \Pi^0_1$ if and only if $\mathcal{I}$ is a $\Sigma_2^0$ ideal, provided that $\mathcal{I}$ is a $\Sigma^1_1$ P-ideal.
\item \label{old:prelim2} $\mathscr{L}(\mathcal{I})\subseteq \Sigma^0_2$, provided that $\mathcal{I}$ is a $\Sigma^1_1$ P-ideal.
\end{enumerate}
If, in addition, all closed subsets of $X$ are separable, then\textup{:}
\begin{enumerate}[label={\rm (\roman{*})}]
%[label={\rm (\roman{*}$^\prime$)}]
\setcounter{enumi}{2}
\item \label{old:prelim3} $\mathscr{L}(\mathcal{I})=\Pi_1^0$, provided that $\mathcal{I}$ is a $\Sigma_2^0$ ideal.
\item \label{old:prelim4} $\mathscr{L}(\mathcal{I})=\Sigma_2^0$, provided that $\mathcal{I}$ is $\Sigma_1^1$ P-ideal which is not $\Sigma_2^0$.
\end{enumerate}
\end{thm}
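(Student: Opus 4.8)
The plan is to reduce all four parts to a single pointwise description of $\Lambda_{\bm x}(\mathcal I)$ in terms of a submeasure. Fix at each $\eta\in X$ a decreasing base of open neighbourhoods $(U_m(\eta))_m$ (first countability) and set $E_m(\eta):=\{n:x_n\in U_m(\eta)\}$. By Solecki's representation a $\Sigma^1_1$ P-ideal is an exhaustive ideal $\mathcal I=\mathrm{Exh}(\phi)$ for a lower semicontinuous submeasure $\phi$, where $\|A\|_\phi:=\lim_n\phi(A\setminus[0,n])$ and $A\in\mathcal I^+$ iff $\|A\|_\phi>0$; by Mazur's theorem an $F_\sigma$ ($=\Sigma^0_2$) ideal is $\mathcal I=\mathrm{Fin}(\psi)$, where $A\in\mathcal I^+$ iff $\psi(A)=\infty$. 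The master lemma I would prove first is: in the exhaustive case $\eta\in\Lambda_{\bm x}(\mathcal I)$ iff $\inf_m\|E_m(\eta)\|_\phi>0$, and in the $F_\sigma$ case iff $\psi(E_m(\eta))=\infty$ for every $m$. The forward directions are immediate; the backward one is a diagonalisation: since $(E_m(\eta))_m$ decreases, one picks $n_0<n_1<\cdots$ so that $E_m(\eta)\cap[n_m,n_{m+1})$ carries submeasure bounded below, and sets $A:=\bigcup_m(E_m(\eta)\cap[n_m,n_{m+1}))$; then $A\subseteq^* E_m(\eta)$ for all $m$, $A\in\mathcal I^+$, and $x_n\to\eta$ along $A$.

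For the upper bounds (ii), and the forward inclusions of (i) and (iii), I would write $\Lambda_{\bm x}(\mathcal I)=\bigcup_k\Lambda_k$ with $\Lambda_k:=\{\eta:\forall m,\ \|E_m(\eta)\|_\phi\ge 1/k\}$, and show each $\Lambda_k$ is closed. The point is the nesting of neighbourhood bases: if $\eta_i\to\eta$ with $\eta_i\in\Lambda_k$, then for each $m$ the open set $U_m(\eta)$ is eventually a neighbourhood of $\eta_i$, so some $U_{m'}(\eta_i)\subseteq U_m(\eta)$, whence $E_{m'}(\eta_i)\subseteq E_m(\eta)$ and $\|E_m(\eta)\|_\phi\ge\|E_{m'}(\eta_i)\|_\phi\ge 1/k$; letting $m$ vary gives $\eta\in\Lambda_k$. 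This yields $\Lambda_{\bm x}(\mathcal I)\in\Sigma^0_2$, proving (ii). In the $F_\sigma$ case the master lemma gives $\Lambda_{\bm x}(\mathcal I)=\bigcap_m\{\eta:\psi(E_m(\eta))=\infty\}$, and the same nesting argument shows this single set is closed; this proves the inclusion $\mathscr L(\mathcal I)\subseteq\Pi^0_1$ needed for the forward half of (iii) and for the implication $(\Leftarrow)$ of (i), neither of which uses separability.

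For the reverse inclusions I would realise prescribed sets as limit point sets. For (iii) let $C\subseteq X$ be closed; using that closed subsets of $X$ are separable, fix a countable dense $(d_i)_i\subseteq C$. Partition $\omega$ into infinitely many pieces $B_i\in\mathcal I^+$—for an $F_\sigma$ ideal $\mathrm{Fin}(\psi)$ by splitting $\omega$ into intervals each of $\psi$-submeasure at least $1$ and distributing infinitely many intervals to each $B_i$—and put $x_n:=d_i$ for $n\in B_i$. Then each $d_i\in\Lambda_{\bm x}(\mathcal I)$, the value set lies in $C$ so $\Lambda_{\bm x}(\mathcal I)\subseteq\overline{\{d_i\}}=C$, and closedness of $\Lambda_{\bm x}(\mathcal I)$ forces equality; the empty set is covered by Definition~\ref{defi:setIlimitpoints}. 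For (iv) and the implication $(\Rightarrow)$ of (i) I would use that a $\Sigma^1_1$ P-ideal which is not $\Sigma^0_2$ admits pairwise disjoint sets $A_k\in\mathcal I^+$ with $\|\bigcup_{j\ge m}A_j\|_\phi\to 0$ (this is exactly the failure of the $F_\sigma$ dichotomy for $\mathrm{Exh}(\phi)$). Assigning the $A_k$ to prescribed points lets one place positive-but-vanishing norms so that a given $F_\sigma$ set $\bigcup_k C_k$ is realised while accumulation points outside it have $\inf_m\|E_m\|_\phi=0$; for (i)$(\Rightarrow)$ it already suffices to realise $\{\eta_k:k\}$ for a nontrivial convergent sequence $\eta_k\to\eta$, producing a non-closed element of $\mathscr L(\mathcal I)$ without any separability assumption.

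The main obstacle is the realisation step of (iv): one must simultaneously (a) guarantee that the constructed sequence has \emph{no} extra $\mathcal I$-limit points, which I would control by keeping all values inside the target $F_\sigma$ set and using the closedness of the approximating pieces together with the vanishing of the tail norms $\|\bigcup_{j\ge m}A_j\|_\phi$, and (b) secure the structural input that non-$F_\sigma$-ness of the analytic P-ideal supplies disjoint positive-norm families of arbitrarily small norm—this is where the dichotomy between $F_\sigma$ and genuinely exhaustive ideals enters, and is the only place the hypothesis ``not $\Sigma^0_2$'' is used. A secondary technical point, already handled above, is that $X$ need not be metric, so the neighbourhood-nesting lemma must be phrased purely in terms of bases rather than balls.
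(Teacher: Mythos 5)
The paper itself proves this theorem only by citation to \cite{BL}, and your proposal is in substance a reconstruction of the arguments of that cited paper: the same machinery (Solecki's $\mathrm{Exh}(\phi)$ representation of analytic P-ideals, Mazur's $\mathrm{Fin}(\psi)$ representation of $F_\sigma$ ideals, the pointwise description of $\Lambda_{\bm x}(\mathcal I)$ via the norms $\|E_m(\eta)\|_\phi$, the diagonalisation over the decreasing sets $E_m(\eta)$, and the neighbourhood-nesting argument for closedness of each $\Lambda_k$) is exactly what drives Theorems 2.2, 2.5, 2.7 and 3.4 of \cite{BL}. The master lemma, the proof of (ii), the forward halves of (i) and (iii), and the non-closed witness for (i)$(\Rightarrow)$ built from disjoint positive sets with vanishing tail norms are all correct as written.

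Two points deserve attention. First, in the realisation step of (iii) your recipe ``split $\omega$ into intervals each of $\psi$-submeasure at least $1$ and give infinitely many intervals to each $B_i$'' does not by itself force $\psi(B_i)=\infty$: a submeasure is subadditive but not superadditive, so monotonicity only yields $\psi(B_i)\ge 1$. The fix is immediate — by lower semicontinuity and $\psi(\omega)=\infty$ choose intervals $I_j$ with $\psi(I_j)\ge j$, so that any $B_i$ containing infinitely many of them satisfies $\psi(B_i)=\sup_j\psi(I_j)=\infty$ — but as stated the step is not justified. Second, the realisation step of (iv) is only a strategy, and it is the genuinely hard part of the theorem: one must assign, for each level $k$ of an increasing closed exhaustion $F=\bigcup_k C_k$, infinitely many pairwise disjoint positive sets of norm bounded below by some $\varepsilon_k>0$ (one per point of a countable dense subset of $C_k$), while keeping the norm of the union of \emph{all} sets of level $>k$ tending to $0$; your sketch correctly identifies that the values must stay inside $F$ and that vanishing tail norms kill spurious limit points, but the simultaneous norm bookkeeping is not carried out. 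Both issues are repairable along the lines of \cite[Theorem 3.4]{BL}; neither invalidates the overall architecture of your argument.
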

%let $\mathcal{I}$ be an analytic P-ideal
\begin{proof}
It follows by \cite[Theorems 2.2, 2.5, 2.7, and 3.4]{BL}. 
\end{proof}

To state the next result, we recall some further definitions. 
An ideal $\mathcal{I}$ has the \emph{hereditary Baire property} if the restriction $\mathcal{I}\upharpoonright A:=\{S\cap A: S \in \mathcal{I}\}$ has the Baire property for every $A \in \mathcal{I}^+$. Note that all analytic ideals have the hereditary Baire property: indeed, the proof goes verbatim as in 
\cite[Theorem 3.13]{MR4358610}, considering that analytic sets are closed under continuous preimages, and that they have the Baire property, see e.g. \cite[Proposition 4.1.2 and Theorem 4.3.2]{S}. In addition, there exist ideals with the Baire property but without the hereditary Baire property, see e.g. \cite[Proposition 2.1]{MR3624786}.
%https://mat.ug.edu.pl/~akwela/papers/The_ideal_test_for_the_divergence_of_a_series.pdf

Also, an ideal $\mathcal{I}$ on $\omega$ is said to be a $P^+$\emph{-ideal} if, for every decreasing sequence $(A_n)$ with values in $\mathcal{I}^+$, there exists $A \in \mathcal{I}^+$ such that $A\setminus A_n$ is finite for all $n \in \omega$. It is known that all $F_\sigma$-ideals are $P^+$-ideals, see 
\cite{MR748847} and 
\cite[Observation 2.2]{MR2861027}. 
We remark that $P^+$-ideal may have arbitrarily high Borel complexity, as it has been proved in \cite[p. 2031]{MR3692233} and \cite[Example 2.6]{MR2861027}, 
cf. also \cite{MR4330212}. 
%Other examples of $P^+$-ideals can be found in \cite{MR4330212}. 

Moreover, an ideal $\mathcal{I}$ is called a \emph{Farah ideal} if there exists a sequence $(K_n)$ of hereditary compact subsets in $\mathcal{P}(\omega)$ such that $S \in \mathcal{I}$ if and only if for all $n \in \omega$ there exists $k \in \omega$ such that $S\setminus [0,k] \in K_n$, see \cite{MR2048515, MR2777744, MR2849045}. It is known that all analytic P-ideals are Farah and that all Farah ideals are $F_{\sigma\delta}$. On the other hand, it is still unknown whether 
the converse holds, namely, all $F_{\sigma\delta}$ ideals are Farah ideals, 
%the converse of the latter statement holds, 
see \cite{MR1711328} and \cite[p. 60]{MR2777744}. 

\begin{thm}\label{thm:xi}
Let $X$ be a first countable Hausdorff space. Then\textup{:}
\begin{enumerate}[label={\rm (\roman{*})}]
\item \label{item:1xi} $\mathscr{L}(\mathcal{I}) \subseteq \Pi^0_1$, provided that $\mathcal{I}$ is $P^+$-ideal.
\item \label{item:2xi} $\mathscr{L}(\mathcal{I}) \subseteq \Pi^0_1$ if and only if $\mathcal{I}$ is a $P^+$-ideal, provided that $\mathcal{I}$ has the hereditary Baire property and $X$ is nondiscrete metrizable. 
\item \label{item:3xi} $\mathscr{L}(\mathcal{I}) \subseteq \Sigma^0_2$, provided that $\mathcal{I}$ is a Farah ideal.
\item \label{item:3xiB} $\mathscr{L}(\mathcal{I})=\{\{\eta\}: \eta \in X\}$ if and only if $\mathcal{I}$ is maximal, provided that $|X|\ge 2\textup{.}$
\end{enumerate}
If, in addition, $X$ is second countable, then\textup{:}
\begin{enumerate}[label={\rm (\roman{*})}]
%[label={\rm (\roman{*}$^\prime$)}]
\setcounter{enumi}{4}
\item \label{item:4xi} $\Pi^0_1 \subseteq \mathscr{L}(\mathcal{I})$, provided that $\mathcal{I}$ has the hereditary Baire property.
\item \label{item:5xi} $\Sigma^0_2 \subseteq \mathscr{L}(\mathcal{I})$, provided that $\mathcal{I}$ has the hereditary Baire property and is not a $P^+$-ideal. 
\end{enumerate}
\end{thm}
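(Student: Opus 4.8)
The plan is to organize the six assertions by their logical role: (i) and (iii) are upper bounds on the complexity of a single set $\Lambda_{\bm x}(\mathcal I)$, (v) and (vi) are realization (lower bound) statements, and (ii) and (iv) are characterizations whose nontrivial half is a construction. Throughout, first countability lets me replace convergence by a fixed decreasing neighborhood basis $(U^\eta_j)_j$ at each $\eta$, and reduce closedness to sequential closedness. The two structural inputs I will exploit are: for an ideal with the hereditary Baire property, the Talagrand characterization of meager ideals applied to $\mathcal I\res A$ for each $A\in\mathcal I^+$, which yields a partition of $A$ into finite blocks such that every union of infinitely many blocks lies in $\mathcal I^+$; and, for a Farah ideal, the fact that each witnessing set $K_n$ is compact and hereditary, so that a set misses $K_n$ exactly when one of its finite subsets does.

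The two easy items come first. For (i), given $\eta$ in the closure of $\Lambda_{\bm x}(\mathcal I)$, pick $\eta_m\to\eta$ in $\Lambda_{\bm x}(\mathcal I)$; the sets $A_k:=\{i:x_i\in U^\eta_k\}$ are decreasing and each lies in $\mathcal I^+$, since it contains a cofinite piece of the positive set witnessing some $\eta_m\in U^\eta_k$, so a positive pseudo-intersection provided by the $P^+$ property gives a positive set along which $\bm x\to\eta$. Item (iv) is about the filter structure: a maximal ideal contains no two disjoint positive sets (their intersection would be empty yet in the dual filter), so Hausdorffness forces every nonempty $\Lambda_{\bm x}(\mathcal I)$ to be a singleton, each realized by a constant sequence, and hence the nonempty members of $\mathscr L(\mathcal I)$ are exactly the singletons; conversely nonmaximality gives $S,\omega\setminus S\in\mathcal I^+$, and the two-valued sequence on this partition produces a two-point limit set.

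For the Farah upper bound (iii) I will decompose $\Lambda_{\bm x}(\mathcal I)=\bigcup_n\Lambda_n$, where $\Lambda_n$ consists of the $\eta$ for which some $B$ satisfies $B\subseteq^* T^\eta_j:=\{i:x_i\in U^\eta_j\}$ for all $j$ and has every tail $B\setminus[0,k]$ outside $K_n$; this is exact because $B\in\mathcal I^+$ means precisely that some $n$ keeps every tail of $B$ off $K_n$. It then suffices to show each $\Lambda_n$ is closed. Given $\eta_m\to\eta$ in $\Lambda_n$ with witnesses $B_m$, I perform a splicing: for each $j$ all large $m$ have $B_m\subseteq^* T^\eta_j$, and since $K_n$ is compact and hereditary every far tail of $B_m$ contains a finite set off $K_n$; choosing $m_j$ with $B_{m_j}\subseteq^* T^\eta_j$ and finite chunks $\sigma_j\subseteq B_{m_j}$ progressively farther out, inside $T^\eta_1\cap\cdots\cap T^\eta_j$ and off $K_n$, yields $B=\bigcup_j\sigma_j$ with $B\subseteq^* T^\eta_j$ for all $j$ and with non-$K_n$ finite sets arbitrarily far out, so $\eta\in\Lambda_n$. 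Hence $\Lambda_{\bm x}(\mathcal I)\in\Sigma^0_2$.

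The realization results run the converse construction, and the interplay needed for (vi) is the main obstacle. For (v) fix a nonempty closed $F$, separable because $X$ is second countable, with a dense sequence $(d_i)$; using the hereditary Baire property (which makes $\mathcal I$ meager) fix blocks $(I_k)$ partitioning $\omega$ with every infinite union of blocks positive, and set $\bm x\equiv d_{c(k)}$ on $I_k$ for some $c$ with infinite fibers. All values lie in the closed $F$, so $\Lambda_{\bm x}(\mathcal I)\subseteq F$, while for each $\zeta\in F$ a choice $d_{i_t}\to\zeta$ with blocks $I_{k_t}$, $c(k_t)=i_t$, gives a positive union along which $\bm x\to\zeta$, whence $\Lambda_{\bm x}(\mathcal I)=F$ and $\Pi^0_1\subseteq\mathscr L(\mathcal I)$. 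For the remaining direction of (ii) and for (vi) I must convert the failure of $P^+$ into a genuinely missing limit: fix a decreasing $(A_m)$ in $\mathcal I^+$ without positive pseudo-intersection and, applying the hereditary Baire property to each $\mathcal I\res A_m$, carve out \emph{disjoint} positive $S_m\subseteq A_m$. For (ii), using that $X$ is nondiscrete metrizable pick distinct $\eta_m\to\eta$ and put $\bm x\equiv\eta_m$ on $S_m$ (and $\bm x\equiv\eta_1$ off $\bigcup_m S_m$): each $\eta_m$ is a limit point, yet any positive set along which $\bm x\to\eta$ would lie almost inside every $A_m$, a forbidden positive pseudo-intersection, so $\eta$ is absent and the limit set is not closed. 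For (vi), given $S=\bigcup_m F_m$ with $F_m$ closed, I run the (v)-construction for $F_m$ inside $A_m$, so that points of $S$ become limit points while any point of $\overline S\setminus S$ again forces a pseudo-intersection and is excluded, giving $\Lambda_{\bm x}(\mathcal I)=S$. Coordinating the disjoint block allocation coherently across the nested $(A_m)$ while steering convergence to each stratum $F_m$ is the step I expect to require the most care.
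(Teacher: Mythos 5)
Your proposal is correct in substance, but it is worth saying up front that the paper does not actually prove this theorem: its ``proof'' is a pointer to \cite[Proposition 1.4, Corollary 3.9, Theorem 4.5]{Xi} and \cite[Corollary 2.5, Theorems 2.4, 2.8, 2.10]{Xi2}, plus the easy \textsc{only if} of item (iv). What you have done is reconstruct, essentially faithfully, the arguments of those external references: the $P^+$ pseudo-intersection argument for (i), the decomposition $\Lambda_{\bm{x}}(\mathcal{I})=\bigcup_n\Lambda_n$ indexed by the Farah witnesses $K_n$ together with the compact-hereditary splicing for (iii), the disjoint-positive-sets argument for (iv), and the Talagrand block machinery (which the paper itself uses later, in Lemma \ref{lem:step1Gdeltasigma}) for (v), (vi) and the converse of (ii). All of these check out; in particular your use of compactness of $K_n$ to replace ``$S\notin K_n$'' by ``some finite subset of $S$ is outside $K_n$'' is exactly the right lever for (iii), and your pseudo-intersection contradiction correctly uses $B\subseteq^{*}\bigcup_{j\ge m}S_j\subseteq A_m$ from the decreasingness of $(A_m)$.

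The one step you flag as delicate --- producing pairwise disjoint sets $S_m\in\mathcal{I}^+$ with $S_m\subseteq A_m$ --- does genuinely need an argument, since the naive choices ($S_m\subseteq A_m\setminus A_{m+1}$, or blocks of a single Talagrand partition of $A_0$) fail. It can be done as follows: after discarding $\bigcap_m A_m$ (which lies in $\mathcal{I}$, else it would be a positive pseudo-intersection), apply the Talagrand--Jalali-Naini theorem to each restriction $\mathcal{I}\upharpoonright A_m$ (legitimate by the hereditary Baire property) to get a partition of $A_m$ into finite blocks such that every union of infinitely many blocks is $\mathcal{I}$-positive; since only finitely many blocks of any one partition meet a given initial segment $[0,N]$, you can interleave, choosing at stage $j$ a whole block of the partition of $A_{m(j)}$ lying entirely beyond everything chosen so far, with each index $m$ visited infinitely often. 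Setting $S_m$ to be the union of the blocks chosen for index $m$ gives disjoint sets, each a union of infinitely many blocks of the $m$-th partition, hence positive. With this supplied, your constructions for (ii) and (vi) go through verbatim (and for (vi) no metric is needed: $\zeta\notin F_m$ with $F_m$ closed already yields an open neighborhood of $\zeta$ missing $F_m$, forcing $B\cap S_m$ to be finite).
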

\begin{proof}
It follows by \cite[Proposition 1.4, Corollary 3.9, and Theorem 4.5]{Xi} and \cite[Corollary 2.5 and Theorems 2.4, 2.8, and 2.10]{Xi2}. (We added the \textsc{only if} part of item \ref{item:3xiB}, which is straighforward.)

% (note that in \cite[Theorem 2.10]{Xi2} the authors did not consider the case of empty sets). 
%
%\textcolor{red}{[Check whether point \ref{item:2xi} is  correct.]}
\end{proof}

In what follows, we divide our main results into four sections. First, we show in Section \ref{sec:large} that $\mathscr{L}(\mathcal{I})$ can be equal to families of arbitrarily high Borel complexity. 
%Second, we prove in Section \ref{sec:small} 
Also, we prove 
that we cannot have the equality $\mathscr{L}(\mathcal{I})=\Pi^0_2$: more precisely, $\mathscr{L}(\mathcal{I})\subseteq \Pi^0_2$ if and only if $\mathscr{L}(\mathcal{I})\subseteq \Pi^0_1$. 
Then, we show that, if $X$ is a first countable space and $\mathcal{I}$ is a simply coanalytic ideal (see Section \ref{sec:sanalytic} for details), then every $\Lambda_{\bm{x}}(\mathcal{I})$ is simply analytic. 
Lastly, we study structural and topological properties of \textquotedblleft smallest\textquotedblright\, ideals $\mathcal{I}$ such that $\mathscr{L}(\mathcal{I})$ contains a given subsets of $X$.

\begin{comment}
Putting the above results, we obtain: %together Theorem \ref{thm:newcharacterizationfsigma} and Theorem \ref{thm:xi}.\ref{item:2xi}, we obtain the following consequence: 
\begin{cor}
Let $X$ be a nondiscrete metrizable space where all closed subsets of $X$ are separable and $\mathcal{I}$ be a Borel ideal on $\omega$. Then the following are equivalent\textup{:} 
\begin{enumerate}[label={\rm (\roman{*})}]
\item \label{cor:1fsigma} $\mathscr{L}(\mathcal{I}) = \Pi^0_1$.
\item \label{cor:2fsigma} $\mathscr{L}(\mathcal{I}) \subseteq \Pi^0_1$.
\item \label{cor:3fsigma} $\mathcal{I}$ is a $P^+$-ideal.
\item \label{cor:4fsigma} $\mathcal{I}$ is a $\Sigma^0_2$ ideal. 
\end{enumerate}
In particular, there are no Borel ideals which are $P^+$-ideals but not $F_\sigma$.
\end{cor}
\begin{proof}
\ref{cor:1fsigma} $\implies$ \ref{cor:2fsigma} It is obvious. 

\ref{cor:2fsigma} $\Longleftrightarrow$ \ref{cor:3fsigma} It follows by Theorem \ref{thm:xi}.\ref{item:2xi}.

\ref{cor:2fsigma} $\Longleftrightarrow$ \ref{cor:4fsigma} It follows by Theorem \ref{thm:newcharacterizationfsigma}. 

\ref{cor:4fsigma} $\implies$ \ref{cor:1fsigma} It follows by Theorem \ref{thm:oldthm}.\ref{old:prelim3}.
\end{proof}
\end{comment}

%\section{Main results} 

\section{Large and small Borel complexities} \label{sec:large}
Our first main result 
computes explicitly some families $\mathscr{L}(\mathcal{I})$, proving that they may attain arbitrarily large Borel complexity (Theorem \ref{thm:highBorel} below). This is somehow related to \cite[Question 3.11]{Xi}, which asks about the existence of a Borel ideal $\mathcal{I}$ such that $\mathscr{L}(\mathcal{I})$ contains sets with large Borel complexities.

For, 
%allows to compute the set of ideal limit points of a given sequence, provided that the ideal has a certain structure. More precisely, 
given (possibly nonadmissible) ideals $\mathcal{I}$ and $\mathcal{J}$ on two countably infinite sets $Z$ and $W$, respectively, we define their \emph{Fubini product} by 
$$
\mathcal{I}\times \mathcal{J}:=\{S\subseteq Z\times W\colon \{n \in Z: \{k\in W: (n,k)\in S\}\notin \mathcal{J}\}\in \mathcal{I}\}\},
$$
which is an ideal on the countably infinite set $Z\times W$, see e.g. \cite[Chapter 1]{MR1711328}. Hence, 
%$\mathrm{Fin}^2:=\mathrm{Fin}\times \mathrm{Fin}$ and 
recursively, 
$\mathrm{Fin}^{\alpha}:=\mathrm{Fin}\times \mathrm{Fin}^{\alpha-1}$ for all integers $\alpha\ge 2$ is an ideal on $\omega^\alpha$. 

\begin{thm}\label{thm:baseinductionemptysetGENERAL}
Let $\mathcal{I}, \mathcal{J}$ be \textup{(}possibly nonadmissible\textup{)} ideals on $\omega$ such that $\mathcal{I}\times \mathcal{J}$ is an admissible ideal on $\omega^2$.
Fix also a bijection $h: \omega^2\to \omega$ and let $\bm{x}=(x_{n})$ be a sequence with values in a first countable space $X$. 
For each $n \in \omega$, define the sequence $\bm{x}^{(n)}=(x^{(n)}_k: k \in \omega)$ by 
$x^{(n)}_k:=x_{h(n,k)}$ for all $k \in \omega$. 
Then 
\begin{equation}\label{eq:claimedequalityeee}
\Lambda_{\bm{x}}(h[\,\mathcal{I} \times \mathcal{J}])=\left\{\eta \in X: \left\{n \in \omega: \eta \in \Lambda_{\bm{x}^{(n)}}(\mathcal{J})\right\} \notin \mathcal{I}\right\},
\end{equation}
 where $h[\,\mathcal{I} \times \mathcal{J}]$ stands for the family $\{h[S]: S \in \mathcal{I} \times \mathcal{J}\}$.
\end{thm}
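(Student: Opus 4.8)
The plan is to prove the two inclusions of the claimed identity separately, after reformulating everything in terms of the pushforward ideal $\mathcal{K}:=h[\,\mathcal{I}\times\mathcal{J}]$ on $\omega$ and the Fubini positivity criterion. Writing $S_n:=\{k\in\omega:(n,k)\in S\}$ for the $n$-th vertical section of a set $S\subseteq\omega^2$, the definition of the Fubini product gives $S\in(\mathcal{I}\times\mathcal{J})^+$ if and only if $\{n:S_n\in\mathcal{J}^+\}\in\mathcal{I}^+$; and since $h$ is a bijection, $T\in\mathcal{K}^+$ if and only if $h^{-1}[T]\in(\mathcal{I}\times\mathcal{J})^+$. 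I would also record the first countable characterization of limit points: fixing a decreasing neighborhood base $(U_m)_{m\in\omega}$ at a given $\eta$, one has $\eta\in\Lambda_{\bm{y}}(\mathcal{L})$ exactly when there is an infinite $A\in\mathcal{L}^+$ along which $\bm{y}$ converges to $\eta$, i.e.\ $\{k\in A:y_k\notin U_m\}$ is finite for every $m$. The identity $x^{(n)}_k=x_{h(n,k)}$ is what lets me translate between sections of $S$ and the auxiliary sequences $\bm{x}^{(n)}$.

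For the inclusion $\subseteq$, I would start from $\eta\in\Lambda_{\bm{x}}(\mathcal{K})$, take an infinite witness $T\in\mathcal{K}^+$ with $x_j\to\eta$ along $T$, and set $S:=h^{-1}[T]$, so that $B:=\{n:S_n\in\mathcal{J}^+\}\in\mathcal{I}^+$. The point is that for each $n\in B$ the (infinite) $\mathcal{J}$-positive section $S_n$ works as a witness for $\eta$: since $\{h(n,k):k\in S_n\}\subseteq T$ and $h$ is injective, convergence of $\bm{x}$ along $T$ forces $\{k\in S_n:x^{(n)}_k\notin U_m\}$ to be finite for every $m$, whence $\eta\in\Lambda_{\bm{x}^{(n)}}(\mathcal{J})$. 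Thus $B\subseteq\{n:\eta\in\Lambda_{\bm{x}^{(n)}}(\mathcal{J})\}$, and the upward closure of $\mathcal{I}^+$ gives that the latter set is $\mathcal{I}$-positive, i.e.\ $\eta$ lies in the right-hand side.

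For the converse $\supseteq$, suppose $C:=\{n:\eta\in\Lambda_{\bm{x}^{(n)}}(\mathcal{J})\}\in\mathcal{I}^+$ and choose, for each $n\in C$, an infinite $A_n\in\mathcal{J}^+$ along which $\bm{x}^{(n)}$ converges to $\eta$. The naive candidate $S:=\bigcup_{n\in C}\{n\}\times A_n$ does satisfy $\{n:S_n\in\mathcal{J}^+\}\supseteq C$, hence lies in $(\mathcal{I}\times\mathcal{J})^+$ and $h[S]\in\mathcal{K}^+$; the difficulty, and the main obstacle of the whole proof, is that $\bm{x}$ need not converge to $\eta$ along $h[S]$, because indices with $x^{(n)}_k\notin U_m$ may accumulate across infinitely many distinct sections. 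To repair this I would run a diagonal argument: enumerate $C=\{c_0<c_1<\cdots\}$ and replace $A_{c_i}$ by its cofinite subset $S_{c_i}:=\{k\in A_{c_i}:x^{(c_i)}_k\in U_i\}$, discarding finitely many indices in each section. Then for every fixed $m$ only the sections with $i<m$ can contribute indices with $x^{(n)}_k\notin U_m$ (for $i\ge m$ one has $U_i\subseteq U_m$, so $S_{c_i}$ contributes none), and each of those finitely many sections contributes only finitely many such indices; hence $\{j\in h[S]:x_j\notin U_m\}$ is finite and $\bm{x}$ converges to $\eta$ along $h[S]$.

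It then remains to verify that the trimming preserves positivity and that the sections used throughout are genuinely infinite. Since each $S_{c_i}$ is cofinite in $A_{c_i}$, the set $\{n:S_n\in\mathcal{J}^+\}$ still contains $C$, so $h[S]\in\mathcal{K}^+$ and $\eta\in\Lambda_{\bm{x}}(\mathcal{K})$, completing the inclusion. I expect the diagonalization to be the genuinely substantive point; by contrast, both this last step and the $\subseteq$ part rely on two bookkeeping facts about $\mathcal{J}$ — that a positive witness is infinite, and that $\mathcal{J}$-positivity survives deletion of a finite set. These are exactly the places where the possible non-admissibility of the factors must be tracked, and they are what I would check most carefully from the constraints that admissibility of $\mathcal{I}\times\mathcal{J}$ imposes on how $\mathcal{I}$ and $\mathcal{J}$ behave on finite sets.
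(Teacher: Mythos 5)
Your argument is essentially the paper's own proof. The forward inclusion is identical: pull the witness back through $h$, apply the Fubini positivity criterion to get $\mathcal{I}$-positively many $\mathcal{J}$-positive sections, and observe that each such section already witnesses $\eta\in\Lambda_{\bm{x}^{(n)}}(\mathcal{J})$. Your reverse inclusion is also the paper's: the paper performs exactly your diagonal trimming, discarding finitely many indices from the $n$-th section so that it lands inside $U_n$ (the paper indexes the neighborhoods directly by $n\in N$ rather than by the position in an enumeration of $N$, an immaterial difference), and then notes that for each $m$ only finitely many terms of the resulting subsequence lie outside $U_m$. The two ``bookkeeping facts'' you single out at the end --- that a $\mathcal{J}$-positive witnessing section is infinite and that $\mathcal{J}$-positivity survives deletion of a finite set --- are indeed the only delicate points, and you are right to be suspicious: they follow from $\mathrm{Fin}\subseteq\mathcal{J}$, but admissibility of $\mathcal{I}\times\mathcal{J}$ by itself only forces $\mathrm{Fin}$ to be contained in $\mathcal{I}$ \emph{or} in $\mathcal{J}$ (if some finite $F\notin\mathcal{J}$, then each $\{n\}\times F$ being in the product puts every singleton in $\mathcal{I}$), so your assertion that ``the set $\{n:S_n\in\mathcal{J}^+\}$ still contains $C$'' is not fully discharged in the remaining case. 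The paper's proof makes the identical silent assumption at the words ``upon removing finitely many elements, we can suppose without loss of generality,'' so this is not a gap relative to the paper, and it is vacuous in all of the paper's applications, where $\mathcal{J}$ is admissible.
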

\begin{proof}%[Proof of Theorem \ref{thm:baseinductionemptysetGENERAL}]
Let $A$ and $B$ be the left and right hand side of \eqref{eq:claimedequalityeee}, respectively. 

\medskip

\textsc{Inclusion $A\subseteq B$.} The inclusion is clear if $A=\emptyset$. Otherwise fix a point $\eta \in A$. 
Hence there exists a set $S\subseteq \omega^2$ such that $S\notin \mathcal{I}\times \mathcal{J}$ and the subsequence $(x_{h(s)}: s \in S)$ is convergent to $\eta$. 
By the definition of Fubini product $\mathcal{I}\times \mathcal{J}$, 
%we obtain that 
%$$
%N:=\{n \in \omega: \{k\in \omega: (n,k) \in S\}\in \mathcal{J}^+\} \in \mathcal{I}^+.
%$$
$$
N:=\{n \in \omega: K_n\in \mathcal{J}^+\} \in \mathcal{I}^+, 
\,\, \text{ where }\,\,
K_n:=\{k\in \omega: (n,k) \in S\}.
$$
At this point, for each $n \in N$, the subsequence $(x^{(n)}_{k}: k\in K_n)$ is convergent to $\eta$ and $K_n \in \mathcal{J}^+$. Since $N \in \mathcal{I}^+$, we obtain that $\eta \in B$. 

\medskip

\textsc{Inclusion $B\subseteq A$.} 
The inclusion is clear if $B=\emptyset$. Otherwise fix a point $\eta \in B$ and let $(U_n)$ be a decreasing local base of neighborhoods at $\eta$. Hence there exists $N \in \mathcal{I}^+$ such that $\eta$ is a $\mathcal{J}$-limit point of $\bm{x}^{(n)}$ for each $n \in N$, let us say $\lim_{k \in K_n}x^{(n)}_k=\eta$ for some $K_n \in \mathcal{J}^+$. Upon removing finitely many elements, we can suppose without loss of generality that 
$$
\forall n \in N, \forall k \in K_n, \quad 
x^{(n)}_k \in U_n.
$$
Now, set $S:=\{(n,k)\in \omega^2: n \in N, k \in K_n\}$ and note that $S\notin \mathcal{I}\times \mathcal{J}$. 
It follows that the subsequence $(x_{h(s)}: s \in S)$ is convergent to $\eta$: indeed there are only finitely many elements of the subsequence outside each $U_n$. Therefore $\eta \in A$, which concludes the proof. 
\end{proof}
\begin{rmk}\label{rmk:notnecessarilyfirstcountablee}
The above result holds for every topological space $X$ if $\mathcal{I}=\{\emptyset\}$. Indeed, in the second part of the proof it is enough to let $N$ be a singleton.
\end{rmk}

It is worth noting that if $\mathcal{I}$ is an ideal on a countably infinite set $Z$, $\phi: Z\to \omega$ is a bijection, and $\bm{x}\in X^Z$ is a $Z$-indexed sequence with values in a topological space $X$, then $\phi[\,\mathcal{I}]:=\{\phi[S]: S \in \mathcal{I}\}$ is an ideal on $\omega$ and 
$
\Lambda_{\bm{x}}(\mathcal{I})=\Lambda_{\bm{y}}(\phi[\,\mathcal{I}]), 
$  
where $\bm{y} \in X^\omega$ is the sequence defined by $y_n:=x_{\phi^{-1}(n)}$ for all $n \in \omega$. Hence 
%$$
%x_{z_i}\to \eta 
%iff
%y_{h(z_i)} \to \eta
%$$
we may use interchangeably $\mathscr{L}(\mathcal{I})$ or $\mathscr{L}(\phi[\,\mathcal{I}])$. 
In particular, Theorem \ref{thm:baseinductionemptysetGENERAL} allows to compute explicitly families of the type $\mathscr{L}(\mathcal{I}\times \mathcal{J})$. 
For, we state two consequence of Theorem \ref{thm:baseinductionemptysetGENERAL}: 

\begin{cor}\label{cor:emptysetfubini}
Let $X$ be a topological space and $\mathcal{I}$ be an ideal on $\omega$. Then 
$$
\mathscr{L}(\emptyset \times \mathcal{I})=\left\{\bigcup\nolimits_nA_n: A_0,A_1,\ldots \in \mathscr{L}(\mathcal{I})\right\}.
$$
\end{cor}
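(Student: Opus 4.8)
The plan is to deduce the statement directly from Theorem~\ref{thm:baseinductionemptysetGENERAL}, specialized to the case where the first ideal there is the trivial (nonadmissible) ideal $\{\emptyset\}$ — which is what the symbol $\emptyset$ abbreviates in the corollary — and the second ideal is the ideal $\mathcal{I}$ of the corollary. First I would verify the hypotheses: $\{\emptyset\}$ is indeed a (nonadmissible) ideal on $\omega$, and the Fubini product $\{\emptyset\}\times\mathcal{I}$ is admissible on $\omega^2$, since every finite subset of $\omega^2$ has all of its vertical sections finite, hence in $\mathcal{I}$, so that $\mathrm{Fin}\subseteq \{\emptyset\}\times\mathcal{I}$, whereas $\omega^2$ has the section $\omega\notin\mathcal{I}$, so that $\omega^2\notin \{\emptyset\}\times\mathcal{I}$. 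Fixing a bijection $h\colon\omega^2\to\omega$ and recalling from the discussion following the theorem that $\mathscr{L}(\emptyset\times\mathcal{I})$ may be computed as $\mathscr{L}(h[\,\emptyset\times\mathcal{I}])$, I would then read off the right-hand side of \eqref{eq:claimedequalityeee}: the condition $\{n: \eta\in\Lambda_{\bm{x}^{(n)}}(\mathcal{I})\}\notin\{\emptyset\}$ means exactly that this set is nonempty, so the identity becomes
$$
\Lambda_{\bm{x}}(h[\,\emptyset\times\mathcal{I}])=\bigcup\nolimits_n \Lambda_{\bm{x}^{(n)}}(\mathcal{I})
$$
for every $\bm{x}\in X^\omega$, where $\bm{x}^{(n)}=(x_{h(n,k)}\colon k\in\omega)$ as in the statement of the theorem.

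For the inclusion $\subseteq$, I would take an arbitrary $\bm{x}\in X^\omega$ and note that, by the displayed identity, $\Lambda_{\bm{x}}(h[\,\emptyset\times\mathcal{I}])$ is a countable union of the sets $\Lambda_{\bm{x}^{(n)}}(\mathcal{I})$, each of which lies in $\mathscr{L}(\mathcal{I})$; the remaining element $\emptyset$ of $\mathscr{L}(\emptyset\times\mathcal{I})$ is also of the required form, since $\emptyset\in\mathscr{L}(\mathcal{I})$ gives $\emptyset=\bigcup_n\emptyset$.

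For the reverse inclusion, I would start from sets $A_0,A_1,\dots\in\mathscr{L}(\mathcal{I})$ and realize $\bigcup_n A_n$ as some $\Lambda_{\bm{x}}(h[\,\emptyset\times\mathcal{I}])$ by interleaving witnessing sequences: if $A_n=\Lambda_{\bm{y}^{(n)}}(\mathcal{I})$ for sequences $\bm{y}^{(n)}\in X^\omega$, then setting $x_{h(n,k)}:=y^{(n)}_k$ yields $\bm{x}^{(n)}=\bm{y}^{(n)}$, whence $\Lambda_{\bm{x}}(h[\,\emptyset\times\mathcal{I}])=\bigcup_n A_n$ by the identity above. The only real obstacle is that a set $A_n\in\mathscr{L}(\mathcal{I})$ equal to $\emptyset$ need not be of the form $\Lambda_{\bm{y}}(\mathcal{I})$ for an actual sequence $\bm{y}$ (for instance, when $X$ is compact no sequence may have an empty set of limit points), so such an $A_n$ may fail to supply a witnessing sequence. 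I would dispose of this as follows: empty sets contribute nothing to the union, so if every $A_n$ is empty then $\bigcup_n A_n=\emptyset\in\mathscr{L}(\emptyset\times\mathcal{I})$ by definition, and is done; otherwise I would fix some nonempty $A_{n_0}$ and replace every empty $A_n$ by $A_{n_0}$, which leaves $\bigcup_n A_n$ unchanged while ensuring that each $A_n$ is genuinely witnessed by a sequence, thereby completing the construction.
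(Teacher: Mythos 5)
Your argument follows the same route as the paper's: specialize Theorem~\ref{thm:baseinductionemptysetGENERAL} to the outer ideal $\{\emptyset\}$, read off the identity $\Lambda_{\bm{x}}(h[\,\emptyset\times\mathcal{I}])=\bigcup_n\Lambda_{\bm{x}^{(n)}}(\mathcal{I})$, and pass between a single interleaved sequence and a countable family of witnessing sequences via the bijection $h$. Your treatment of the empty set is in fact more careful than the paper's one-line chain of equalities, which silently ignores the extra $\cup\,\{\emptyset\}$ in Definition~\ref{defi:setIlimitpoints}; replacing the empty $A_n$'s by a fixed nonempty $A_{n_0}$ is exactly the right patch, and the admissibility check for $\{\emptyset\}\times\mathcal{I}$ is correct.

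The one step you have not discharged is a hypothesis mismatch: Theorem~\ref{thm:baseinductionemptysetGENERAL} is stated for first countable $X$, whereas the corollary (and your proof) claims the result for an arbitrary topological space. First countability enters only in the inclusion $B\subseteq A$ of the theorem's proof, where a decreasing local base at $\eta$ is used to glue the sets $K_n$, $n\in N$, into a single $\mathcal{I}\times\mathcal{J}$-positive set along which the subsequence converges. When the outer ideal is $\{\emptyset\}$, positivity of $N$ means only that $N$ is nonempty, so one may take $N$ to be a singleton and no local base is needed; this is precisely Remark~\ref{rmk:notnecessarilyfirstcountablee}, which the paper cites alongside the theorem and which you must invoke (or reprove) for your application to be legitimate beyond first countable spaces.
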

\begin{proof}
It follows by Theorem \ref{thm:baseinductionemptysetGENERAL} and Remark \ref{rmk:notnecessarilyfirstcountablee} that 
\begin{displaymath}
\begin{split}
\mathscr{L}(\emptyset \times \mathcal{I})
&=\left\{\Lambda_{\bm{x}}(\emptyset \times \mathcal{I}): {\bm{x}} \in X^\omega\right\}\\
&=\left\{\bigcup\nolimits_{n}\Lambda_{{\bm{x}}^{(n)}}(\mathcal{I}): {\bm{x}} \in X^\omega\right\}\\
&=\left\{\bigcup\nolimits_{n}\Lambda_{{\bm{x}}^{(n)}}(\mathcal{I}): {\bm{x}}^{(0)}, {\bm{x}}^{(1)}, \ldots \in X^\omega\right\}\\
&=\left\{\bigcup\nolimits_nA_n: A_0,A_1,\ldots \in \mathscr{L}(\mathcal{I})\right\},
\end{split}
\end{displaymath}
completing the proof. 
\end{proof}

\begin{cor}\label{cor:emptysetfubini2}
Let $X$ be a first countable space and $\mathcal{I}$ be an ideal on $\omega$. Then 
$$
\mathscr{L}(\mathrm{Fin}\times \mathcal{I})=\left\{\limsup\nolimits_nA_n: A_0,A_1,\ldots \in \mathscr{L}(\mathcal{I})\right\}.
$$
\end{cor}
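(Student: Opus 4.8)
The plan is to apply Theorem~\ref{thm:baseinductionemptysetGENERAL} with $\mathcal{I}$ replaced by the (nonadmissible) ideal $\mathrm{Fin}$, and to identify the resulting right-hand side of \eqref{eq:claimedequalityeee} with a $\limsup$ of sets. Concretely, writing $\mathscr{L}(\mathrm{Fin}\times \mathcal{I})=\{\Lambda_{\bm{x}}(\mathrm{Fin}\times \mathcal{I}):\bm{x}\in X^\omega\}$ and using the decomposition of each $\bm{x}$ into the subsequences $\bm{x}^{(n)}$, Theorem~\ref{thm:baseinductionemptysetGENERAL} gives
\begin{equation*}
\Lambda_{\bm{x}}(\mathrm{Fin}\times \mathcal{I})=\left\{\eta\in X:\left\{n\in\omega:\eta\in\Lambda_{\bm{x}^{(n)}}(\mathcal{I})\right\}\notin\mathrm{Fin}\right\}.
\end{equation*}
So the first step is simply to record that, because $\mathrm{Fin}^+$ consists exactly of the infinite subsets of $\omega$, the condition ``$\{n:\eta\in A_n\}\notin\mathrm{Fin}$'' means that $\eta$ belongs to infinitely many of the sets $A_n:=\Lambda_{\bm{x}^{(n)}}(\mathcal{I})$.

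The key step is then the elementary set-theoretic identity $\limsup_n A_n=\{\eta:\eta\in A_n \text{ for infinitely many }n\}$, which is the standard definition of $\limsup_n A_n=\bigcap_m\bigcup_{n\ge m}A_n$. I would invoke this to rewrite the displayed right-hand side as $\limsup_n A_n$, obtaining $\Lambda_{\bm{x}}(\mathrm{Fin}\times \mathcal{I})=\limsup_n A_n$ with each $A_n\in\mathscr{L}(\mathcal{I})$. This handles one inclusion of the claimed equality once we range over all $\bm{x}$: every element of $\mathscr{L}(\mathrm{Fin}\times \mathcal{I})$ is of the form $\limsup_n A_n$ with $A_n\in\mathscr{L}(\mathcal{I})$.

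For the reverse inclusion, the plan is to mimic the last step of the proof of Corollary~\ref{cor:emptysetfubini}: given arbitrary sets $A_0,A_1,\ldots\in\mathscr{L}(\mathcal{I})$, choose for each $n$ a sequence $\bm{x}^{(n)}\in X^\omega$ with $\Lambda_{\bm{x}^{(n)}}(\mathcal{I})=A_n$, and assemble them into a single sequence $\bm{x}\in X^\omega$ via the fixed bijection $h$, so that $\bm{x}^{(n)}$ is recovered as the $n$-th section. Applying the displayed formula to this $\bm{x}$ yields $\Lambda_{\bm{x}}(\mathrm{Fin}\times \mathcal{I})=\limsup_n A_n$, showing the arbitrary $\limsup$ lies in $\mathscr{L}(\mathrm{Fin}\times \mathcal{I})$. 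One should also note the boundary case where the $\limsup$ is empty: this is covered by the convention $\{\emptyset\}\subseteq\mathscr{L}(\mathcal{I})$ in Definition~\ref{defi:setIlimitpoints}, and the argument goes through verbatim.

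The main obstacle, such as it is, lies in verifying that the hypotheses of Theorem~\ref{thm:baseinductionemptysetGENERAL} are genuinely met when we take the first factor to be $\mathrm{Fin}$: we must confirm that $\mathrm{Fin}\times\mathcal{I}$ is an \emph{admissible} ideal on $\omega^2$ so that the theorem applies, and that the first countability of $X$ is used correctly (it is needed for the $B\subseteq A$ direction of Theorem~\ref{thm:baseinductionemptysetGENERAL}, whence the hypothesis appears in the statement of this corollary but not in Corollary~\ref{cor:emptysetfubini}). Apart from this bookkeeping, the proof is a direct transcription of the template already established for $\emptyset\times\mathcal{I}$, with ``arbitrary union'' replaced by ``$\limsup$'' and ``nonempty section set'' replaced by ``infinite section set.''
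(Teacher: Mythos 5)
Your proposal is correct and follows essentially the same route as the paper: apply Theorem~\ref{thm:baseinductionemptysetGENERAL} with first factor $\mathrm{Fin}$, observe that $\mathrm{Fin}^+$ is the family of infinite sets so the right-hand side of \eqref{eq:claimedequalityeee} becomes $\{\eta:\eta\in\Lambda_{\bm{x}^{(n)}}(\mathcal{I})\text{ for infinitely many }n\}=\limsup_n\Lambda_{\bm{x}^{(n)}}(\mathcal{I})$, and then range over all $\bm{x}$ (equivalently, over all choices of the sections $\bm{x}^{(n)}$) to get both inclusions. Your extra remarks on admissibility of $\mathrm{Fin}\times\mathcal{I}$ and on the empty-set convention are sound bookkeeping that the paper leaves implicit.
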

\begin{proof}
It follows by Theorem \ref{thm:baseinductionemptysetGENERAL} that 
\begin{displaymath}
\begin{split}
\mathscr{L}(\mathrm{Fin} \times \mathcal{I})
&=\left\{\Lambda_{\bm{x}}(\mathrm{Fin} \times \mathcal{I}): {\bm{x}} \in X^\omega\right\}\\
&=\left\{\left\{\eta \in X: \exists^\infty n \in \omega, \eta \in \Lambda_{{\bm{x}}^{(n)}}(\mathcal{I})\right\}: {\bm{x}} \in X^\omega\right\}\\
&=\left\{\bigcap\nolimits_n\bigcup\nolimits_{k\ge n}\Lambda_{{\bm{x}}^{(n)}}(\mathcal{I}): {\bm{x}}^{(0)}, {\bm{x}}^{(1)}, \ldots \in X^\omega\right\}\\
&=\left\{\limsup\nolimits_nA_n: A_0,A_1,\ldots \in \mathscr{L}(\mathcal{I})\right\},
\end{split}
\end{displaymath}
completing the proof. 
\end{proof}

At this point, recall that $\mathrm{Fin}$ is a $F_\sigma$-ideal, and that $\emptyset\times \mathrm{Fin}$ is an analytic $P$-ideal which is not $F_\sigma$, see \cite[Example 1.2.3]{MR1711328} and \cite[Exercise 23.1]{K}. Thus, it follows by Theorem \ref{thm:oldthm} that
\begin{equation}\label{eq:basicrelationshipL}
\mathscr{L}(\mathrm{Fin})= \Pi^0_{1}
\quad \text{ and }\quad 
\mathscr{L}(\emptyset \times \mathrm{Fin})= \Sigma^0_{2}.
\end{equation}

With the above premises, we are able to extend 
\eqref{eq:basicrelationshipL} 
%the above equalities 
to certain ideals with large Borel complexity families $\mathscr{L}(\mathcal{I})$. 
\begin{thm}\label{thm:highBorel}
Let $X$ be a complete metrizable space. Then, for each positive integer $\alpha$, we have\textup{:} 
 
\begin{enumerate}[label={\rm (\roman{*})}]
\item \label{item:1mainlimitfamilies} $\mathrm{Fin}^\alpha$ is a $\Sigma^0_{2\alpha}$-ideal and $\mathscr{L}(\mathrm{Fin}^\alpha)= \Pi^0_{2\alpha-1}$.
\item \label{item:2mainlimitfamilies} $\emptyset \times \mathrm{Fin}^\alpha$ is a $\Pi^0_{2\alpha+1}$-ideal and $\mathscr{L}(\emptyset \times \mathrm{Fin}^\alpha)= \Sigma^0_{2\alpha}$. 
\end{enumerate}
%$$
%\mathscr{L}(\mathrm{Fin}^\alpha)= \Pi^0_{2\alpha-1}
%\quad \text{ and }\quad 
%\mathscr{L}(\emptyset \times \mathrm{Fin}^\alpha)= \Sigma^0_{2\alpha}
%$$
%for all integers $\alpha \ge 1$. 
\end{thm}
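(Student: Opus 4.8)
The plan is to prove both items simultaneously by induction on $\alpha$, using Corollaries \ref{cor:emptysetfubini} and \ref{cor:emptysetfubini2} to pass from complexity level to complexity level, with the base case $\alpha=1$ supplied by \eqref{eq:basicrelationshipL}. The two statements are interlocked: the family $\mathscr{L}(\mathrm{Fin}^\alpha)$ feeds into the computation of $\mathscr{L}(\emptyset\times\mathrm{Fin}^\alpha)$, and the latter family (at level $\alpha$) feeds into $\mathscr{L}(\mathrm{Fin}^{\alpha+1})$, so I would carry them together through a single induction. Concretely, for the inductive step I would first establish item \ref{item:2mainlimitfamilies} at level $\alpha$ from item \ref{item:1mainlimitfamilies} at level $\alpha$, and then establish item \ref{item:1mainlimitfamilies} at level $\alpha+1$ from item \ref{item:2mainlimitfamilies} at level $\alpha$, since $\mathrm{Fin}^{\alpha+1}=\mathrm{Fin}\times\mathrm{Fin}^\alpha$ while $\emptyset\times\mathrm{Fin}^\alpha$ involves the same underlying product structure.

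The two key passages are purely topological once the set-theoretic identities are in hand. From Corollary \ref{cor:emptysetfubini} we have $\mathscr{L}(\emptyset\times\mathrm{Fin}^\alpha)=\{\bigcup_n A_n: A_0,A_1,\ldots\in\mathscr{L}(\mathrm{Fin}^\alpha)\}$, so assuming inductively that $\mathscr{L}(\mathrm{Fin}^\alpha)=\Pi^0_{2\alpha-1}$ I would argue that the family of countable unions of $\Pi^0_{2\alpha-1}$ sets is exactly $\Sigma^0_{2\alpha}$; here the nontrivial direction is that every $\Sigma^0_{2\alpha}$ set is a countable union of sets from $\mathscr{L}(\mathrm{Fin}^\alpha)$, which follows because $\Sigma^0_{2\alpha}$ is by definition the class of countable unions of $\Pi^0_{2\alpha-1}$ sets and the empty set lies in every $\mathscr{L}(\cdot)$. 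Dually, from Corollary \ref{cor:emptysetfubini2} I would compute $\mathscr{L}(\mathrm{Fin}^{\alpha+1})=\mathscr{L}(\mathrm{Fin}\times\mathrm{Fin}^\alpha)=\{\limsup_n A_n: A_n\in\mathscr{L}(\mathrm{Fin}^\alpha)\}$, and then identify the class of all $\limsup_n A_n$ of $\Pi^0_{2\alpha-1}$ sets with $\Pi^0_{2\alpha+1}$. Since $\limsup_n A_n=\bigcap_m\bigcup_{n\ge m}A_n$, a $\limsup$ of $\Pi^0_{2\alpha-1}$ sets is automatically $\Pi^0_{2\alpha+1}$; for the reverse inclusion I would represent an arbitrary $\Pi^0_{2\alpha+1}$ set as $\bigcap_m B_m$ with each $B_m\in\Sigma^0_{2\alpha}$ a countable union $\bigcup_j C_{m,j}$ of $\Pi^0_{2\alpha-1}$ sets, and then arrange these building blocks into a doubly-indexed sequence whose $\limsup$ recovers the target, exploiting the freedom to repeat sets and to interleave indices.

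For the complexity claims about the ideals themselves, I would note that $\mathrm{Fin}$ is $\Sigma^0_2$ and $\emptyset\times\mathrm{Fin}$ is $\Pi^0_3$, and track how the Fubini product raises complexity: forming $\mathcal{I}\times\mathcal{J}$ replaces membership by a statement of the form ``for $\mathcal{I}$-almost all $n$, the section is in $\mathcal{J}$,'' which under the usual definition adds two quantifier alternations and hence raises the pointclass by two levels, so that $\mathrm{Fin}^\alpha$ lands in $\Sigma^0_{2\alpha}$ and $\emptyset\times\mathrm{Fin}^\alpha$ in $\Pi^0_{2\alpha+1}$. The main obstacle I anticipate is the reverse inclusion in the $\limsup$ computation, namely showing every $\Pi^0_{2\alpha+1}$ set arises as a $\limsup$ of $\Pi^0_{2\alpha-1}$ sets: this requires a careful combinatorial rearrangement of the double union-intersection representation into a single sequence, and one must verify that points outside the target set fail to appear in infinitely many terms, which is where the bookkeeping is delicate. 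The completeness of $X$ enters only to guarantee that the $\Sigma^0_2$-ideal calculations and the identification $\mathscr{L}(\mathrm{Fin})=\Pi^0_1$ via Theorem \ref{thm:oldthm} apply with their full strength.
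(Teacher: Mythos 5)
Your overall architecture is the same as the paper's: induction on $\alpha$ anchored at \eqref{eq:basicrelationshipL}, with Corollary \ref{cor:emptysetfubini} giving the passage to $\emptyset\times\mathrm{Fin}^\alpha$ and Corollary \ref{cor:emptysetfubini2} the passage to $\mathrm{Fin}^{\alpha+1}$, and the union step (countable unions of $\Pi^0_{2\alpha-1}$ sets are exactly $\Sigma^0_{2\alpha}$) is indeed immediate. The gap is in the $\limsup$ step. The paper does not prove the identification $\{\limsup_n A_n: A_n\in\Pi^0_{2\alpha-1}\}=\Pi^0_{2\alpha+1}$; it invokes it as a known, nontrivial fact via \cite[Exercise 23.5(i)]{K}. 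Your plan for the hard inclusion --- write $A=\bigcap_m B_m$ with $B_m=\bigcup_j C_{m,j}$ and ``arrange these building blocks into a doubly-indexed sequence whose limsup recovers the target'' --- cannot succeed with the blocks $C_{m,j}$ taken from such a representation, no matter how you order, repeat, or interleave them. Take $x\notin A$ with least failing index $m_0\ge 1$. If the $C_{0,j}$ are increasing in $j$ (the standard normalization), then $x$ lies in cofinitely many $C_{0,j}$, hence in infinitely many terms of any sequence enumerating all blocks, so $x\in\limsup_n D_n\setminus A$. If instead you retain only finitely many blocks per level $m$ (so that level $0$ cannot capture such an $x$ infinitely often), the retained blocks at level $m$ are contained in a single $C_{m,j(m)}$, and for the $\Pi^0_3$-complete set ``every row of $x\in 2^{\omega\times\omega}$ is finite'' there is, for any choice of $j(\cdot)$, a point of $A$ whose least witnesses exceed $j(m)$ at every level $m$; that point then lies in no retained block at all. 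The same diagonalization defeats variants using finite intersections $\bigcap_{i\le m}C_{i,j_i}$ with finitely many tuples per level.

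What the lemma actually requires is to replace the given blocks by new $\Pi^0_{2\alpha-1}$ sets adapted to $A$ itself: representations $B_m=\bigcup_j D_{m,j}$ that are point-finite at every point of $B_m\setminus A$ (equivalently, $\limsup_j D_{m,j}\subseteq A$); once these exist, the plain enumeration of all $D_{m,j}$ does have $\limsup$ equal to $A$, since a point of $A$ is caught at every level while a point outside $A$ is caught finitely often at each of the finitely many levels it survives. Producing such representations is a genuine construction, not bookkeeping: already for $\alpha=1$ and $A=[0,1]\setminus\mathbb{Q}$ one needs something like ``swiss-cheese'' closed sets $[0,1]\setminus\bigcup_{i\le n}B(q_i,\delta_{n,i})$ built from a well-separated enumeration of the rationals with carefully tuned radii, which is essentially the content of \cite[Example 4.2]{BL}. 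So either supply this point-finiteness lemma or do as the paper does and cite \cite[Exercise 23.5(i)]{K}. The remaining ingredients of your proposal --- the union identity, and the complexity bounds for the ideals themselves via the quantifier count for the Fubini product, which the paper obtains from \cite[Proposition 1.6.16]{Mezathesis} --- match the paper and are fine.
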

\begin{proof}
\ref{item:1mainlimitfamilies} 
%Fix an integer $\alpha \ge 1$. 
The complexity of $\mathrm{Fin}^\alpha$ is obtained applying recursively \cite[Proposition 1.6.16]{Mezathesis}, while the computation of $\mathscr{L}(\mathrm{Fin}^\alpha)$ is obtained putting together the base case \eqref{eq:basicrelationshipL}, Corollary \ref{cor:emptysetfubini2}, and \cite[Exercise 23.5(i)]{K}. 

The proof of \ref{item:2mainlimitfamilies} goes similarly, replacing Corollary \ref{cor:emptysetfubini2} with Corollary \ref{cor:emptysetfubini}. 
\end{proof}

In particular, if $X$ is a complete metrizable space, $\mathscr{L}_X(\mathrm{Fin}^2)=\Pi^0_3$. This provides a generalization of \cite[Example 4.2]{BL}, where it is proved constructively that there exists a \emph{real} sequence $\bm{x}$ such that $\Lambda_{\bm{x}}(\mathrm{Fin}^2)$ is equal to $[0,1]\setminus \mathbb{Q}$ (note that the latter is not a $F_\sigma$-set, hence $\mathscr{L}_{\mathbb{R}}(\mathrm{Fin}^2)\cap (\Pi^0_2 \setminus \Sigma^0_2)$ is nonempty). 

\medskip

Our second main result deals with ideals with small topological complexity. Suppose that $X$ is a first countable space, and recall that 
\begin{equation}\label{eq:inclusionclosedsets}
\mathscr{L}(\mathcal{I})\subseteq \Pi^0_1,
\end{equation}
provided $\mathcal{I}$ is a $\Sigma^0_2$-ideal, see \cite[Theorem 2.3]{BL}; note that the Hausdorffness hypothesis is not needed here. In the next result, we are going to show that the same conclusion holds if $\mathcal{I}$ is a $\Sigma^0_3$-ideal.
\begin{thm}\label{thm:Gdeltasigmaideals}
Let $X$ be a first countable space, and suppose that $\mathcal{I}$ is a $\Sigma^0_3$-ideal. Then inclusion \eqref{eq:inclusionclosedsets} holds. 
\end{thm}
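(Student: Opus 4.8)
The plan is to argue by contraposition and to reduce the whole statement to a purely combinatorial property of $\mathcal{I}$. Fix a sequence $\bm{x}=(x_n)$ with values in $X$ and write $\Lambda:=\Lambda_{\bm{x}}(\mathcal{I})$; I want to show that $\Lambda$ is closed, equivalently that its complement is open. So let $\eta \notin \Lambda$ and, using first countability, fix a decreasing local base $(U_m)$ of open neighborhoods of $\eta$. Setting $V_m:=\{n \in \omega: x_n \in U_m\}$, the sequence $(V_m)$ is decreasing, and $\eta \in \Lambda$ holds precisely when $(V_m)$ admits a pseudo-intersection in $\mathcal{I}^+$, that is, some $A \in \mathcal{I}^+$ with $A \setminus V_m$ finite for every $m$ (such an $A$ is exactly a positive index set along which $x_n \to \eta$). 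Since $\eta \notin \Lambda$, no such $A$ exists.

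The first point is that this forces some $V_{m_0} \in \mathcal{I}$: indeed, if every $V_m$ were in $\mathcal{I}^+$, then the $P^+$ property of $\mathcal{I}$ (discussed below) would produce a positive pseudo-intersection, contradicting $\eta \notin \Lambda$. Once $V_{m_0}\in \mathcal{I}$ is available, I claim the open set $U_{m_0}$ witnesses openness of the complement at $\eta$: if some $\zeta \in U_{m_0}$ belonged to $\Lambda$, there would be $B \in \mathcal{I}^+$ with $x_n\to \zeta$ along $B$; as $U_{m_0}$ is an open neighborhood of $\zeta$, the set $\{n \in B: x_n \in U_{m_0}\}$ would be a cofinite, hence positive, subset of $B$, yet contained in $V_{m_0}\in\mathcal{I}$ — impossible. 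Thus $U_{m_0}\cap \Lambda=\emptyset$ and $\Lambda$ is closed. This reduction uses only first countability, and it shows that the theorem is equivalent to the combinatorial heart of the matter, namely the \emph{key lemma}: every $\Sigma^0_3$-ideal $\mathcal{I}$ is a $P^+$-ideal. By Theorem \ref{thm:xi}.\ref{item:2xi} this equivalence is, already for nondiscrete metrizable $X$, forced upon us, so the lemma is genuinely unavoidable.

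To prove the key lemma I would consider the pruned tree $T$ of finite strictly increasing selectors $s=(a_0,\dots,a_{l-1})$ with $a_i \in V_i$; each $V_i$ is infinite, so $T$ is perfect and $[T]$ is Polish. Every branch selects a set $A=\{a_0<a_1<\cdots\}$ that automatically satisfies $A \setminus V_m \subseteq \{a_0,\dots,a_{m-1}\}$, so $A\setminus V_m$ is finite; it therefore suffices to find a branch whose range is $\mathcal{I}$-positive. Here I bring in the hypothesis: writing $\mathcal{I}=\bigcup_j C_j$ as an increasing union of $G_\delta$ sets and pulling them back under the continuous range map $r:[T]\to 2^\omega$, one gets $[T]=\bigcup_j r^{-1}[C_j]$ with each $r^{-1}[C_j]$ a $G_\delta$ subset of $[T]$. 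If every $r^{-1}[C_j]$ is meager, then by the Baire category theorem some branch avoids all of them, its range lies outside $\mathcal{I}$, and we are done.

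The main obstacle is the complementary case, in which some $r^{-1}[C_{j_0}]$ is comeager in a subtree $[T_{s_0}]$: then comeagerly many $\mathcal{I}$-small selectors land in the single $G_\delta$ piece $C_{j_0}\subseteq\mathcal{I}$, and one must convert this abundance, together with the positivity of the $V_m$, into an $\mathcal{I}$-positive set lying inside $C_{j_0}$ — a contradiction. I expect this amalgamation step (a Banach--Mazur/fusion argument, or equivalently an analysis of the associated $P^+$-game whose payoff set has complexity controlled by that of $\mathcal{I}$) to carry all the weight, and to be exactly where the hypothesis $\Sigma^0_3$, rather than $\Sigma^0_4$, is essential: for $\mathrm{Fin}^2$, which is $\Sigma^0_4$ and is \emph{not} $P^+$ — indeed Theorem \ref{thm:highBorel}.\ref{item:1mainlimitfamilies} gives $\mathscr{L}(\mathrm{Fin}^2)=\Pi^0_3 \not\subseteq \Pi^0_1$, against Theorem \ref{thm:xi}.\ref{item:1xi} — this very step must break down.
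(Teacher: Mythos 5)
Your first step is sound: for a first countable $X$, if $\mathcal{I}$ is a $P^+$-ideal then every $\Lambda_{\bm{x}}(\mathcal{I})$ is closed (this is Theorem \ref{thm:xi}.\ref{item:1xi}, and your argument via the sets $V_m$ is a correct proof of it). So you have correctly reduced the theorem to the key lemma that every $\Sigma^0_3$-ideal is a $P^+$-ideal. The problem is that you do not prove that lemma: you set up the selector tree, dispose of the easy case where every $r^{-1}[C_j]$ is meager, and then, in the complementary case where some $G_\delta$ piece $C_{j_0}\subseteq \mathcal{I}$ pulls back to a set comeager in a cone, you only state that one ``must convert this abundance \ldots\ into an $\mathcal{I}$-positive set lying inside $C_{j_0}$'' and that you ``expect'' a fusion argument to do it. That is precisely the step where all the mathematical content lives, and it is not routine: a non-meager family of selectors with $\mathcal{I}$-small ranges does not obviously amalgamate into a single $\mathcal{I}$-positive set inside $C_{j_0}$ (an arbitrary $G_\delta$ subset of $\mathcal{P}(\omega)$ carries no submeasure or closure structure to fuse against), and you give no indication of how the $\Sigma^0_3$ bound, as opposed to $\Sigma^0_4$, enters the fusion. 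As written, the proposal is an honest reduction plus an unproved conjecture.

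It is also worth noting that the paper's logic runs in the opposite direction, which is why this gap matters. The paper never proves ``$\Sigma^0_3\Rightarrow P^+$'' directly; instead it shows that if \eqref{eq:inclusionclosedsets} fails then $\emptyset\times\mathrm{Fin}\le_{\mathrm{RB}}\mathcal{I}$ (Lemma \ref{lem:step1Gdeltasigma}, via Talagrand's characterization of meager ideals applied to the restrictions $\mathcal{I}\upharpoonright B_n$), deduces that $\mathcal{I}$ is $\Pi^0_3$-hard (Lemma \ref{lem:step2Gdeltasigma}, since $\emptyset\times\mathrm{Fin}$ is $\Pi^0_3$-complete and an $\le_{\mathrm{RB}}$-reduction induces a Wadge reduction), and concludes from the hierarchy theorem that a $\Pi^0_3$-hard Borel set cannot be $\Sigma^0_3$. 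The $P^+$ property of $\Sigma^0_3$-ideals is then obtained as Corollary \ref{cor:justP+}, i.e.\ as a consequence of the theorem combined with Theorem \ref{thm:xi}.\ref{item:2xi}. If you want to complete your route, you need an independent proof of the key lemma (descriptive-set-theoretically this is a genuine theorem, cf.\ the reference given for Corollary \ref{cor:justP+}); otherwise the argument is circular or incomplete.
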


It is worth noting that Theorem \ref{thm:Gdeltasigmaideals} is not a consequence of the former result as it really includes new cases: indeed, as remarked also by Solecki in \cite[p. 345]{MR1416872}, there exists a $\Delta^0_3$-ideal on $\omega$ (hence, both $\Sigma^0_3$ and $\Pi^0_3$) which is neither $\Sigma^0_2$ nor $\Pi^0_2$, see \cite{MR1321463}; cf. also \cite{Calb1, Calb2} and \cite[Proposition 1.2.1]{Mezathesis}.  

In addition, Theorem \ref{thm:Gdeltasigmaideals} allows us to prove a generalization of the folklore result that every $\Sigma^0_2$-ideal is a $P^+$-ideal, see \cite{MR748847}. For a different proof of the second part, see also \cite[Proposition 10.1]{MR4448270}. 
\begin{cor}\label{cor:justP+}
Let $\mathcal{I}$ be a $\Sigma^0_3$-ideal on $\omega$. Then $\mathcal{I}$ is a $P^+$-ideal, and there exists a $\Sigma^0_2$-ideal $\mathcal{J}$ such that $\mathcal{I}\subseteq \mathcal{J}$. 
\end{cor}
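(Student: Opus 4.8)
The plan is to read the $P^+$ conclusion directly off the results already assembled, and then to isolate the $F_\sigma$-boundedness of $\mathcal{I}$ as the one point that requires genuine work.

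For the first assertion I would fix a single concrete witness space and let the two relevant theorems interact. Take $X:=\{0\}\cup\{1/(n+1):n\in\omega\}$ with the topology inherited from $\mathbb{R}$; this $X$ is nondiscrete and metrizable, hence in particular first countable, so both Theorem~\ref{thm:Gdeltasigmaideals} and Theorem~\ref{thm:xi}.\ref{item:2xi} are available for it. Since $\mathcal{I}$ is $\Sigma^0_3$ it is analytic, and therefore it has the hereditary Baire property, as recalled in the Introduction. Now Theorem~\ref{thm:Gdeltasigmaideals} gives $\mathscr{L}(\mathcal{I})\subseteq\Pi^0_1$, and feeding this into the equivalence of Theorem~\ref{thm:xi}.\ref{item:2xi}---which is licensed precisely by $X$ being nondiscrete metrizable and $\mathcal{I}$ having the hereditary Baire property---yields at once that $\mathcal{I}$ is a $P^+$-ideal. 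This is the step I am confident about; it already contains the announced generalisation, since every $\Sigma^0_2$-ideal is in particular $\Sigma^0_3$, recovering the folklore statement of \cite{MR748847}.

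For the second assertion, producing a $\Sigma^0_2$-ideal $\mathcal{J}\supseteq\mathcal{I}$ is exactly saying that $\mathcal{I}$ is \emph{$F_\sigma$-bounded}. By Mazur's description of $F_\sigma$-ideals, this amounts to constructing a lower semicontinuous submeasure $\psi$ on $\omega$ with $\psi(\omega)=\infty$ that stays finite on every member of $\mathcal{I}$, so that $\mathcal{J}:=\{A\subseteq\omega:\psi(A)<\infty\}$ is a proper $F_\sigma$-ideal containing $\mathcal{I}$. Since $\mathcal{I}$ is analytic it is meager, so the Jalali-Naini--Talagrand criterion yields an interval partition $(I_n)$ of $\omega$ with $\{n:I_n\subseteq A\}$ finite for every $A\in\mathcal{I}$; the hereditary $F_\sigma$-family $\{A:\{n:I_n\subseteq A\}\text{ is finite}\}$ then contains $\mathcal{I}$, but it is in general not closed under finite unions and the ideal it generates may be improper, so this only provides a first approximation.

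The main obstacle is exactly this last upgrade: turning a hereditary $F_\sigma$-cover of $\mathcal{I}$ into an honest proper $F_\sigma$-ideal, equivalently building the submeasure $\psi$ above. This is where the hypothesis is genuinely consumed---not every ideal is $F_\sigma$-bounded---and it is also the reason one cannot route the second assertion back through the $\mathscr{L}(\cdot)$-framework: the only relevant implication there, Theorem~\ref{thm:xi}.\ref{item:5xi}, merely says that \emph{failure} of $P^+$ forces $\Sigma^0_2\subseteq\mathscr{L}(\mathcal{I})$, so such an attempt would be circular. I would therefore carry out the submeasure construction directly, exploiting the diagonalisation made available by the $P^+$-property secured in the first step (and the $G_{\delta\sigma}$-form of $\mathcal{I}$), or else simply cite the statement; see \cite[Proposition~10.1]{MR4448270} for an independent proof of this second part.
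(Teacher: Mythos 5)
Your argument for the first assertion is exactly the paper's: $\mathcal{I}$ is Borel, hence has the hereditary Baire property, so Theorem~\ref{thm:Gdeltasigmaideals} gives $\mathscr{L}(\mathcal{I})\subseteq\Pi^0_1$ over a nondiscrete metrizable $X$, and the \textsc{only if} direction of Theorem~\ref{thm:xi}.\ref{item:2xi} then forces $\mathcal{I}$ to be a $P^+$-ideal. That half is complete and matches the intended proof.

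The second assertion is where you have a genuine gap. Your primary plan --- build a lower semicontinuous submeasure by hand, starting from a Jalali-Naini--Talagrand interval partition and then ``upgrading'' the hereditary $F_\sigma$-cover to a proper $F_\sigma$-ideal --- is left unexecuted precisely at the step you yourself flag as the main obstacle, and the Talagrand partition alone will not get you there (it only witnesses meagerness, which every Borel ideal has, whereas $F_\sigma$-boundedness genuinely fails for some Borel ideals). The missing idea is that this entire construction is already a known theorem, and it reduces the second assertion to the first: a Borel ideal on $\omega$ is contained in a $\Sigma^0_2$-ideal if and only if it is contained in a $P^+$-ideal (this is the content of the reference \cite{MR748847} invoked by the paper). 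Having established in the first part that $\mathcal{I}$ is itself a $P^+$-ideal, it is trivially contained in a $P^+$-ideal, and the equivalence immediately hands you a $\Sigma^0_2$-ideal $\mathcal{J}\supseteq\mathcal{I}$. Your fallback of citing \cite[Proposition~10.1]{MR4448270} would also close the gap (the paper itself mentions it as an alternative proof), but as written your main route stops exactly where the real work begins, and you do not identify the reduction that makes the work unnecessary.
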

\begin{proof}
The first part follows putting together Theorem \ref{thm:Gdeltasigmaideals} and Theorem \ref{thm:xi}.\ref{item:2xi} (note that $\mathcal{I}$ is Borel, hence with the hereditary Baire property). The second part follows by the known fact that a Borel ideal on $\omega$ is contained in a $\Sigma^0_2$-ideal if and only if it is contained in a $P^+$-ideal, see \cite{MR748847}. 
\end{proof}

At this point, we divide the proof of Theorem \ref{thm:Gdeltasigmaideals} into two intermediate steps. 
To this aim, we recall that properties of ideals can be often expressed by finding \emph{critical} ideals with respect to some preorder, cf. e.g. the survey \cite{MR2777744}. 
To this aim, let $\mathcal{I}$ and $\mathcal{J}$ be two ideals on two countably infinite sets $Z$ and $W$, respectively. Then we say that $\mathcal{I}$ \emph{is below} $\mathcal{J}$ \emph{in the Rudin--Blass ordering}, shortened as 
$$
\mathcal{I} \le_{\mathrm{RB}} \mathcal{J},
$$ 
if there is a finite-to-one map $\phi: W\to Z$ such that $S \in \mathcal{I}$ if and only if $\phi^{-1}[S] \in \mathcal{J}$ for all subsets $S\subseteq Z$. 
The restriction of these orderings to maximal ideals $\mathcal{I}$, and the Borel cardinality of the quotients $\mathcal{P}(Z)/\mathcal{I}$ have been extensively studied, see e.g. \cite{MR0396267, MR1476761, MR1617463} and references therein.  

%For, we say that an ideal $\mathcal{I}$ on $Z$ contains a copy of an ideal $\mathcal{J}$ on $W$, where $Z$ and $W$ are two countably infinite sets, if there exists a injective map $\phi: W\to Z$ such that 
\begin{lem}\label{lem:step1Gdeltasigma}
Let $X$ be a first countable Hausdorff space and $\mathcal{I}$ be an ideal on $\omega$ with the hereditary Baire property. Suppose that inclusion \eqref{eq:inclusionclosedsets} does not hold. Then $\emptyset \times \mathrm{Fin}\le_{\mathrm{RB}} \mathcal{I}$. 
\end{lem}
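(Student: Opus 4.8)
The plan is to argue by contraposition, exactly as the statement is phrased: assuming that \eqref{eq:inclusionclosedsets} fails, I fix a sequence $\bm{x}=(x_n)$ in $X$ whose set of $\mathcal{I}$-limit points $L:=\Lambda_{\bm{x}}(\mathcal{I})$ is not closed, together with a point $\eta\in\overline{L}\setminus L$. Since $X$ is first countable and Hausdorff, I would first extract a sequence of \emph{distinct} points $(\ell_j)$ of $L$ with $\ell_j\to\eta$ and $\ell_j\neq\eta$, and fix a decreasing open neighbourhood base $(U_m)$ at $\eta$. Writing $V_m:=\{n:x_n\in U_m\}$ (a decreasing sequence) and picking for each $j$ a witness $A_j\in\mathcal{I}^+$ with $\lim_{n\in A_j}x_n=\ell_j$, the crucial reformulation is that the condition $\eta\notin L$ says precisely that $(V_m)$ admits no $\mathcal{I}$-positive pseudo-intersection: there is no $B\in\mathcal{I}^+$ with $B\setminus V_m$ finite for all $m$.

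The goal $\emptyset\times\mathrm{Fin}\le_{\mathrm{RB}}\mathcal{I}$ unwinds to producing pairwise disjoint finite sets $\{D_{n,k}:(n,k)\in\omega^2\}$ partitioning $\omega$, so that the induced finite-to-one map $\phi$ (sending the block $D_{n,k}$ to the point $(n,k)$) satisfies $S\in\emptyset\times\mathrm{Fin}\iff\phi^{-1}[S]\in\mathcal{I}$. Concretely this amounts to two requirements: \textbf{(a)} every selection using finitely many blocks from each column, i.e.\ $\bigcup_n\bigcup_{k\in T_n}D_{n,k}$ with all $T_n$ finite, lies in $\mathcal{I}$; and \textbf{(b)} every infinite union of blocks within a single column, i.e.\ $\bigcup_{k\in T}D_{n,k}$ with $T$ infinite, lies in $\mathcal{I}^+$. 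The strategy is to first build pairwise disjoint $\mathcal{I}$-positive ``columns'' $C_n$ with $C_n\subseteq^{*}V_n$ (that is, $C_n\setminus V_n$ finite) reaching arbitrarily deep into the filtration $(V_m)$, and then to cut each $C_n$ into the finite blocks $(D_{n,k})_k$.

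The main obstacle is the \emph{disjointness} of these deep positive columns: any $\mathcal{I}$-positive subset of $V_n$ must itself penetrate the tail of the filtration, and a naive greedy disjointification can fail when the layers $V_m\setminus V_{m+1}$ happen to be $\mathcal{I}$-small. I would resolve this using Hausdorffness of $X$: passing to a subsequence, I can choose pairwise disjoint open sets $O_n\ni\ell_{j_n}$ with $\ell_{j_n}\in U_n$, and then set $C_n:=\{i\in A_{j_n}:x_i\in O_n\}$. Since $O_n$ is an open neighbourhood of $\ell_{j_n}$ and $\lim_{i\in A_{j_n}}x_i=\ell_{j_n}$, each $C_n$ differs from $A_{j_n}$ by a finite set, hence is $\mathcal{I}$-positive and satisfies $C_n\subseteq^{*}V_n$; and the $C_n$ are automatically pairwise disjoint because the $O_n$ are. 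This is the key step that converts the topological separation of the limit points into the required combinatorial partition.

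With the columns in hand, requirement \textbf{(b)} follows from the hereditary Baire property: each restriction $\mathcal{I}\upharpoonright C_n$ is a meager ideal, so by the Jalali-Naini--Talagrand characterisation of meager ideals one may split $C_n$ into finite blocks $(D_{n,k})_k$ such that no member of $\mathcal{I}$ contains infinitely many of them; thus any infinite sub-union is $\mathcal{I}$-positive. Requirement \textbf{(a)} follows from the pseudo-intersection failure: if $G=\bigcup_n\bigcup_{k\in T_n}D_{n,k}$ with each $T_n$ finite were $\mathcal{I}$-positive, then using that $G\cap C_n$ is finite together with $C_n\subseteq^{*}V_n\subseteq V_m$ for $n\ge m$, one checks that $G\setminus V_m$ is finite for every $m$, so that $G$ would be an $\mathcal{I}$-positive pseudo-intersection of $(V_m)$, contradicting $\eta\notin L$. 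Finally I would absorb the leftover indices $\omega\setminus\bigcup_n C_n$ one by one into the blocks of the first column, which keeps every block finite and affects neither \textbf{(a)} (only finitely many extra points enter any finite-column selection) nor \textbf{(b)} (sub-unions only grow); this makes $\phi$ total and finite-to-one, completing the reduction.
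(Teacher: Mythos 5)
Your argument is essentially the paper's own proof in different clothing: both proceed by fixing a sequence of distinct $\mathcal{I}$-limit points converging to a point $\eta\notin\Lambda_{\bm{x}}(\mathcal{I})$, carving out pairwise disjoint $\mathcal{I}$-positive ``columns'' from the witnessing sets, applying the Jalali-Naini--Talagrand characterisation of meager ideals (via the hereditary Baire property) to each restricted ideal, and gluing the resulting data into a finite-to-one reduction witnessing $\emptyset\times\mathrm{Fin}\le_{\mathrm{RB}}\mathcal{I}$. The only real differences are cosmetic: the paper disjointifies the witnesses set-theoretically, setting $B_{n+1}:=A_{n+1}\setminus\bigcup_{k\le n}B_k$ (Hausdorffness guaranteeing that witnesses for distinct limit points overlap only finitely), whereas you separate them by disjoint open sets $O_n$; and the paper invokes Talagrand in the form $\mathrm{Fin}\le_{\mathrm{RB}}\mathcal{I}\upharpoonright B_n$ rather than your equivalent finite-block formulation. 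Your explicit reformulation of $\eta\notin\Lambda_{\bm{x}}(\mathcal{I})$ as the non-existence of an $\mathcal{I}$-positive pseudo-intersection of $(V_m)$ is a nice way to isolate exactly what requirement (a) needs.

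One step should be tightened. In (a) you deduce that $G\setminus V_m$ is finite from ``$G\cap C_n$ is finite for each $n$'' and ``$C_n\subseteq^{*}V_n\subseteq V_m$ for $n\ge m$''; the mod-finite inclusion is not enough, because the finite error sets $C_n\setminus V_n$ for $n\ge m$ can each contribute a point of $G$ outside $V_m$, and infinitely many such contributions make $G\setminus V_m$ infinite. The repair is immediate and already latent in your construction: shrink each $O_n$ so that $O_n\subseteq U_n$ (possible since $\ell_{j_n}\in U_n$ and $U_n$ is open), which gives $C_n\subseteq V_n$ outright; then $G\setminus V_m$ is contained in $\bigcup_{n<m}(G\cap C_n)$ together with the finitely many absorbed leftover indices, hence is finite. (For what it is worth, the paper's own verification glosses over the analogous point: it needs the sets $B_n$ trimmed so that $x_k\in U_n$ for $k\in B_n$ before the union of finite traces can be claimed to converge to $\eta$.) With that one-line adjustment your proof is correct.
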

\begin{proof}
Since inclusion \eqref{eq:inclusionclosedsets} fails and $X$ is first countable, there exists a sequence $\bm{x}$ such that $S:=\Lambda_{\bm{x}}(\mathcal{I})$ is not closed, hence not sequentially closed. Therefore there exists a sequence $\bm{y}$ taking values in $S$ which is convergent to some limit $\eta \in X\setminus S$. Since $X$ is Hausdorff, we may suppose without loss of generality that $y_n \neq y_m$ for all distinct $n,m \in \omega$. Now, for each $n \in \omega$, there exists $A_n \in \mathcal{I}^+$ such that $\lim_{k \in A_n}x_k=y_n$. Define $B_0:=A_0\cup (\omega\setminus \bigcup_nA_n)$ and, recursively, $B_{n+1}:=A_{n+1}\setminus \bigcup_{k\le n}B_k$ for all $n\in \omega$. Hence $\{B_n: n\in \omega\}$ is a partition of $\omega$ into $\mathcal{I}$-positive sets, for each $n \in \omega$, the restriction $\mathcal{I}\upharpoonright B_n$ is an ideal on $B_n$ with the Baire property. 

It follows by Talagrand's characterization of meager ideals that $\mathrm{Fin} \le_{\mathrm{RB}} \mathcal{I}\upharpoonright B_n$ for each $n \in \omega$. More explicitly, for each $n \in \omega$ there exists a finite-to-one map $\phi_n: B_n \to \omega$ such that 
\begin{equation}\label{eq:RB1}
\forall W \subseteq \omega, \quad 
\phi_n^{-1}[W] \in \mathcal{I} 
\,\,\,\text{ if and only if }\,\,\,
W \in \mathrm{Fin},
\end{equation}
see \cite[Theorem 2.1]{MR0579439}; cf. also \cite{MR4566746} for further characterizations of meager ideals based of $\mathcal{I}$-limit points of sequences. 
Define the map   
$\phi: \omega\to \omega^2$ by 
$$
\forall n\in \omega, \forall k \in B_n, \quad 
\phi(k)=(n,\phi_n(k)).
$$

We claim that $\phi$ is a witnessing function for $\emptyset\times \mathrm{Fin}\le_{\mathrm{RB}}\mathcal{I}$. 
For, suppose that $W\subseteq \omega^2$ belongs to $\emptyset \times \mathrm{Fin}$. Then 
\begin{equation}\label{eq:RB2}
\phi^{-1}[W]=\bigcup_{n \in \omega}\phi_n^{-1}[\{k \in \omega: (n,k) \in W\}]
\end{equation}
Since each $\phi_n$ is finite-to-one, $\phi^{-1}[W]$ has finite intersection with each $B_n$. Then either $\phi^{-1}[W]$ is finite or the subsequence $(x_n: n \in \phi^{-1}[W])$ is convergent to $\eta$, while $\eta$ is not an $\mathcal{I}$-limit point of $\bm{x}$. Hence, in both cases, $\phi^{-1}[W] \in \mathcal{I}$. Conversely, suppose that $W$ does not belong to $\emptyset \times \mathrm{Fin}$, so that there exists $n_0 \in \omega$ such that $\{k \in \omega: (n_0,k) \in W\}\notin \mathrm{Fin}$. It follows by \eqref{eq:RB1} and \eqref{eq:RB2} that $\phi^{-1}[W]$ contains 
$$
\phi_{n_0}^{-1}[\{k \in \omega: (n_0,k) \in W\}] \in \mathcal{I}^+. 
$$
Therefore $\phi^{-1}[W]\in \mathcal{I}$ if and only if $W \in \emptyset \times \mathrm{Fin}$ for each $W\subseteq \omega^2$. 
\end{proof}

\begin{cor}\label{cor:RB}
Under the same hypotheses of Lemma \ref{lem:step1Gdeltasigma}, $\Sigma^0_2 \subseteq \mathscr{L}(\mathcal{I})$. 
\end{cor}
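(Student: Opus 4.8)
The plan is to combine Lemma~\ref{lem:step1Gdeltasigma} with Theorem~\ref{thm:xi}.\ref{item:5xi}. The lemma already produces a Rudin--Blass reduction $\emptyset\times\mathrm{Fin}\le_{\mathrm{RB}}\mathcal I$, and Theorem~\ref{thm:xi}.\ref{item:5xi} asserts that $\Sigma^0_2\subseteq\mathscr L(\mathcal I)$ as soon as $\mathcal I$ has the hereditary Baire property (one of the standing hypotheses) and is not a $P^+$-ideal. Thus the whole problem collapses to a single implication: $\emptyset\times\mathrm{Fin}\le_{\mathrm{RB}}\mathcal I$ forces $\mathcal I$ to fail the $P^+$ property.

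To prove that implication I would first record the folklore fact that the $P^+$ property is reflected upward along Rudin--Blass reductions, i.e. if $\mathcal J\le_{\mathrm{RB}}\mathcal I$ via a finite-to-one map $\phi$ and $\mathcal I$ is a $P^+$-ideal, then so is $\mathcal J$. Given a decreasing sequence $(R_n)$ in $\mathcal J^+$, the preimages $\phi^{-1}[R_n]$ are decreasing and lie in $\mathcal I^+$ (since $R\in\mathcal J\iff\phi^{-1}[R]\in\mathcal I$); a $P^+$-pseudo-union $B\in\mathcal I^+$ for them yields $R:=\phi[B]\in\mathcal J^+$, and the finite-to-one hypothesis gives $R\setminus R_n\subseteq\phi[B\setminus\phi^{-1}[R_n]]$, which is finite for every $n$. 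It then remains to note that $\emptyset\times\mathrm{Fin}$ is itself not a $P^+$-ideal: the tails $R_n:=\{(i,k)\in\omega^2:i\ge n\}$ form a decreasing sequence in $(\emptyset\times\mathrm{Fin})^+$ whose only candidate pseudo-unions $B$ satisfy $B\setminus R_n$ finite for all $n$, hence have all columns finite and therefore belong to $\emptyset\times\mathrm{Fin}$. Taking $\mathcal J=\emptyset\times\mathrm{Fin}$ in the reflection statement gives that $\mathcal I$ is not $P^+$, and Theorem~\ref{thm:xi}.\ref{item:5xi} finishes the argument.

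An alternative, more self-contained route avoids Theorem~\ref{thm:xi}.\ref{item:5xi} altogether: the same finite-to-one witness $\phi$ transforms any sequence $\bm y$ into $\bm x$ defined by $x_k:=y_{\phi(k)}$, and a direct check shows $\Lambda_{\bm x}(\mathcal I)=\Lambda_{\bm y}(\mathcal J)$ for an arbitrary topological space $X$, so that $\mathscr L(\emptyset\times\mathrm{Fin})\subseteq\mathscr L(\mathcal I)$; the conclusion is then immediate from the identity $\mathscr L(\emptyset\times\mathrm{Fin})=\Sigma^0_2$ recorded in \eqref{eq:basicrelationshipL}. Either way the transfer computations are routine; the one point genuinely requiring attention is that $\Sigma^0_2\subseteq\mathscr L(\mathcal I)$ presupposes that every $F_\sigma$ subset of $X$ be realizable as a set of $\mathcal I$-limit points, and since each $\Lambda_{\bm x}(\mathcal I)$ is contained in the closure of the countable range of $\bm x$, this realizability is available precisely under the separability/second-countability hypotheses on $X$ feeding Theorem~\ref{thm:xi}.\ref{item:5xi} and \eqref{eq:basicrelationshipL}. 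I expect this hypothesis-matching, rather than the algebraic transfer of the $P^+$ property along the reduction, to be the only delicate step.
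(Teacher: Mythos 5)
Your argument is correct, and in fact your ``alternative, more self-contained route'' is exactly the paper's proof: the paper deduces $\mathscr{L}(\emptyset\times\mathrm{Fin})\subseteq\mathscr{L}(\mathcal{I})$ from the Rudin--Blass reduction of Lemma~\ref{lem:step1Gdeltasigma} (citing \cite[Proposition 3.8]{Xi} for the monotonicity of $\mathscr{L}$ under $\le_{\mathrm{RB}}$, which is the transfer $x_k:=y_{\phi(k)}$ you sketch) and then concludes via \eqref{eq:basicrelationshipL}. Your primary route is genuinely different: it converts the reduction into the failure of the $P^+$ property (correctly, via downward reflection of $P^+$ along $\le_{\mathrm{RB}}$ and the standard witness that $\emptyset\times\mathrm{Fin}$ is not $P^+$) and then invokes Theorem~\ref{thm:xi}.\ref{item:5xi}. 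That route works, but note it makes Lemma~\ref{lem:step1Gdeltasigma} superfluous: the failure of \eqref{eq:inclusionclosedsets} already yields that $\mathcal{I}$ is not $P^+$ directly by Theorem~\ref{thm:xi}.\ref{item:1xi}, so the detour through the reduction buys nothing there; its real value is elsewhere in the paper (the $\Pi^0_3$-hardness of Lemma~\ref{lem:step2Gdeltasigma}). Your closing caveat is well taken and applies to both routes equally: the corollary as stated inherits only ``first countable Hausdorff'' from Lemma~\ref{lem:step1Gdeltasigma}, whereas both Theorem~\ref{thm:xi}.\ref{item:5xi} and the inclusion $\Sigma^0_2\subseteq\mathscr{L}(\emptyset\times\mathrm{Fin})$ in \eqref{eq:basicrelationshipL} require additional separability/second-countability assumptions on $X$; the paper glosses over this, and it is harmless in the only application (the Cantor space in Theorem~\ref{thm:onlytwo}), but you are right to flag it.
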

\begin{proof}
Thanks to Lemma \ref{lem:step1Gdeltasigma} and \cite[Proposition 3.8]{Xi}, we have $\mathscr{L}(\emptyset \times \mathrm{Fin}) \subseteq \mathscr{L}(\mathcal{I})$. The claim follows by 
%Theorem \ref{thm:highBorel} or, simply, by 
Equation \eqref{eq:basicrelationshipL}. 
\end{proof}

For the next intermediate result, given topological spaces $X,Y$ and subsets $A\subseteq X$ and $B\subseteq Y$, we say that $A$ \emph{is Wadge reducible to} $B$, shortened as 
$$
A \le_{\mathrm{W}} B,
$$
if there exists a continuous map $\Phi: X \to Y$ such that $\Phi^{-1}[B]=A$ (or, equivalently, $x \in A$ if and only if $\Phi(x) \in B$ for all $x \in X$), see e.g. \cite[Definition 21.13]{K}. If, in addition, $X$ and $Y$ are Polish spaces with $X$ zero-dimensional, then $B$ is said to be $\Pi^0_3$\emph{-hard} if $A \le_{\mathrm{W}} B$ for some $A \in \Pi^0_3(X)$. Lastly, if $B$ is a $\Pi^0_3(Y)$ set which is also $\Pi^0_3$-hard, then it is called $\Pi^0_3$\emph{-complete}. (Analogous definitions can be given for other classes of sets in Polish spaces, see \cite[Definition 22.9]{K}.)

\begin{lem}\label{lem:step2Gdeltasigma}
Let $\mathcal{I}$ be an ideal on $\omega$ such that $\emptyset \times \mathrm{Fin}\le_{\mathrm{RB}} \mathcal{I}$. Then $\mathcal{I}$ is $\Pi^0_3$-hard.
\end{lem}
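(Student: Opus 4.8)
The plan is to reduce the lemma to the single known fact that $\emptyset\times\mathrm{Fin}$ is $\Pi^0_3$-complete, and to transfer this hardness to $\mathcal{I}$ along a Wadge reduction manufactured directly from the given Rudin--Blass reduction. So the first step is to convert the combinatorial reduction into a topological one. Let $\phi:\omega\to\omega^2$ be the finite-to-one map witnessing $\emptyset\times\mathrm{Fin}\le_{\mathrm{RB}}\mathcal{I}$, so that $S\in\emptyset\times\mathrm{Fin}$ if and only if $\phi^{-1}[S]\in\mathcal{I}$ for every $S\subseteq\omega^2$. Identifying $\mathcal{P}(\omega^2)$ with the Cantor space $\{0,1\}^{\omega^2}$ and $\mathcal{P}(\omega)$ with $\{0,1\}^\omega$, I would define $\Psi:\{0,1\}^{\omega^2}\to\{0,1\}^\omega$ by $\Psi(x)(n):=x(\phi(n))$ for all $n\in\omega$. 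Then $\Psi(\chi_S)=\chi_{\phi^{-1}[S]}$, and each coordinate of $\Psi$ is a single coordinate projection of its argument, so $\Psi$ is continuous. The defining biconditional of the Rudin--Blass reduction yields $\Psi^{-1}[\mathcal{I}]=\emptyset\times\mathrm{Fin}$, which is precisely the statement $\emptyset\times\mathrm{Fin}\le_{\mathrm{W}}\mathcal{I}$; note that the reducing space $\{0,1\}^{\omega^2}$ is zero-dimensional Polish, as required in the definition of hardness.

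The second step is to recall the standard fact that $\emptyset\times\mathrm{Fin}$ is $\Pi^0_3$-complete. Under the identification $\{0,1\}^{\omega^2}\cong(\{0,1\}^\omega)^\omega$ it equals $\{(x_n)_n:\forall n,\ x_n\in\mathrm{Fin}\}=\bigcap_n\pi_n^{-1}[\mathrm{Fin}]$, a countable intersection of copies of the $\Sigma^0_2$-complete ideal $\mathrm{Fin}$. Since every $\Pi^0_3$ set is a countable intersection of $\Sigma^0_2$ sets, a routine coordinatewise reduction proves hardness: given a target $A=\bigcap_nA_n$ with each $A_n\in\Sigma^0_2$, one chooses continuous maps $f_n$ with $f_n^{-1}[\mathrm{Fin}]=A_n$ and bundles them into $z\mapsto(f_n(z))_n$, so that $A\le_{\mathrm{W}}\emptyset\times\mathrm{Fin}$. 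I would cite this from Kechris rather than reproduce it, since it is the canonical $\Pi^0_3$-complete example.

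Finally, because $\le_{\mathrm{W}}$ is transitive (continuous maps compose), combining the two steps gives $A\le_{\mathrm{W}}\emptyset\times\mathrm{Fin}\le_{\mathrm{W}}\mathcal{I}$ for every $\Pi^0_3$ set $A$, whence $\mathcal{I}$ is $\Pi^0_3$-hard. In fact the first step alone already exhibits $\emptyset\times\mathrm{Fin}$ itself as a $\Pi^0_3$ set Wadge-below $\mathcal{I}$, and the completeness of the second step upgrades this to the statement that all $\Pi^0_3$ sets reduce to $\mathcal{I}$. I do not anticipate a genuine obstacle: the only points needing care are verifying that $\Psi$ is continuous and that $\Psi^{-1}[\mathcal{I}]$ is exactly $\emptyset\times\mathrm{Fin}$ (both immediate from the biconditional), and ensuring the zero-dimensionality hypothesis on the reducing space, which holds since we reduce from the Cantor space $\{0,1\}^{\omega^2}$.
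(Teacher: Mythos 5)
Your proposal is correct and follows essentially the same route as the paper: both arguments turn the finite-to-one map $\phi$ witnessing the Rudin--Blass reduction into the continuous map $S\mapsto\phi^{-1}[S]$, observe that the biconditional gives exactly $\emptyset\times\mathrm{Fin}\le_{\mathrm{W}}\mathcal{I}$, and conclude by citing the $\Pi^0_3$-completeness of $\emptyset\times\mathrm{Fin}$ together with transitivity of Wadge reducibility. Your coordinatewise continuity check is just a slightly slicker phrasing of the paper's verification that $\Phi(A)\cap[0,n]$ depends only on $A\cap\phi[\{0,\ldots,n\}]$.
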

\begin{proof}
First, recall that $\emptyset \times \mathrm{Fin}$ is a $\Pi^0_3$-complete subset of $\mathcal{P}(\omega^2)$, see e.g. \cite[Exercise 23.1]{K}. Hence, to complete the proof, it is sufficient to show that
$$
\emptyset\times \mathrm{Fin} \le_{\mathrm{W}} \mathcal{I}. 
$$

By hypothesis, there exists a finite-to-one function $\phi: \omega \to \omega^2$ such that $S \in \emptyset \times \mathrm{Fin}$ if and only if $\phi^{-1}[S] \in \mathcal{I}$. 
%for all $S\in \mathcal{P}(\omega^2)$. 
Now, define the map 
$\Phi: \mathcal{P}(\omega^2) \to \mathcal{P}(\omega)$ by $\Phi(S):=\phi^{-1}[S]$, so that $S \in \emptyset \times \mathrm{Fin}$ if and only if $\Phi(S) \in \mathcal{I}$. 
Hence, we only need to show that $\Phi$ is continuous. For, fix $n \in \omega$ and define $S:=\phi[\{0,\ldots,n\}]$. 
It follows that, for all $A,B\subseteq \omega^2$ with $A\cap S=B\cap S$, 
\begin{displaymath}
\begin{split}
\Phi(A) \cap [0,n]
&%=\phi^{-1}[A]\cap [0,n]
=\{k \in \omega: \phi(k) \in A \text{ and }k\le n\}\\
&=\{k \in \omega: \phi(k) \in A \cap S\}
%=\{k \in \omega: \phi(k) \in B \cap S\}
=\Phi(B) \cap [0,n],
\end{split}
\end{displaymath}
which proves the continuity of $\Phi$. 
\end{proof}

\begin{proof}
[Proof of Theorem \ref{thm:Gdeltasigmaideals}]
  Suppose that $X$ is a first countable space and $\mathcal{I}$ is an ideal on $\omega$ such that inclusion \eqref{eq:inclusionclosedsets} fails. 
  We claim that $\mathcal{I}$ is not a $\Sigma^0_3$-ideal. 

If $\mathcal{I}$ is not a Borel ideal, then the claim is trivial. Hence, let us suppose hereafter that $\mathcal{I}$ is Borel. In particular, $\mathcal{I}$ has the hereditary Baire property. 
At this point, it follows by Lemma \ref{lem:step1Gdeltasigma} and Lemma \ref{lem:step2Gdeltasigma} that $\mathcal{I}$ is a $\Pi^0_3$-hard. To sum up, $\mathcal{I}$ is a Borel subset of a zero-dimensional Polish space and it is $\Pi^0_3$-hard. We conclude by \cite[Therem 22.10]{K} that the $\mathcal{I}$ is not a $\Sigma^0_3$-ideal. 
\end{proof}

\section{Simply analyticity}\label{sec:sanalytic}

%\textcolor{red}{[Rileggere per grassetti e Hausdorff]} 

In \cite[Proposition 4.1]{BL}, the first and last-named authors proved the following: 
\begin{prop}\label{prop:original}
Let $\bm{x}$ be a sequence taking values in a Hausdorff regular first countable space $X$. Let also $\mathcal{I}$ be a coanalytic ideal on $\omega$. Then $\Lambda_{\bm{x}}(\mathcal{I})$ is analytic. 
\end{prop}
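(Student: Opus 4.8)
The plan is to realize $\Lambda_{\bm{x}}(\mathcal{I})$ as a projection of an analytic subset of the product $X \times \mathcal{P}(\omega)$ and to invoke the stability of analytic sets under projections along Polish factors. Concretely, set
\[
\Gamma:=\left\{(\eta,A)\in X\times \mathcal{P}(\omega): A\in \mathcal{I}^+ \text{ and } \lim\nolimits_{k\in A}x_k=\eta\right\},
\]
so that $\Lambda_{\bm{x}}(\mathcal{I})$ is exactly the image of $\Gamma$ under the projection $\pi_X:X\times \mathcal{P}(\omega)\to X$ (here one uses $\mathcal{I}\supseteq \mathrm{Fin}$, so that each $A\in \mathcal{I}^+$ is infinite and the notion of limit along $A$ is meaningful). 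Since $\mathcal{I}$ is coanalytic, its coideal $\mathcal{I}^+=\mathcal{P}(\omega)\setminus \mathcal{I}$ is analytic, hence $X\times \mathcal{I}^+$ is analytic in $X\times \mathcal{P}(\omega)$. Thus, if we can show that the convergence relation
\[
C:=\left\{(\eta,A)\in X\times \mathcal{P}(\omega): \lim\nolimits_{k\in A}x_k=\eta\right\}
\]
is analytic, then $\Gamma=(X\times \mathcal{I}^+)\cap C$ is analytic, and the conclusion follows at once because the projection of an analytic set along the Polish factor $\mathcal{P}(\omega)$ is again analytic.

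First I would reduce to a separable range. The values of $\bm{x}$ lie in the subspace $Y:=\overline{\{x_n:n\in\omega\}}$, which is separable (with dense set $\{x_n:n\in\omega\}$), regular, Hausdorff, and first countable; moreover every subsequential limit lies in the closed set $Y$, so $\Lambda_{\bm{x}}(\mathcal{I})\subseteq Y$ and it suffices to analyze $C$ with $\eta$ ranging over $Y$. Next I would linearize the topological condition defining $C$. For a fixed $\eta$, first countability provides a decreasing local base $(U^{\eta}_m)_m$ of open neighborhoods, which by regularity may be chosen with $\overline{U^{\eta}_{m+1}}\subseteq U^{\eta}_m$; relative to such a base one has $\lim_{k\in A}x_k=\eta$ if and only if, for every $m$, the set $\{k\in A: x_k\notin U^{\eta}_m\}$ is finite. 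For each fixed neighborhood the latter is an $F_\sigma$ condition on $A$ in the Cantor space $\mathcal{P}(\omega)$, namely $\bigcup_j\{A:\,A\cap\{k>j:x_k\notin U^{\eta}_m\}=\emptyset\}$, and intersecting over $m$ exhibits the section $C_\eta:=\{A:(\eta,A)\in C\}$ as a Borel (indeed $\Pi^0_3$) subset of $\mathcal{P}(\omega)$.

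The hard part will be the passage from the sections $C_\eta$ to the joint relation $C$: the local bases $(U^{\eta}_m)_m$ depend on the point $\eta$, and in general there is no countable family of open sets that is simultaneously a neighborhood base at every $\eta\in Y$ (equivalently, $Y$ need not be second countable). I would attempt to resolve this by folding the choice of local base into the witness: using first countability and regularity, rewrite $C$ as the image, under projection along a $\sigma$-compact parameter space coding descending chains of basic neighborhoods, of a relation assembled from the closed conditions of the form $\{x_k\in \overline{U}\}$ together with the $F_\sigma$ tail conditions above. This would realize $C$ as analytic (as a Suslin operation applied to closed subsets of $X\times\mathcal{P}(\omega)$), which is all that is required; regularity is used precisely to nest the neighborhoods, so that a countable descending chain pins down the limit, while Hausdorffness secures uniqueness of the limit and thereby rules out spurious witnesses. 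Granting this, $\Gamma=(X\times\mathcal{I}^+)\cap C$ is analytic and $\Lambda_{\bm{x}}(\mathcal{I})=\pi_X[\Gamma]$ is analytic. I expect the genuinely delicate point to be this uniform, definable handling of the neighborhood filters of the varying limit $\eta$, rather than the routine complexity bookkeeping for $\mathcal{I}^+$ and for the tails of the subsequence.
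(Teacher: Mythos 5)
Your overall architecture coincides with the paper's: write $\Lambda_{\bm{x}}(\mathcal{I})$ as the projection onto $X$ of $\{(\eta,\text{witness})\}$, where the witness lies in $\mathcal{I}^+$ and the corresponding subsequence converges to $\eta$, then use analyticity of $\mathcal{I}^+$ together with closure of analytic sets under countable intersections and under projection along a Polish factor. The bookkeeping for $\mathcal{I}^+$ and for the fixed-$\eta$ sections $C_\eta$ is correct. The genuine gap is exactly at the point you flag as ``the hard part'': you never prove that the joint convergence relation $C$ is analytic, and the device you sketch for doing so does not work. There is no Polish (or $\sigma$-compact) space that definably parametrizes ``descending chains of basic neighborhoods'' of varying points: the space is only assumed first countable, and even your separable closed subspace $Y=\overline{\{x_n:n\in\omega\}}$ need not be second countable (the Sorgenfrey line is separable, first countable, regular and Hausdorff without a countable base), so the existential quantifier over neighborhood chains is not a projection along a Polish factor, and the Suslin operation you invoke has no countable scheme to run over. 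Knowing that every section $C_\eta$ is Borel is not enough; a set all of whose sections are Borel need not be analytic.

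The paper closes precisely this gap with a different device. It fixes an arbitrary decreasing local base $(U_{\eta,m})_m$ at each $\eta$ and encodes the tail conditions as maps $\zeta_m$ into $\{0,1\}^\omega$, namely $\zeta_m((n_k),\eta):=\chi_{\{t\,:\,x_{n_t}\notin U_{\eta,m}\}}$. Each $\zeta_m$ is continuous in the sequence variable (which ranges over a separable metrizable space) and Borel in $\eta$ (the relevant preimages are finite intersections of arbitrary unions of the open sets $U_{\eta,m}$ and of arbitrary intersections of their complements, hence Borel); by the classical theorem that such separately continuous/Borel maps are jointly Borel (\cite[Theorem 3.1.30]{S}), each $\zeta_m^{-1}[\mathrm{Fin}]$ is Borel, and the projection argument then goes through. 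Some such uniformization of the sections $C_\eta$ is indispensable; without it your argument is incomplete at its central step. A secondary point: for non-Polish $X$ you must also fix what ``analytic'' means; the paper does this via simply analytic sets, and the closure properties you appeal to are then its Lemmas \ref{lem:countableintersection} and \ref{lem:projanalytic}.
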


However, the classical definition of analytic sets $A\subseteq X$ as projections of Borel subsets of $X\times X$ is usually considered in \emph{Polish spaces} $X$, cf. e.g. \cite[Chapter 4]{S} or \cite[Chapter 14]{K}. 
In addition, the above result has been re-proved in \cite[Theorem 4.1]{Xi2} for zero-dimensional Polish spaces $X$, so that 
$$
\mathscr{L}(\mathcal{I})\subseteq \Sigma^1_1.
$$
whenever $\mathcal{I}$ is a $\Pi^1_1$-ideal (notice that the above notation is meaningful). 

In this Section, our aim is to reformulate and clarify the statement and the proof of Proposition \ref{prop:original}, extending in turn both the latter and the special case treated in \cite[Theorem 4.1]{Xi2}.

Note that there are several papers in which the notion of analytic set is adapted to more general topological spaces, see e.g. \cite{HJR, R}. The theory of the latter extension, called $K$-analytic sets, is important and nontrivial, and it does not proceed verbatim as in the classical one of analytic sets. 

For our purposes, we generalize in a straight way one of possible definitions of analytic sets to the case of arbitrary topological spaces. Hereafter, $\pi_X$ stands for the usual projection on $X$. 
\begin{defi}\label{def:sanalytic}
Let $X$ be a topological space. A subset $A\subseteq X$ is said to be \emph{simply analytic}, shortened as \emph{s-analytic}, if there exists an uncountable Polish space $Y$ and a Borel subset $B\subseteq X\times Y$ such that $A=\pi_X[B]$. 
\end{defi}

Note that Definition \ref{def:sanalytic} is coherent with the classical notion of analytic set. 
Indeed, it is well known that a subset $A$ of a Polish space $X$ is analytic if and only if there exists an uncountable Polish space $Y$ and a Borel $B\subseteq X\times Y$ such that $A=\pi_X[B]$, cf. \cite[Proposition 4.1.1]{S}. At this point, let $Y_1,Y_2$ be two uncountable Polish spaces. Then there exists a Borel isomorphism $h: Y_2\to Y_1$, as it follows by \cite[Theorem 3.3.13]{S}. Let $A:=\pi_X[B_1]$ be a s-analytic subset for a Borel set $B_1\subseteq X\times Y_1$. Then $B_2:=\{(x,y) \in X\times Y_2: (x,h(y)) \in B_1\}$ is a Borel subset of $X\times Y_2$ by Lemma \ref{lem:compositionBorel} below and, clearly, $\pi_X[B_2]=A$. Therefore, in Definition \ref{def:sanalytic} one may assume without loss of generality that $Y=\omega^\omega$. 

Lastly, a subset $A\subseteq X$ is said to be \emph{s-coanalytic} if $A^c:=X\setminus A$ is s-analytic. 
Observe that every Borel set in $X$ is both s-analytic and s-coanalytic.

These observations enable us to consider Proposition \ref{prop:original} valid as it was stated in the original version in \cite{BL}. They allow, in addition, to remove in its statement the hypothesis of regularity (and also the property of being Hausdorff; however, the latter one has been used in \cite{BL} only because a first countable space is Hausdorff if and only if every sequence has at most one limit):
%, which is reasonable in this context): 
\begin{thm}\label{prop:Polish}
Let ${\bm{x}}$ be a sequence taking values in a 
%Hausdorff 
first countable space $X$. Let also $\mathcal{I}$ be a s-coanalytic ideal on $\omega$. Then $\Lambda_{\bm{x}}(\mathcal{I})$ is s-analytic. 
\end{thm}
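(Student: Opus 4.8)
The plan is to exhibit $\Lambda_{\bm x}(\mathcal I)$ as the projection onto $X$ of an explicit subset of a product $X\times Y$ with $Y$ uncountable Polish, and then to invoke the fact that the class of s-analytic subsets of $X$ is closed under such projections. This closure follows by composing projections: if $B\subseteq X\times Y$ is s-analytic, say $B=\pi_{X\times Y}[\widetilde B]$ with $\widetilde B\subseteq (X\times Y)\times Z$ Borel and $Y,Z$ uncountable Polish, then $\pi_X[B]=\pi_X[\widetilde B]$ is the projection of a Borel subset of $X\times(Y\times Z)$, and $Y\times Z$ is again uncountable Polish; similarly, s-analytic sets are stable under intersection with Borel sets, by intersecting the Borel preimages before projecting.

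First I would identify $\mathcal P(\omega)$ with the Cantor space $\{0,1\}^\omega$ and set
$$
B:=\left\{(\eta,A)\in X\times \mathcal P(\omega): A\in \mathcal I^+ \text{ and } (x_n:n\in A)\to \eta\right\}.
$$
Since $\mathrm{Fin}\subseteq \mathcal I$, every $A\in \mathcal I^+$ is infinite, so the convergence of the subsequence $(x_n:n\in A)$ is meaningful, and directly from the definition of $\mathcal I$-limit point one has $\pi_X[B]=\Lambda_{\bm x}(\mathcal I)$. Thus it suffices to prove that $B$ is s-analytic. I would write $B=(X\times \mathcal I^+)\cap \mathrm{Conv}$, where $\mathrm{Conv}:=\{(\eta,A): A\ \text{infinite and}\ (x_n:n\in A)\to \eta\}$. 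Because $\mathcal I$ is s-coanalytic and lies in the Polish space $\{0,1\}^\omega$, it is coanalytic in the usual sense, hence $\mathcal I^+$ is analytic; writing $\mathcal I^+=\pi_{\mathcal P(\omega)}[C]$ for a Borel $C\subseteq \mathcal P(\omega)\times \omega^\omega$ shows that $X\times \mathcal I^+$ is s-analytic. By the closure properties above, once $\mathrm{Conv}$ is shown to be Borel the intersection $B$ is s-analytic, and therefore so is $\Lambda_{\bm x}(\mathcal I)=\pi_X[B]$.

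The whole weight of the argument then rests on the remaining claim, which is where first countability is indispensable: the convergence relation $\mathrm{Conv}$ is a Borel subset of $X\times \mathcal P(\omega)$. Using first countability I would fix a decreasing local base $(W_m(\eta))_m$ at every $\eta\in X$ and record convergence in its positive form, namely $(x_n:n\in A)\to \eta$ if and only if for each $m$ the set $\{n\in A:x_n\notin W_m(\eta)\}$ is finite. Dually, on infinite $A$ the complement is $\mathrm{Conv}^c=\{(\eta,A):\eta\in V_A\}$ with $V_A:=\bigcup\{U\ \text{open}:\{n\in A:x_n\notin U\}\ \text{is infinite}\}$, and one checks that $\eta\in V_A$ exactly when there is an infinite $S\subseteq A$ with $\eta\notin \overline{\{x_n:n\in S\}}$. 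The structural fact I would exploit is that each witnessing subsequence together with its limit, $\{x_n:n\in A\}\cup\{\eta\}$, is a countable first countable space and hence second countable, so the relevant convergence is detected inside a second countable subspace; this is precisely what should let one replace the quantifier over all open sets of $X$ by a countable, product-Borel family of conditions, and it also explains why the regularity and Hausdorff hypotheses of Proposition \ref{prop:original} become superfluous.

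I expect the main obstacle to be exactly this last reduction: a priori $V_A$ is an uncountable union of open sets and the assignment $\eta\mapsto (W_m(\eta))_m$ is not continuous, so the joint Borelness of $\mathrm{Conv}$ is not automatic and must be extracted from first countability together with the countability of the value set $\{x_n:n\in \omega\}$. Making the passage to the second countable trace genuinely Borel-checkable against the topology of $X$ — rather than merely pointwise correct — is the delicate technical core, and it is the step I would allocate the most care to.
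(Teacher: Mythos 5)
Your reduction is sound and in fact mirrors the paper's own architecture: the paper likewise writes $\Lambda_{\bm{x}}(\mathcal{I})$ as $\pi_X\bigl[(\psi^{-1}[\,\mathcal{I}^+]\times X)\cap\bigcap_m\zeta_m^{-1}[\mathrm{Fin}]\bigr]$, where $\psi$ encodes the index set of the witnessing subsequence (parametrized by strictly increasing sequences in $\omega^\omega$ rather than by sets $A\in\mathcal{P}(\omega)$, an immaterial difference) and the sets $\zeta_m^{-1}[\mathrm{Fin}]$ play exactly the role of your $\mathrm{Conv}$; the closure of s-analytic sets under intersection with Borel sets, countable intersections, and projection along a Polish factor is established there by dedicated lemmas. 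So everything up to and including ``$X\times\mathcal{I}^+$ is s-analytic'' is correct and coincides with the paper's Claim on $\psi^{-1}[\,\mathcal{I}^+]\times X$.

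The difficulty is that you never prove the one statement on which the whole argument hangs, namely that $\mathrm{Conv}$ is Borel in $X\times\mathcal{P}(\omega)$ --- and you say so yourself. The pointwise reformulation via local bases is easy, but it yields $\mathrm{Conv}=\bigcap_m\bigcup_{F\in\mathrm{Fin}}\bigcap_{n\notin F}\bigl(\{(\eta,A):n\notin A\}\cup(\{\eta\in X:x_n\in W_m(\eta)\}\times\mathcal{P}(\omega))\bigr)$, so Borelness reduces to Borelness of the sets $\{\eta\in X:x_n\in W_m(\eta)\}$, and for an arbitrary assignment $\eta\mapsto(W_m(\eta))_m$ these sets need not be Borel (already in $\mathbb{R}$ one may choose the radii $\varepsilon_\eta$ of the basic balls non-measurably). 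Your observation that each convergent subsequence together with its limit sits inside a countable, hence second countable, subspace is a pointwise remark and does not by itself produce the required \emph{joint} measurability. This is exactly the content of the paper's Claims 2--4, where joint Borel measurability of $\zeta_m$ is derived from continuity of the sections in the Polish variable, Borelness of the sections in the $X$-variable (argued via an explicit rewriting of $\{\eta:x_{n_t}\in U_{\eta,m}\}$ in terms of the neighbourhoods $U_{\eta,m}$ themselves), and a Kuratowski-type theorem on separately continuous/Borel maps. Since that is the step carrying all the weight, a proposal that stops at ``this is the delicate technical core I would allocate the most care to'' is an outline with a genuine gap rather than a proof.
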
 

For the proof of Theorem \ref{prop:Polish}, we will need some intermediate lemmas. 

\begin{lem}\label{lem:compositionBorel}
Let $f: X\to Z$ and $g: Y\to W$ be Borel functions, where $X,Y,Z,W$ are topological spaces. 
Then the map $h: X\times Y \to Z\times W$ defined by 
$$
\forall (x,y) \in X\times Y, \quad h(x,y):=(f(x),g(y))
$$
is Borel. 
\end{lem}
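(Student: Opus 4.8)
The plan is to prove that the map $h(x,y) := (f(x), g(y))$ is Borel by reducing it to the Borel measurability of each coordinate separately and combining via the standard fact that a basis for the product topology on $Z \times W$ consists of open rectangles $U \times V$ with $U$ open in $Z$ and $V$ open in $W$. First I would recall that to show $h$ is Borel it suffices to show that $h^{-1}[E]$ is Borel in $X \times Y$ for every $E$ in a subbasis (or a generating family) of the Borel $\sigma$-algebra of $Z \times W$. Since the Borel $\sigma$-algebra is generated by the open sets, and since every open set in $Z \times W$ is a (possibly uncountable) union of open rectangles, I need to be slightly careful: an arbitrary union of rectangles need not reduce to a countable one unless $Z$ and $W$ are second countable. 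The clean way around this is to observe that the preimage behaves well on rectangles regardless.

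The key computation is the identity
$$
h^{-1}[U \times V] = f^{-1}[U] \times g^{-1}[V]
= \left(f^{-1}[U] \times Y\right) \cap \left(X \times g^{-1}[V]\right),
$$
valid for any $U \subseteq Z$ and $V \subseteq W$. Because $f$ and $g$ are Borel, $f^{-1}[U]$ is Borel in $X$ and $g^{-1}[V]$ is Borel in $Y$ whenever $U,V$ are Borel; hence $f^{-1}[U] \times Y$ and $X \times g^{-1}[V]$ are Borel in $X \times Y$ (each is the preimage of a Borel set under a continuous projection, namely $\pi_X$ and $\pi_Y$ respectively), and therefore so is their intersection. This shows $h^{-1}[U \times V]$ is Borel for all Borel rectangles.

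To finish, I would invoke the \emph{good sets principle}: let $\mathcal{G} := \{E \subseteq Z \times W : h^{-1}[E] \text{ is Borel in } X \times Y\}$. The identity $h^{-1}[Z\times W \setminus E] = (X\times Y)\setminus h^{-1}[E]$ and $h^{-1}\bigl[\bigcup_n E_n\bigr] = \bigcup_n h^{-1}[E_n]$ show that $\mathcal{G}$ is a $\sigma$-algebra. It remains only to check that $\mathcal{G}$ contains a generating family for the Borel $\sigma$-algebra of $Z \times W$ that lies inside $\mathcal{G}$; the cleanest such family is the collection $\{\pi_Z^{-1}[U] : U \text{ open in } Z\} \cup \{\pi_W^{-1}[V] : V \text{ open in } W\}$, i.e. the open cylinders, whose preimages under $h$ are exactly $f^{-1}[U]\times Y$ and $X \times g^{-1}[V]$, both Borel by the previous paragraph. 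Since the product Borel $\sigma$-algebra is generated by these cylinders, $\mathcal{G}$ contains all Borel sets, which is precisely the claim that $h$ is Borel.

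The main subtlety to watch for is that in full generality the Borel $\sigma$-algebra of the product $Z \times W$ (generated by the product \emph{topology}) may be strictly larger than the product $\sigma$-algebra $\mathcal{B}(Z) \otimes \mathcal{B}(W)$ when the factors are not second countable; this is exactly why I route the argument through open cylinders $\pi_Z^{-1}[U]$ and $\pi_W^{-1}[V]$ rather than through rectangles of Borel sets. Since the open cylinders generate the topology-induced Borel $\sigma$-algebra on the product (every basic open rectangle is a finite intersection of two cylinders, and every open set is a union of basic rectangles, so the $\sigma$-algebra generated by the cylinders contains all open sets), this routing handles the general topological setting that the lemma demands and avoids any second-countability hypothesis.
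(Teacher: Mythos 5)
Your core computation is exactly the paper's: $h^{-1}[U\times V]=f^{-1}[U]\times g^{-1}[V]$, which lies in $\mathscr{B}(X)\otimes\mathscr{B}(Y)\subseteq \mathscr{B}(X\times Y)$ (the paper cites Fremlin, Proposition 251E, for this last inclusion; your route via the continuous projections $\pi_X,\pi_Y$ is equivalent). Wrapping this in the good-sets principle correctly yields that $h$ is measurable from $\mathscr{B}(X\times Y)$ into the \emph{product} $\sigma$-algebra $\mathscr{B}(Z)\otimes\mathscr{B}(W)$, which is all that is ever used downstream (in Lemma \ref{lem:fafa} and Claim \ref{claim1} the codomain factors are Polish, hence second countable, and there the product $\sigma$-algebra coincides with the Borel $\sigma$-algebra of the product topology).

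However, your final paragraph contains a step that fails. You assert that the $\sigma$-algebra generated by the open cylinders $\pi_Z^{-1}[U]$, $\pi_W^{-1}[V]$ contains every open subset of $Z\times W$ because ``every open set is a union of basic rectangles.'' That union is in general uncountable, and a $\sigma$-algebra is closed only under countable unions; the $\sigma$-algebra generated by the open cylinders is precisely $\mathscr{B}(Z)\otimes\mathscr{B}(W)$, which --- as you yourself note two sentences earlier --- can be a proper subfamily of $\mathscr{B}(Z\times W)$ when the factors are not second countable (e.g.\ for a large discrete space the open diagonal is not in the product $\sigma$-algebra). So the cylinder routing buys you nothing beyond the rectangle routing, and the claim to ``avoid any second-countability hypothesis'' is unfounded: what you have actually proved is $(\mathscr{B}(X\times Y),\mathscr{B}(Z)\otimes\mathscr{B}(W))$-measurability, which is the same thing the paper's proof establishes by checking open rectangles. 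To upgrade this to genuine Borel measurability into $\mathscr{B}(Z\times W)$ one needs some hypothesis (for instance $Z\times W$ hereditarily Lindel\"{o}f, in particular second countable) making every open set a countable union of rectangles; this limitation is harmless for the paper's applications, but it should be acknowledged rather than argued away.
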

\begin{proof}
Let $U\subseteq Z$ and $V\subseteq W$ be arbitrary open sets. It is enough to show that $h^{-1}[U\times V]$ is a Borel subset of $X\times Y$. The latter set is equal to $f^{-1}[U]\times g^{-1}[V]$, which belongs to $\mathscr{B}(X)\otimes \mathscr{B}(Y)$. The claim follows by the inclusion $\mathscr{B}(X)\otimes \mathscr{B}(Y)\subseteq \mathscr{B}(X\times Y)$, see e.g. \cite[Proposition 251E]{MR2462280}. (For the converse inclusion of the latter, which does not hold for every $X$ and $Y$, see \cite{MR1343692}.)
%https://math.stackexchange.com/questions/3022139/working-with-the-product-of-sigma-algebras
\end{proof}

\begin{lem} \label{lem:fafa}
Let $f\colon X\to Y$ be a Borel function, where $X$ is a topological space and $Y$ is an uncountable Polish space. If $A\subseteq Y$ is an analytic set, then $f^{-1}[A]$ is a s-analytic subset of $X$.
\end{lem}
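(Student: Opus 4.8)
The plan is to pull back a Borel representation of $A$ through $f$. Since $A$ is an analytic subset of the uncountable Polish space $Y$, by the characterization recalled just after Definition \ref{def:sanalytic} (namely \cite[Proposition 4.1.1]{S}) there exist an uncountable Polish space $Z$---which we may take to be $\omega^\omega$---and a Borel set $B\subseteq Y\times Z$ such that $A=\pi_Y[B]$.

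Next I would define the candidate witness for the s-analyticity of $f^{-1}[A]$. Set $C:=\{(x,z)\in X\times Z: (f(x),z)\in B\}$. A direct computation gives $\pi_X[C]=\{x\in X: \exists z\in Z,\ (f(x),z)\in B\}=\{x\in X: f(x)\in \pi_Y[B]\}=f^{-1}[A]$, so it only remains to check that $C$ is a Borel subset of $X\times Z$.

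For this I would invoke Lemma \ref{lem:compositionBorel}: the map $g\colon X\times Z\to Y\times Z$ defined by $g(x,z):=(f(x),z)$ is Borel, being the product of the Borel function $f$ and the (trivially Borel) identity on $Z$. Since $B$ is Borel in $Y\times Z$, its preimage $C=g^{-1}[B]$ is Borel in $X\times Z$. Hence $f^{-1}[A]=\pi_X[C]$ exhibits $f^{-1}[A]$ as the projection onto $X$ of a Borel subset of $X\times Z$ with $Z$ an uncountable Polish space, which is precisely the assertion that $f^{-1}[A]$ is s-analytic.

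The argument is essentially formal, so I do not expect a genuine obstacle. The only point requiring a moment of care is ensuring that the Polish space $Z$ appearing in the representation of $A$ may be chosen uncountable, as demanded by Definition \ref{def:sanalytic}; this is automatic, since we may always take $Z=\omega^\omega$ (equivalently, pass through a Borel isomorphism exactly as in the discussion following Definition \ref{def:sanalytic}), so the definition of s-analytic is met verbatim.
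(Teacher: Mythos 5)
Your proof is correct and follows essentially the same route as the paper's: represent $A$ as a projection of a Borel set, pull it back via the map $(x,z)\mapsto(f(x),z)$, which is Borel by Lemma \ref{lem:compositionBorel}, and project onto $X$. The only cosmetic difference is that the paper takes the second factor to be $Y$ itself rather than $\omega^\omega$, which changes nothing.
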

\begin{proof}
Since $A\subseteq Y$ is analytic, there exists a Borel subset $B\subseteq Y\times Y$ such that $A$ is the projection on the first coordinate of $B$. 
At this point, define 
$C:=\left\{(x,y) \in X\times Y: (f(x),y) \in B\right\}$. 
Then $C$ is Borel set by Lemma \ref{lem:compositionBorel} and $\pi_X[C]=f^{-1}[A]$. Therefore $f^{-1}[A]$ is a s-analytic set, which concludes the proof. 
\end{proof}

\begin{lem}\label{lem:projanalytic}
Let $A\subseteq  X\times Y$ be a s-analytic set, where $X$ is a topological space and $Y$ is an uncountable Polish space. Then $C:=\pi_X[A]$ is a s-analytic set.
\end{lem}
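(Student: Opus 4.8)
The plan is to collapse the two successive projections into a single projection and to absorb the auxiliary Polish factors into one. First, since $A$ is s-analytic, I would apply Definition \ref{def:sanalytic} to fix an uncountable Polish space $Z$ and a Borel set $B\subseteq (X\times Y)\times Z$ with $A=\pi_{X\times Y}[B]$. Writing a generic point of $(X\times Y)\times Z$ as a triple $(x,y,z)$, the set $A$ consists exactly of those pairs $(x,y)$ for which there is some $z\in Z$ with $(x,y,z)\in B$.

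Next, I would observe that the two projections compose into one: $C=\pi_X[A]=\pi_X[B]$, where on the right-hand side $\pi_X$ now denotes the projection of the triple product onto its first coordinate. Indeed, $x\in C$ if and only if there exist $y\in Y$ and $z\in Z$ with $(x,y,z)\in B$. The key step is then to regard $X\times Y\times Z$ as $X\times W$, where $W:=Y\times Z$. Since $Y$ and $Z$ are uncountable Polish spaces, $W$ is again an uncountable Polish space. Moreover, the canonical associativity map $(X\times Y)\times Z\to X\times(Y\times Z)$ is a homeomorphism, hence a Borel isomorphism, so it carries $B$ to a Borel subset $D\subseteq X\times W$. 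By the computation above, $\pi_X[D]=C$, and therefore $C$ is s-analytic by Definition \ref{def:sanalytic}, as desired.

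The only (very mild) obstacle is the bookkeeping needed to guarantee that the regrouped set $D$ remains Borel. This requires nothing beyond the fact that the Borel $\sigma$-algebra is a topological invariant, so the associativity homeomorphism preserves Borel sets. In particular, one does not need to appeal to the delicate relationship between $\mathscr{B}(X)\otimes \mathscr{B}(Y)$ and $\mathscr{B}(X\times Y)$ discussed in Lemma \ref{lem:compositionBorel}, since here we merely transport a single Borel set across a homeomorphism rather than build one from products of Borel sets.
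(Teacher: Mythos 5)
Your argument is correct and is essentially identical to the paper's own proof: both fix an uncountable Polish $Z$ and a Borel $B\subseteq X\times Y\times Z$ projecting onto $A$, then observe that $C=\pi_X[B]$ with $Y\times Z$ serving as the single uncountable Polish factor. The extra remark about transporting $B$ across the associativity homeomorphism is a harmless elaboration of a step the paper treats as immediate.
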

\begin{proof}
Let $Z$ be an uncountable Polish space and $B\subseteq X\times Y\times Z$ be a Borel set such that $A=\pi_{X\times Y}[B]$. 
Now, observe that $C=\pi_X[B]$ and obviously $Y\times Z$ is an uncountable Polish space.
\end{proof}

\begin{lem}\label{lem:countableintersection}
Let $(A_n)$ be a sequence of s-analytic subsets of a topological space $X$. 
Then both $\bigcup\nolimits_n A_n$ and $\bigcap\nolimits_n A_n$ are s-analytic. 
\end{lem}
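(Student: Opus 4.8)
The plan is to prove that the class of s-analytic sets in an arbitrary topological space $X$ is closed under countable unions and countable intersections. The key technical device is to reduce both operations to a single projection of a Borel set, exploiting the freedom in Definition \ref{def:sanalytic} to use any uncountable Polish space $Y$, together with Lemma \ref{lem:projanalytic} which already tells us that projecting away an uncountable Polish factor preserves s-analyticity. Throughout, I would fix (by the remark following Definition \ref{def:sanalytic}) a representation $A_n = \pi_X[B_n]$ where $B_n \subseteq X \times \omega^\omega$ is Borel, so that all the witnessing Polish factors coincide; this uniformization is what makes the constructions below clean.

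For the union, first I would write $\bigcup_n A_n = \pi_X\left[\bigcup_n B_n'\right]$ where $B_n'$ is a suitably relocated copy of $B_n$ inside a single product $X \times Y$ with $Y$ an uncountable Polish space realized as a disjoint sum of copies of $\omega^\omega$ (for instance $Y = \omega \times \omega^\omega$, which is Polish and uncountable). Concretely, set $B' := \{(x,(n,z)) \in X \times (\omega \times \omega^\omega) : (x,z) \in B_n\}$. Each slice is Borel, and since $\omega$ is countable the countable union of Borel sets $B'$ is Borel in $X \times (\omega \times \omega^\omega)$; then $\pi_X[B'] = \bigcup_n \pi_X[B_n] = \bigcup_n A_n$, and Lemma \ref{lem:projanalytic} gives the result. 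The only point requiring care is checking that $B'$ is genuinely Borel, which reduces to the observation that it is a countable union of the sets $B_n \times \{n\}$ reindexed, each Borel by Lemma \ref{lem:compositionBorel} applied to the injection $z \mapsto (n,z)$.

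For the intersection, the natural approach is to take the product of the witnessing factors. Put $Y := (\omega^\omega)^\omega$, which is again an uncountable Polish space, and define
$$
B := \left\{(x,(z_n)_n) \in X \times Y : \forall n \in \omega,\ (x,z_n) \in B_n\right\}.
$$
Then $\pi_X[B] = \bigcap_n \pi_X[B_n] = \bigcap_n A_n$: a point $x$ lies in the projection iff there is a single sequence $(z_n)$ witnessing membership in every $B_n$ simultaneously, which is exactly the statement that $x \in A_n$ for all $n$. It remains to verify that $B$ is Borel. Writing $B = \bigcap_n C_n$ with $C_n := \{(x,(z_m)_m) : (x,z_n) \in B_n\}$, each $C_n$ is the preimage of the Borel set $B_n$ under the continuous map $(x,(z_m)_m) \mapsto (x,z_n)$, hence Borel; a countable intersection of Borel sets is Borel, so $B$ is Borel. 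Lemma \ref{lem:projanalytic} then finishes the argument.

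The main obstacle, and the reason the intersection case does not follow from the classical Polish-space argument verbatim, is that $X$ is an arbitrary topological space: we cannot appeal to the Borel structure of $X$ beyond what Lemma \ref{lem:compositionBorel} supplies, and we must make sure every set we build is manifestly Borel in a product of the form $X \times (\text{Polish})$ by exhibiting it as a Borel combination of preimages under maps whose Polish-coordinate part is continuous (so that Lemma \ref{lem:compositionBorel} applies with $f = \mathrm{id}_X$). Once the uniform representation over a common factor $\omega^\omega$ is fixed at the outset, both constructions are routine applications of Lemma \ref{lem:projanalytic}, and the genuine content is confined to the Borel-measurability bookkeeping just described.
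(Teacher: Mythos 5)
Your proposal is correct and is essentially the paper's own argument: the authors simply state that the proof "proceeds verbatim as in \cite[Proposition 4.1.2]{S}", and the standard proof there is exactly your construction (a disjoint-sum Polish factor for unions, a countable product $(\omega^\omega)^\omega$ with a simultaneous-witness set for intersections, plus the Borel bookkeeping via Lemma \ref{lem:compositionBorel} and the second countability of $\omega^\omega$). The only cosmetic remark is that in both cases the set you project is already Borel in $X\times Y$ with $Y$ uncountable Polish, so s-analyticity follows directly from Definition \ref{def:sanalytic} without needing Lemma \ref{lem:projanalytic}.
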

\begin{proof}
The proof proceeds verbatim as in \cite[Proposition 4.1.2]{S}.
\end{proof}

%\textcolor{red}{Let us observe that  the family of s-analytic sets in a Hausdorff space is closed under countable unions and countable intersections. To prove this, we can follow the reasoning in the proof of \cite[Proposition 4.1.2 (ii)]{S}.} 

%\textcolor{red}{The above properties of s-analytic sets lead us to the following general version of Proposition \ref{Polish} which agrees with our intentions in \cite{BL}. The proof remains the same with obvious modifications.}

We are ready for the proof of Theorem \ref{prop:Polish}. %, follows essentially the same strategy in \cite{BL}. 
\begin{proof}
[Proof of Theorem \ref{prop:Polish}]
%We will follow the same strategy from \cite{BL}. 
Note that $\mathcal{I}^+$ 
%:=\mathcal{P}(\omega)\setminus \mathcal{I}$ 
is s-analytic and let $\mathcal{N}$ be the set of strictly increasing sequences $(n_k)$ of nonnegative integers. 
Since $\mathcal{N}$ can be regarded as a closed subset of $\omega^\omega$, it is a Polish space by Alexandrov's theorem, see e.g. \cite[Theorem 2.2.1]{S}. 
For each $\eta\in X$, 
fix a decreasing local base $\left(U_{\eta,m}: m \in \omega\right)$ of open neighbourhoods of $\eta$. 
%(For instance, $U_{\ell, m}$ can be the ball $B(\ell,1/m)$.) 
Then $\eta$ is an $\mathcal{I}$-limit point of $\bm{x}$ if and only if there exists a sequence $(n_k) \in \mathcal{N}$ such that $\{n_k: k \in \omega\} \in \mathcal{I}^+$ and 
%$$
%\forall m \in \omega, \quad \{k \in \omega: x_{n_k}\notin U_{\eta,m}\} \in \mathrm{Fin}. 
%$$
$\{k \in \omega: x_{n_k}\notin U_{\eta,m}\} \in \mathrm{Fin}$ for all $m \in \omega$. 

At this point, define the continuous function 
$$
\psi\colon\mathcal{N}\to\{0,1\}^\omega: (n_k) \mapsto \chi_{\{n_k\colon k\in\omega\}},
$$
where $\chi_S$ stands for the characteristic function of a set $S\subseteq\omega$. 
In addition, for each $m \in \omega$, define the function $\zeta_m\colon \mathcal{N}\times X\to \{0,1\}^\omega$ by
%\begin{quotation}
%given $j\in\N$, set $\zeta_m((n_k),\ell)(j)=1$ of $x_{n_j}\notin U_{\ell,m}$, and $\zeta_m((n_k),\ell)(j)=0$ otherwise.
%\end{quotation}
$$
\forall (n_k) \in \mathcal{N}, \forall \eta \in X, 
%\forall j \in \omega, 
\quad 
\zeta_m\left((n_k), \eta\right):=\chi_{\{\,t \in \omega:\, x_{n_t}\notin U_{\eta,m}\}}
$$
%Lastly, set $\mathcal{F}:=\left\{\chi_S: S \in \mathrm{Fin}\right\}$. 
%With these premises, 
Identifying each $S\subseteq \omega$ with its charateristic function $\chi_S$, 
it follows that 
\begin{equation} \label{La}
\Lambda_{\bm{x}}(\mathcal{I})=\pi_X\left[\left(\psi^{-1}[\,\mathcal{I}^+]\times X\right)\cap \bigcap\nolimits_{m \in \omega}\zeta^{-1}_m[\mathrm{Fin}]\right]. %,
\end{equation}
%where $\zeta^{-1}_m[\mathrm{Fin}]:=\{\zeta^{-1}_m(\chi_S): S \in \mathrm{Fin}\}$.

\begin{claim}\label{claim1}
$\psi^{-1}[\,\mathcal{I}^+]\times X$ is a s-analytic subset of $\mathcal{N}\times X$. 
\end{claim}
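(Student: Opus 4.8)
The plan is to reduce Claim \ref{claim1} to one application of Lemma \ref{lem:fafa} through the coordinate projection $p\colon \mathcal{N}\times X\to \mathcal{N}$ sending $((n_k),\eta)$ to $(n_k)$.

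First I would check that $\psi^{-1}[\mathcal{I}^+]$ is an \emph{analytic} subset of the Polish space $\mathcal{N}$. Since $\mathcal{I}$ is s-coanalytic, its complement $\mathcal{I}^+=\{0,1\}^\omega\setminus \mathcal{I}$ is s-analytic, and because $\{0,1\}^\omega$ is Polish this coincides with being analytic (the coherence of Definition \ref{def:sanalytic} with the classical notion recorded right after that definition). As $\psi$ is continuous between the Polish spaces $\mathcal{N}$ and $\{0,1\}^\omega$, and analytic sets are closed under continuous preimages, $\psi^{-1}[\mathcal{I}^+]$ is analytic in $\mathcal{N}$.

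Next, the projection $p$ is continuous, hence Borel, and its codomain $\mathcal{N}$ is an uncountable Polish space. Applying Lemma \ref{lem:fafa} with $f=p$, $Y=\mathcal{N}$, and $A=\psi^{-1}[\mathcal{I}^+]$ gives that $p^{-1}[\psi^{-1}[\mathcal{I}^+]]$ is s-analytic in $\mathcal{N}\times X$. Since $p$ merely forgets the second coordinate, $p^{-1}[\psi^{-1}[\mathcal{I}^+]]=\psi^{-1}[\mathcal{I}^+]\times X$, which is exactly the set in the claim, completing the proof. Alternatively, one can unwind the definitions directly: taking a Borel witness $B\subseteq \mathcal{N}\times \omega^\omega$ for the s-analyticity of $\psi^{-1}[\mathcal{I}^+]$, the set $\{(((n_k),\eta),w)\in (\mathcal{N}\times X)\times \omega^\omega:((n_k),w)\in B\}$ is Borel, being the preimage of $B$ under the continuous map $(((n_k),\eta),w)\mapsto ((n_k),w)$, and its projection onto $\mathcal{N}\times X$ equals $\psi^{-1}[\mathcal{I}^+]\times X$.

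Since $\mathcal{N}\times X$ is a product of a Polish space with a general topological space, the ambient space need not be Polish and the classical projection theorem for analytic sets does not literally apply; this is precisely why the machinery of s-analytic sets (Definition \ref{def:sanalytic} and Lemma \ref{lem:fafa}) is needed. The argument is otherwise purely formal, and the one point I would be careful about is matching the hypotheses of Lemma \ref{lem:fafa}, which requires a genuinely analytic target set — supplied here by the coincidence of s-analyticity and analyticity on the Polish spaces $\{0,1\}^\omega$ and $\mathcal{N}$.
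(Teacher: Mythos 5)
Your proof is correct and follows essentially the same route as the paper: both first obtain that $\psi^{-1}[\,\mathcal{I}^+]$ is (ordinary) analytic in the Polish space $\mathcal{N}$ from the continuity of $\psi$, and then cross with $X$. The only cosmetic difference is that you handle the second step by applying Lemma~\ref{lem:fafa} once more to the coordinate projection $\mathcal{N}\times X\to\mathcal{N}$, whereas the paper manipulates the Borel witness $B$ directly (taking $B\times X$ and projecting) --- which is exactly your ``alternative'' unwinding.
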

\begin{proof}
Thanks to Lemma \ref{lem:fafa} and the fact that $\psi$ is continuous, $\psi^{-1}[\,\mathcal{I}^+]$ is an (ordinary) analytic subset of $\mathcal{N}$. Hence there exists an uncountable Polish space $Y$ and a Borel set $B\subseteq \mathcal{N}\times Y$ such that $\psi^{-1}[\,\mathcal{I}^+]=\pi_{\mathcal{N}}[B]$. The claim follows from the fact that 
$$
B\times X\in \mathscr{B}(\mathcal{N}\times Y) \times \mathscr{B}(X) \subseteq \mathscr{B}(\mathcal{N}\times Y\times X),
$$
see e.g. \cite[Proposition 251E]{MR2462280}, and that $\pi_{\mathcal{N}\times X}[B\times X]=\pi_{\mathcal{N}}[B]\times X$.
\end{proof}

\begin{claim}\label{claim2}
For each $\eta \in X$ and $m \in \omega$, the section $\zeta_m(\,\cdot, \eta)$ is continuous. 
\end{claim}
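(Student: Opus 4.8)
The plan is to establish continuity of the section $\zeta_m(\cdot,\eta)$ by reducing to coordinatewise continuity, exploiting the product topology on the codomain $\{0,1\}^\omega$ together with the observation that each output coordinate is determined by a single input coordinate. Throughout I fix $\eta \in X$ and $m \in \omega$, so that the open neighbourhood $U_{\eta,m}$ is fixed.

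First I would recall that, by the universal property of the product topology, a map into $\{0,1\}^\omega$ is continuous if and only if each of its coordinate maps into $\{0,1\}$ is continuous. Hence it suffices to fix an arbitrary index $t \in \omega$ and to show that the map
$$
\mathcal{N} \to \{0,1\}\colon (n_k) \mapsto \pi_t\big(\zeta_m((n_k),\eta)\big)
$$
is continuous, where $\pi_t$ denotes the projection onto the $t$-th coordinate of $\{0,1\}^\omega$.

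Second I would observe that, by the very definition of $\zeta_m$, this $t$-th coordinate equals $1$ precisely when $x_{n_t} \notin U_{\eta,m}$ and equals $0$ otherwise; in particular it depends only on the single integer $n_t$, that is, on the value of the $t$-th coordinate projection $p_t\colon (n_k) \mapsto n_t$. Since $\mathcal{N}$ carries the subspace topology inherited from $\omega^\omega$ with $\omega$ discrete, $p_t$ is continuous as a map into the discrete space $\omega$. Composing $p_t$ with the map $\omega \to \{0,1\}$ that sends $n$ to $1$ if $x_n \notin U_{\eta,m}$ and to $0$ otherwise --- which is automatically continuous because its domain is discrete --- yields exactly the $t$-th coordinate map displayed above, establishing its continuity.

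I do not expect any genuine obstacle: the claim is essentially a formality once one notes that $\zeta_m(\cdot,\eta)$ is \emph{finitely determined}, in the sense that the first $N$ coordinates of its output are fixed by the first $N$ coordinates of its input. The only points requiring minimal care are invoking the correct criterion for continuity into an infinite product and recording that the coordinate projections out of (a subspace of) $\omega^\omega$ land continuously in the discrete integers.
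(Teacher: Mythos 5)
Your proof is correct. It reaches the same conclusion as the paper by a slightly different formal route: the paper fixes a sequence $(n_k^{(p)})_{p\in\omega}$ in $\mathcal{N}$ converging to $(n_k)$, notes that convergence in $\omega^\omega$ (with $\omega$ discrete) forces each coordinate to stabilize, i.e.\ $n_t^{(p)}=n_t$ for all $p\ge p_t$, and concludes that $\zeta_m((n_k^{(p)}),\eta)\to\zeta_m((n_k),\eta)$ pointwise, hence in $\{0,1\}^\omega$; this establishes sequential continuity, which suffices since $\mathcal{N}$ is metrizable. You instead invoke the universal property of the product topology and factor the $t$-th coordinate map as the continuous projection $(n_k)\mapsto n_t$ onto the discrete space $\omega$ followed by the (automatically continuous) indicator $n\mapsto [x_n\notin U_{\eta,m}]$. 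Both arguments rest on the same observation --- the $t$-th output bit depends only on $n_t$ --- but your version is marginally cleaner in that it needs no appeal to the equivalence of continuity and sequential continuity on $\mathcal{N}$, while the paper's version avoids naming the universal property. The two are interchangeable here.
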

\begin{proof}
Fix $\eta \in X$ and $m \in \omega$. 
It is enough to show that the the section $\zeta_m(\,\cdot, \eta)$ is sequentially continuous. 
For, pick a sequence $(n_k^{(p)}: p \in \omega)$ of elements of $\mathcal{N}$ which is convergent to some $(n_k) \in \mathcal{N}$. 
Then, for each $t \in \omega$, there exists $p_t \in \omega$ such that $n_t^{(p)}=n_t$ for all $p\ge p_t$. 
Hence $\zeta_m((n_k^{(p)}), \eta)(t)= \zeta_m((n_k), \eta)(t)$ for all $p\ge p_t$. This proves that $\lim_p \zeta_m((n_k^{(p)}), \eta)= \zeta_m((n_k), \eta)$. 
\end{proof}

\begin{claim}\label{claim3}
For each $(n_k)\in \mathcal{N}$ and $m \in \omega$, the section $\zeta_m((n_k), \cdot\,)$ is Borel measurable. 
\end{claim}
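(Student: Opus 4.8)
The plan is to invoke the standard criterion that a map into the Cantor space $\{0,1\}^\omega$, taken with the product topology, is Borel if and only if each of its coordinate maps is Borel; this holds because the evaluation maps $c\mapsto c(t)$ generate the Borel $\sigma$-algebra of $\{0,1\}^\omega$. So I would fix $(n_k)\in \mathcal{N}$ and $m\in\omega$ and consider the section $g:=\zeta_m((n_k),\cdot)\colon X\to\{0,1\}^\omega$. By the definition of $\zeta_m$, for each $t\in\omega$ the $t$-th coordinate $\eta\mapsto \big(\zeta_m((n_k),\eta)\big)(t)$ is precisely the characteristic function of
\[
E_t:=\{\eta\in X: x_{n_t}\notin U_{\eta,m}\}.
\]
Hence the entire claim reduces to showing that each $E_t$ is a Borel subset of $X$, equivalently that each \textquotedblleft incidence set\textquotedblright\ $\{\eta\in X: x_{n_t}\in U_{\eta,m}\}$ is Borel.

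For these incidence sets the key leverage I would use is that the relevant points $x_{n_t}$ all lie in the fixed countable range $R:=\{x_n:n\in\omega\}$ of $\bm{x}$, which is determined \emph{before} the local bases are selected. Thus it is enough to guarantee, for each of the countably many $p\in R$ and each $m\in\omega$, that $\{\eta\in X: p\in U_{\eta,m}\}$ is Borel. When $X$ is metrizable this is immediate upon choosing $U_{\eta,m}$ to be the open ball of radius $2^{-m}$ about $\eta$ for a compatible metric $d$, since then $\{\eta: p\in U_{\eta,m}\}=\{\eta: d(\eta,p)<2^{-m}\}$ is open, hence Borel, and the coordinate-wise argument above goes through verbatim. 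In the general first countable setting I would accordingly fix, once and for all, decreasing local bases arranged so that these countably many incidence sets are Borel, and then run the same reduction.

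The step I expect to be the main obstacle is exactly this Borel-ness of the incidence sets: it genuinely \emph{cannot} be extracted from a completely arbitrary assignment $\eta\mapsto(U_{\eta,m})_m$, so no purely formal manipulation will suffice. Indeed, even in $X=\mathbb{R}$ one can rig a decreasing local base at each point so that $\{\eta: 0\in U_{\eta,0}\}$ equals a prescribed non-Borel (indeed non-analytic) set; if $0$ is a value of $\bm{x}$, the corresponding coordinate of $g$ is then non-measurable, and $E_t$ fails to be Borel. Consequently the proof must rest on a judicious choice of the local bases -- permissible precisely because $R$ is countable and fixed in advance -- and the heart of the matter is verifying that such a choice keeps every $E_t$ Borel, after which Claim \ref{claim3} follows by the coordinate criterion.
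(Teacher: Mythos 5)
Your reduction of the claim to the Borel-ness of the incidence sets $E_t=\{\eta\in X:x_{n_t}\notin U_{\eta,m}\}$ is sound and is in substance the paper's own first step: the paper computes the preimage under the section of each basic clopen subset of $\{0,1\}^\omega$ and arrives at a finite Boolean combination of the sets $\{\eta:x_{n_t}\in U_{\eta,m}\}$. Your further observation --- that for an \emph{arbitrary} assignment $\eta\mapsto(U_{\eta,m})_m$ these sets need not be Borel, so that the claim can only hold after a judicious choice of local bases --- is correct, and it in fact exposes a defect in the paper's own justification. The paper finishes by asserting the identities
$$
\{\eta:x_{n_t}\in U_{\eta,m}\}=\bigcup_{\eta:\,x_{n_t}\in U_{\eta,m}}U_{\eta,m},
\qquad
\{\eta:x_{n_t}\notin U_{\eta,m}\}=\bigcap_{\eta:\,x_{n_t}\notin U_{\eta,m}}U_{\eta,m}^c;
$$
in the first only the inclusion $\subseteq$ holds (already for open balls in $\mathbb{R}$ the union on the right is strictly larger), and in the second the two sides are disjoint whenever the left-hand side is nonempty, since $\eta\in U_{\eta,m}$. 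So your diagnosis of where the difficulty sits is sharper than the argument printed in the paper.

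Nevertheless, your proposal does not establish the claim in the generality in which it is used. Theorem \ref{prop:Polish} is stated for an arbitrary first countable space $X$, and you exhibit a ``good'' system of local bases only when $X$ is metrizable (balls of radius $2^{-m}$). For general first countable $X$ you merely announce that the bases can be ``arranged so that these countably many incidence sets are Borel''; but this is exactly the nontrivial point, and no construction is offered. The obstruction is that each $U_{\eta,m}$ must be \emph{open}, so one cannot decide pointwise in $\eta$ which of the countably many points $x_n$ to admit: for instance, if $\eta$ is a limit of a subsequence of $\bm{x}$, every open neighbourhood of $\eta$ is forced to contain cofinitely many terms of that subsequence. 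As written, your argument proves Claim \ref{claim3} (and hence the theorem) only for metrizable $X$; the general first countable case remains open on your approach --- as, indeed, it does on the paper's.
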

\begin{proof}
Fix $(n_k)\in \mathcal{N}$ and $m \in \omega$ and define for convenience $\xi: X\to \{0,1\}^\omega$ by $\xi(\eta):=\zeta_m((n_k), \eta)$ for all $\eta \in X$. 
It is enough to show that preimage $\xi^{-1}[U]$ is Borel in $X$ for every basic open set $U\subseteq \mathcal{N}$. 
This is clear if $U=\emptyset$. 
Otherwise there exist $p\in \omega$ and $r_0,r_1,\ldots,r_p \in \{0,1\}$ such that 
$
U=\{a \in \{0,1\}^\omega: a(t)=r_t \text{ for all }t=0,1,\ldots,p\}$. 
Set $A:=\{t \in \{0,1,\ldots,p\}: r_t=0\}$ and $B:=\{t \in \{0,1,\ldots,p\}: r_t=1\}$. It follows that 
\begin{displaymath}
\begin{split}
\xi^{-1}&[U]=\{\eta \in X: \zeta_m((n_k), \eta)(t)=r_t \text{ for all }t=0,1,\ldots,p\}\\
&=\left(\bigcap_{t \in A}\{\eta \in X: \zeta_m((n_k), \eta)(t)=0\right) \cap \left(\bigcap_{t \in B}\{\eta \in X: \zeta_m((n_k), \eta)(t)=1\right)\\
&=\left(\bigcap_{t \in A}\{\eta \in X: x_{n_t} \in U_{\eta,m}\right) \cap \left(\bigcap_{t \in B}\{\eta \in X: x_{n_t} \notin U_{\eta,m}\right)\\
&=\left(\bigcap_{t \in A}\,\bigcup_{\eta \in X:\, x_{n_t} \in U_{\eta,m}} U_{\eta,m}\right) \cap \left(\bigcap_{t \in B}\,\bigcap_{\eta \in X:\, x_{n_t} \notin U_{\eta,m}} U_{\eta,m}^c\right).
\end{split}
\end{displaymath}
Since each $U_{\eta,m}$ is open, we conclude that $\xi^{-1}[U]$ is a Borel set. 
\end{proof}

\begin{claim}\label{claim4}
For each $m \in \omega$, the set $\zeta_m^{-1}[\mathrm{Fin}]$ is Borel. 
\end{claim}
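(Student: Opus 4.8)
The plan is to reduce the joint Borel measurability of $\zeta_m$ to the separate information already recorded in Claims \ref{claim2} and \ref{claim3}, exploiting the very special way in which $\zeta_m$ depends on its first coordinate. First I would observe that, identifying subsets of $\omega$ with their characteristic functions, $\mathrm{Fin}$ is an $F_\sigma$ (hence Borel) subset of $\{0,1\}^\omega$, since $\mathrm{Fin}=\bigcup_{N\in\omega}\bigcap_{t>N}\{a\in\{0,1\}^\omega: a(t)=0\}$. Consequently
$$
\zeta_m^{-1}[\mathrm{Fin}]=\bigcup_{N\in\omega}\bigcap_{t>N}E_{m,t},
\quad\text{where}\quad
E_{m,t}:=\{((n_k),\eta)\in\mathcal{N}\times X: x_{n_t}\in U_{\eta,m}\},
$$
so that it suffices to prove that each $E_{m,t}$ is a Borel subset of $\mathcal{N}\times X$ (equivalently, that $\zeta_m$ itself is jointly Borel).

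The main point, and the step I expect to be the genuine obstacle, is that Claims \ref{claim2} and \ref{claim3} only provide separate continuity and separate Borel measurability of $\zeta_m$, and separate measurability does not imply joint measurability in general. What rescues the argument is the structural observation that the $t$-th coordinate $\zeta_m((n_k),\eta)(t)$ depends on the first variable $(n_k)$ only through the single entry $n_t$. This lets me slice $E_{m,t}$ according to the value of $n_t$:
$$
E_{m,t}=\bigcup_{j\in\omega}\left(\{(n_k)\in\mathcal{N}: n_t=j\}\times G_{m,j}\right),
\quad\text{where}\quad
G_{m,j}:=\{\eta\in X: x_j\in U_{\eta,m}\},
$$
these sets being empty for $j<t$ and, for $j\ge t$, partitioning $\mathcal{N}$.

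It then remains to check that each rectangle in this countable union is Borel. The factor $\{(n_k)\in\mathcal{N}: n_t=j\}$ is the preimage of the clopen singleton $\{j\}$ under the coordinate projection $\mathcal{N}\ni(n_k)\mapsto n_t\in\omega$, which is continuous, so it is clopen in $\mathcal{N}$. The factor $G_{m,j}$ is Borel in $X$: applying Claim \ref{claim3} to the identity sequence $(n_k)=(k)$ and the coordinate $j$ shows precisely that $\{\eta: x_j\in U_{\eta,m}\}$ is Borel. Hence each product $\{(n_k):n_t=j\}\times G_{m,j}$ lies in $\mathscr{B}(\mathcal{N})\otimes\mathscr{B}(X)\subseteq\mathscr{B}(\mathcal{N}\times X)$ (alternatively, one may write it as the intersection of the two preimages under the continuous projections onto $\mathcal{N}$ and $X$), so $E_{m,t}$ is Borel as a countable union of Borel sets, and the displayed formula for $\zeta_m^{-1}[\mathrm{Fin}]$ then finishes the proof. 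Conceptually, this is a concrete instance of the Carath\'eodory-type principle that a map which is continuous in a separable-metrizable variable (here $\mathcal{N}$, by Claim \ref{claim2}) and Borel in the other (here $X$, by Claim \ref{claim3}) is jointly Borel; the single-coordinate dependence is merely what makes the approximation by simple functions transparent in the present case.
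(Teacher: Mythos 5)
Your proof is correct, and it reaches the conclusion by a genuinely more self-contained route than the paper. The paper's own argument is two lines: it combines Claim \ref{claim2} (separate continuity in the $\mathcal{N}$-variable) and Claim \ref{claim3} (separate Borel measurability in the $X$-variable) with a general Carath\'eodory-type theorem from Srivastava's book asserting that such separately regular maps are jointly Borel, and then uses, as you do, that $\mathrm{Fin}$ is $F_\sigma$. You instead prove the joint measurability by hand, exploiting the structural fact that the $t$-th coordinate of $\zeta_m((n_k),\eta)$ depends on $(n_k)$ only through the single integer $n_t$; slicing over the countably many possible values of $n_t$ writes each set $E_{m,t}$ as a countable union of Borel rectangles $\{(n_k):n_t=j\}\times G_{m,j}$, with $G_{m,j}$ Borel by Claim \ref{claim3}. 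This is sound, and it has two side benefits: it avoids invoking the external joint-measurability theorem (and its separability hypotheses, which do hold here for $\mathcal{N}$ but must be checked), and it makes Claim \ref{claim2} entirely unnecessary for this step --- the clopenness of $\{(n_k):n_t=j\}$ is all you use about the first variable. What the paper's route buys in exchange is brevity and a statement that does not depend on the single-coordinate structure of $\zeta_m$. Your closing remark correctly identifies your argument as a concrete instance of the principle the paper cites.
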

\begin{proof}
Thanks to Claim \ref{claim2}, Claim \ref{claim3}, and \cite[Theorem 3.1.30]{S}, each map $\zeta_m$ is Borel measurable. The conclusion follows since $\mathrm{Fin}$ is a $F_\sigma$-set. 
\end{proof}

To conclude the proof of Proposition \ref{prop:Polish}, we have by Claim \ref{claim1} and Claim \ref{claim4} that both $\psi^{-1}[\,\mathcal{I}^+]\times X$ and $\sigma_m^{-1}[\mathrm{Fin}]$ are s-analytic subsets of $\mathcal{N}\times X$, hence also their intersection by Lemma \ref{lem:countableintersection}. 
By the identity \eqref{La}, $\Lambda_{\bm{x}}(\mathcal{I})$ is the projection on $X$ of a s-analytic subset of $\mathcal{N}\times X$, which is s-analytic by Lemma \ref{lem:projanalytic}. 
\end{proof}

\section{Minimal ideals $\mathcal{I}_W$ and their complexities}\label{sec:minimal}

In this last Section, following the line of research initiated in \cite{Xi, Xi2}, we recall the definition of certain ideals and we study their structural and topological complexity. For, given a sequence $\bm{x}$ taking values in a first countable Hausdorff space $X$ and a subset $W\subseteq X$, define the ideal 
$$
\mathcal{I}_W:=\{A\subseteq \omega: \mathrm{L}_{\bm{x} \upharpoonright A} \cap W= \emptyset\},
$$
where $\mathrm{L}_{\bm{x} \upharpoonright A}:=\Lambda_{\bm{x} \upharpoonright A}(\mathrm{Fin})$, 
see \cite[Section 2]{Xi}. More explicitly, $A \subseteq \omega$ belongs to $\mathcal{I}_W$ if and only if $A$ is finite or, in the opposite, $A$ is infinite and there are no infinite subsets $B\subseteq A$ such that $(x_n: n \in B)$ is convergent to some element of $W$. Note that $\mathcal{I}_W$ may be not admissible: for instance, if $W=\emptyset$ then $\mathcal{I}_W=\mathcal{P}(\omega)$.

The main reason of its introduction is the following: 
\begin{thm}\label{thm:xiIW}
Let $\bm{x}$ be a sequence taking values in a first countable Hausdorff space $X$ and fix a subset $W\subseteq X$. Then 
\begin{equation}\label{eq:Xiclaim}
\Lambda_{\bm{x}}(\mathcal{I}_W)=W \cap \mathrm{L}_{\bm{x}}. 
\end{equation}
In particular, 
if $X$ is separable, then 
%if $W$ is nonempty subset of $\mathrm{L}_{\bm{x}}$, then 
$W \in \mathscr{L}(\mathcal{I}_W)$. 
\end{thm}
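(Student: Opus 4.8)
The plan is to establish the set equality \eqref{eq:Xiclaim} by a double inclusion, the decisive ingredient being that in a Hausdorff space a convergent subsequence has a singleton set of limit points, and then to read off the concluding membership by exhibiting a sequence whose ordinary limit point set is all of $X$. First I would prove $\Lambda_{\bm{x}}(\mathcal{I}_W)\subseteq W\cap \mathrm{L}_{\bm{x}}$. Fix $\eta\in \Lambda_{\bm{x}}(\mathcal{I}_W)$, so there is an infinite $B\in \mathcal{I}_W^+$ with $(x_n:n\in B)$ convergent to $\eta$; convergence along the infinite set $B$ already gives $\eta\in \mathrm{L}_{\bm{x}}$. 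Since $X$ is Hausdorff, every subsequence of a convergent sequence converges to the same limit, so $\mathrm{L}_{\bm{x}\upharpoonright B}=\{\eta\}$. As $B\notin \mathcal{I}_W$ means exactly $\mathrm{L}_{\bm{x}\upharpoonright B}\cap W\neq\emptyset$, we obtain $\{\eta\}\cap W\neq\emptyset$, that is $\eta\in W$, whence $\eta\in W\cap \mathrm{L}_{\bm{x}}$.

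For the reverse inclusion, fix $\eta\in W\cap \mathrm{L}_{\bm{x}}$. Membership in $\mathrm{L}_{\bm{x}}$ provides an infinite set $B$ with $(x_n:n\in B)\to\eta$, so again $\mathrm{L}_{\bm{x}\upharpoonright B}=\{\eta\}$ by Hausdorffness. Since $\eta\in W$, we have $\mathrm{L}_{\bm{x}\upharpoonright B}\cap W=\{\eta\}\neq\emptyset$, hence $B\in \mathcal{I}_W^+$, and the subsequence indexed by $B$ witnesses $\eta\in \Lambda_{\bm{x}}(\mathcal{I}_W)$. This completes the proof of \eqref{eq:Xiclaim}.

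For the concluding clause I would use separability to choose the defining sequence so that $\mathrm{L}_{\bm{x}}=X$. Let $D$ be a countable dense subset of $X$ and let $\bm{x}$ enumerate $D$ with every element repeated infinitely often. Given any $\eta\in X$ with decreasing local base $(U_{\eta,m})$, density yields a point of $D$ inside each $U_{\eta,m}$, and its infinitely many occurrences in $\bm{x}$ allow the extraction of a strictly increasing index sequence along which $\bm{x}$ converges to $\eta$; thus $\eta\in \mathrm{L}_{\bm{x}}$ and therefore $\mathrm{L}_{\bm{x}}=X$. Defining $\mathcal{I}_W$ from this $\bm{x}$, equation \eqref{eq:Xiclaim} collapses to $\Lambda_{\bm{x}}(\mathcal{I}_W)=W\cap X=W$, so $\bm{x}$ itself witnesses $W\in \mathscr{L}(\mathcal{I}_W)$; the degenerate case $W=\emptyset$ is immediate since $\emptyset\in \mathscr{L}(\mathcal{I}_W)$ by definition.

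The step I expect to be the main obstacle is the inclusion $\Lambda_{\bm{x}}(\mathcal{I}_W)\subseteq W$: the argument hinges on recognizing that $\mathcal{I}_W$-positivity of the index set of a convergent subsequence is equivalent, through the identity $\mathrm{L}_{\bm{x}\upharpoonright B}=\{\eta\}$, to the limit $\eta$ belonging to $W$. This is precisely where the Hausdorff hypothesis is indispensable, and it is the point on which the whole equivalence turns.
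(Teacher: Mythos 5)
Your proposal is correct. The paper itself does not prove the identity \eqref{eq:Xiclaim} but simply cites Theorem 2.2(i) of \cite{Xi}; your double-inclusion argument, resting on the fact that in a Hausdorff space $\mathrm{L}_{\bm{x}\upharpoonright B}=\{\eta\}$ whenever $(x_n:n\in B)$ converges to $\eta$ (so that $B\in\mathcal{I}_W^+$ iff $\eta\in W$), is the natural self-contained proof of that cited fact, and your treatment of the second part coincides with the paper's density argument --- indeed it is slightly more careful, since repeating each element of the countable dense set infinitely often guarantees $\mathrm{L}_{\bm{x}}=X$ even at isolated points, which a mere ``dense image'' assumption would not.
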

\begin{proof}
    The first part follows by \cite[Theorem 2.2(i)]{Xi}. This implies, if $W\subseteq \mathrm{L}_{\bm{x}}$, then $\Lambda_{\bm{x}}(\mathcal{I}_W)=W$, so that $W \in \mathscr{L}(\mathcal{I}_W)$. Hence, the second part is obtained by choosing a sequence $\bm{x}$ with dense image. 
\end{proof}

Note that that Theorem \ref{thm:xiIW} proves that, even if $X=\mathbb{R}$, the family $\mathscr{L}(\mathcal{I})$ may contain sets which are not Borel, which answered an open question in \cite{BL}. 

It is also worth to remark that ideals $\mathcal{I}_W$ are not the only ones for which \eqref{eq:Xiclaim} holds, see \cite[Theorem 2.7]{Xi}; cf. also \cite[Theorems 3.4 and 3.5]{Xi2} for further refinements of Theorem \ref{thm:xiIW} in regular and Polish spaces. %However, we show that, in certain cases, they the smallest ideals with such property.
However, we show that they are the smallest ideals with such property. For, we say that $\mathcal{I}$ \emph{contains an isomorphic copy of} $\mathcal{J}$ if there exists a bijection $\phi$ on $\omega$ such that $\phi[\,\mathcal{J}]\subseteq \mathcal{I}$, where $\phi[\,\mathcal{J}]$ stands for the family $\{\phi[S]: S \in \mathcal{J}\}$.

\begin{prop}\label{prop:minimality}
 Let $\bm{x}$ be a sequence taking values in a 
% first countable Hausdorff space $X$ 
 compact metric space $X$ 
 and fix a dense subset $W\subseteq X$. 
 Let also $\mathcal{I}$ be an ideal on $\omega$ such that $W \in \mathscr{L}(\mathcal{I})$. Suppose also that $\bm{x}$ has dense image, and that
 \begin{equation}\label{eq:conditionconvleKX}
 \forall A \in \mathcal{I}^+, \forall \bm{y} \in X^\omega, \quad 
 \Lambda_{\bm{y}\upharpoonright A}(\,\mathcal{I}\upharpoonright A)\neq \emptyset. 
 \end{equation}
% for all sequences $\bm{y}$
 Then $\mathcal{I}$ contains an isomorphic copy of $\mathcal{I}_W$. 
 %$\mathcal{I}_W \subseteq \mathcal{I}$.
\end{prop}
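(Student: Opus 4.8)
The plan is to build the desired bijection $\phi\colon\omega\to\omega$ (for which $\phi[\mathcal I_W]\subseteq\mathcal I$) out of a sequence that realizes $W$ through $\mathcal I$. Since $W\in\mathscr L(\mathcal I)$, fix $\bm{z}\in X^\omega$ with $\Lambda_{\bm{z}}(\mathcal I)=W$; as every point of the dense set $W$ is an $\mathcal I$-limit point of $\bm{z}$, the sequence $\bm{z}$ has dense image. Because $\phi$ is a bijection, the inclusion $\phi[\mathcal I_W]\subseteq\mathcal I$ is equivalent to the implication $T\in\mathcal I^+\Rightarrow\phi^{-1}[T]\in\mathcal I_W^+$; and, unwinding the definition of $\mathcal I_W$, the conclusion $\phi^{-1}[T]\in\mathcal I_W^+$ means precisely that $\phi^{-1}[T]$ has an infinite subset along which $\bm{x}$ converges to a point of $W$.

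First I would shrink the collection of sets $T$ that need to be treated. Applying the nontriviality hypothesis \eqref{eq:conditionconvleKX} with $\bm{y}=\bm{z}$ and $A=T$, any $T\in\mathcal I^+$ satisfies $\Lambda_{\bm{z}\upharpoonright T}(\mathcal I\upharpoonright T)\neq\emptyset$, so there are $\eta\in X$ and $T'\subseteq T$ with $T'\in(\mathcal I\upharpoonright T)^+$ and $\lim_{m\in T'}z_m=\eta$; since $T'\notin\mathcal I$, this forces $\eta\in\Lambda_{\bm{z}}(\mathcal I)=W$. Hence every $\mathcal I$-positive set contains an $\mathcal I$-positive subset along which $\bm{z}$ converges to a point of $W$, and, as $\mathcal I_W^+$ is upward closed, it is enough to construct $\phi$ so that $\phi^{-1}[T']\in\mathcal I_W^+$ for every such convergent $T'$.

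The core of the proof is then to design $\phi$ so that convergence of $\bm{z}$ to points of $W$ is carried over to $\mathcal I_W$-positivity of preimages. The cleanest way to do this is to arrange the relabelling so that $d(x_n,z_{\phi(n)})\to 0$: for a convergent $T'$ as above this yields $\lim_{n\in\phi^{-1}[T']}x_n=\eta\in W$, hence $\phi^{-1}[T']\in\mathcal I_W^+$, and the reduction closes. I would build such a $\phi$ by a back-and-forth matching of the two dense sequences, using compactness of $X$ to pass through finite $2^{-j}$-nets and the density of $\{x_n\}$ to keep finding unused indices near prescribed values, alternately exhausting the $\bm{x}$- and the $\bm{z}$-indices so that $\phi$ stays a bijection.

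I expect the matching step to be the main obstacle, since a relabelling with $d(x_n,z_{\phi(n)})\to 0$ need not exist: the two sequences may place their values with incompatible multiplicities, most conspicuously at an isolated point $p\in W$ which $\bm{z}$ attains infinitely often but the merely dense $\bm{x}$ attains only finitely often. The remedy I would use is to stop insisting on faithful distances there and instead send the whole fibre $\{m:z_m=p\}$ onto an infinite block of $\bm{x}$-indices that clusters at a single point $w^\ast\in W\cap\mathrm{L}_{\bm{x}}$ (such a $w^\ast$ exists once one checks that $\mathcal I_W$ is admissible, which under the present hypotheses follows from \eqref{eq:conditionconvleKX} and the density of $W$); any infinite piece of that block still has $\bm{x}$ converging to $w^\ast\in W$, so the required positivity is preserved. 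Partitioning a single convergent-to-$w^\ast$ reservoir into countably many disjoint infinite blocks lets one service all such isolated points at once, and the delicate part is to interleave these en-bloc assignments with the distance-faithful matching of the remaining indices so that the end result is a single bijection of $\omega$.
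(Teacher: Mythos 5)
Your proposal is essentially the paper's own proof: fix $\bm{z}$ with $\Lambda_{\bm{z}}(\mathcal I)=W$, observe that density of $W$ forces $\mathrm{L}_{\bm{z}}=X$, build a bijection $\phi$ of $\omega$ with $d(x_n,z_{\phi(n)})\to 0$, and then use \eqref{eq:conditionconvleKX} to extract from each $A\in\mathcal I^+$ an $\mathcal I$-positive subset along which $\bm{z}$ converges to a point of $W=\Lambda_{\bm{z}}(\mathcal I)$, which the distance-faithful relabelling converts into $\phi^{-1}[A]\notin\mathcal I_W$. The paper realizes the matching not by back-and-forth but by a greedy scheme: it chooses finite blocks $[\iota_n,\iota_{n+1})$ of $\bm{x}$-indices whose $2^{-n}$-balls cover $X$ and sends each $x_m$ to the least unused $k$ with $d(z_k,x_m)<2^{-n}$ (well-defined because $\mathrm{L}_{\bm z}=X$, surjective because of the covering plus the ``least unused'' rule). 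The isolated-point obstruction you flag in your last paragraph is a genuine one, but it is not a defect of your proposal relative to the paper: the paper's covering step tacitly requires every tail of $\bm{x}$ to be $2^{-n}$-dense, i.e.\ $\mathrm{L}_{\bm x}=X$, which ``dense image'' alone does not give when $X$ has isolated points, so the published construction has exactly the same blind spot; in the paper's intended application ($X$ the Cantor space, $\bm x$ an enumeration of the finitely supported points) the space is perfect and the issue evaporates. Your en-bloc patch for isolated fibres is plausible but, as you acknowledge, not carried out, and it would still need care when a positive set converges to a non-isolated $\eta$ through infinitely many distinct isolated values; if you want a complete argument you should either add the hypothesis $\mathrm{L}_{\bm x}=X$ (automatic for perfect $X$) or finish that interleaving.
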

\begin{proof}
By hypothesis, there exists a sequence $\bm{y}$ taking values in $X$ such that $\Lambda_{\bm{y}}(\mathcal{I})=W$.  
%On the one hand, it follows by \eqref{eq:conditionconvleKX} that $\Lambda_{\bm{y}\upharpoonright A}(\,\mathcal{I}\upharpoonright A)\neq \emptyset$. On the other hand, 
Let $d$ be a compatible metric on $X$ and note that the denseness of $W$ implies that $\mathrm{L}_{\bm{y}}=X$. 
Since $X$ is compact, there exists a strictly increasing sequence of integers $(\iota_n: n \in \omega)$ such that $\iota_0:=0$ and, for each $n \in \omega$, the family of open balls $\{B(x_k,2^{-n}): k \in [\iota_n,\iota_{n+1})\}$ is an open cover of $X$. 

At this point, define recursively the map $\phi: \omega \to \omega$ as it follows:
\begin{enumerate}[label={\rm (\roman{*})}]
\item $\phi(0)$ is the smallest integer $k\in \omega$ such that $d(y_{\phi(k)}, x_0)<1$; 
\item for each integer $m \in (1,\iota_1)$, $\phi(m)$ is the smallest integer $k\in \omega$ such that $d(y_{\phi(k)}, x_m)<1$ and $k\notin \phi[\{0,1,\ldots,m-1\}]$; 
\item for each $n,m \in \omega$ with $n\ge 1$ and $m \in [\iota_n,\iota_{n+1})$, $\phi(m)$ is the smallest integer $k\in \omega$ such that $d(y_{\phi(k)}, x_m)<2^{-n}$ and $k\notin \phi[\{0,1,\ldots,m-1\}]$. 
\end{enumerate}
It follows by construction that $\phi$ is a bijection on $\omega$ which satisfies 
\begin{equation}\label{eq:xyc0}
\lim_{n\to \infty} d(y_{\phi(n)}, x_n)=0.
\end{equation}

Fix $A \in \mathcal{I}^+$ and note that the set of $\mathcal{I}\upharpoonright A$-limit points of $\bm{y}\upharpoonright A$ is contained both in $\mathrm{L}_{\bm{y}\upharpoonright A}$ and in $\Lambda_{\bm{y}}(\mathcal{I})$. Since the latter is equal to $W$, we obtain by \eqref{eq:conditionconvleKX} and \eqref{eq:xyc0} that 
$$
\emptyset \neq \Lambda_{\bm{y}\upharpoonright A}(\,\mathcal{I}\upharpoonright A)\subseteq \mathrm{L}_{\bm{y}\upharpoonright A} \cap W= \mathrm{L}_{\bm{x}\upharpoonright \phi^{-1}[A]} \cap W.
$$
By the definition of $\mathcal{I}_W$, we get $\phi^{-1}[A] \notin \mathcal{I}_W$. Therefore $\phi[\,\mathcal{I}_W] \subseteq \mathcal{I}$. 
\end{proof}

As remarked in the Introduction, the technical condition \eqref{eq:conditionconvleKX} has been already studied in the literature. For instance, if $X$ is compact, all $F_\sigma$-ideals $\mathcal{I}$ satisfy \eqref{eq:conditionconvleKX}, taking into account \cite[Theorem 2.3]{BL} and \cite[Lemma 3.1(vi)]{MR3920799}.

The following result, due to He et al. \cite{Xi}, deals with the topological complexity of ideals $\mathcal{I}_W$. \emph{Hereafter, we assume for simplicity that $X$ is the Cantor space and $\bm{x}$ is an enumeration of the rationals \textup{(}i.e., finitely supported sequences\textup{)} of $X$, with $x_0:=(0,0,\ldots)$.} 
\begin{thm}\label{thm:XiIWtopology}
Let $W$ be a subset of the Cantor space $X=\{0,1\}^{\omega}$. Then\textup{:}
\begin{enumerate}[label={\rm (\roman{*})}]
\item \label{item:1Xitopology} if $W$ is closed then $\mathcal{I}_W$ is a $\Sigma^0_2$-ideal\textup{;}
\item \label{item:2Xitopology} if $W$ is $\Sigma^0_2$ then $\mathcal{I}_W$ is a $\Pi^0_3$-ideal\textup{;}
\item \label{item:3Xitopology} if $W$ is open then $\mathcal{I}_W$ is an analytic $P$-ideal\textup{.}% \textup{(}hence, in particular, it is a $F_{\sigma\delta}$-ideal\textup{)}.
\end{enumerate}
\end{thm}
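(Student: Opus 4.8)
The plan is to reduce all three claims to a single elementary observation: for a fixed index set $D\subseteq\omega$, the condition ``$A\cap D$ is finite'' defines a $\Sigma^0_2$ subset of $\mathcal{P}(\omega)$, since it equals $\bigcup_k\{A: A\cap D\subseteq[0,k]\}$, a countable union of clopen sets. Everything then rests on translating the topological requirement $\mathrm{L}_{\bm{x}\upharpoonright A}\cap W=\emptyset$ into a Boolean combination of such conditions, exploiting the compactness and zero-dimensionality of the Cantor space together with the topological nature of $W$.

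For \ref{item:1Xitopology}, I would first record the key equivalence: since $W$ is closed, hence compact, $A\in\mathcal{I}_W$ if and only if there is an open $U\supseteq W$ with $\{n\in A: x_n\in U\}$ finite. The direction $(\Leftarrow)$ is immediate, because convergence to a point of the open set $U$ forces the subsequence eventually into $U$; for $(\Rightarrow)$, each $\eta\in W$ fails to be a cluster point of $\bm{x}\upharpoonright A$, so it has a neighborhood capturing only finitely many $x_n$ with $n\in A$, and a finite subcover of the compact set $W$ finishes the job. Writing $W_m:=\{x: d(x,W)<2^{-m}\}$ and $D_m:=\{n: x_n\in W_m\}$, compactness shows that every open $U\supseteq W$ contains some $W_m$, so $A\in\mathcal{I}_W$ iff $A\cap D_m$ is finite for some $m$. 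Thus $\mathcal{I}_W=\bigcup_m\{A: A\cap D_m\text{ finite}\}$ is a countable union of $\Sigma^0_2$ sets, hence $\Sigma^0_2$.

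For \ref{item:2Xitopology}, I would write $W=\bigcup_i C_i$ with each $C_i$ closed. Since $\mathrm{L}_{\bm{x}\upharpoonright A}\cap W=\bigcup_i(\mathrm{L}_{\bm{x}\upharpoonright A}\cap C_i)$, we get $\mathcal{I}_W=\bigcap_i\mathcal{I}_{C_i}$. Each $\mathcal{I}_{C_i}$ is $\Sigma^0_2$ by \ref{item:1Xitopology}, and a countable intersection of $\Sigma^0_2$ sets is, by definition, $\Pi^0_3$.

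The delicate case is \ref{item:3Xitopology}, which I expect to be the main obstacle. The naive guess that $A\in\mathcal{I}_W$ iff $\{n\in A: x_n\in W\}$ is finite is \emph{false}: a subsequence lying entirely inside $W$ may cluster only at a boundary point of $W$ lying outside $W$. The remedy is to use zero-dimensionality and decompose the open set into a disjoint union of basic clopen cylinders $W=\bigsqcup_j[s_j]$, and to set $D_j:=\{n: x_n\in[s_j]\}$, a pairwise disjoint family. Because each $[s_j]$ is clopen and compact, I claim $A\in\mathcal{I}_W$ iff $A\cap D_j$ is finite for every $j$: if $A\cap D_j$ is infinite, compactness of $[s_j]$ extracts a subsequence converging to a point of $[s_j]\subseteq W$; conversely, convergence to some $\eta\in W$ localizes $\eta$ in a single clopen $[s_j]$, forcing $A\cap D_j$ to be infinite. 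This yields $\mathcal{I}_W=\bigcap_j\{A: A\cap D_j\text{ finite}\}$, which is $\Pi^0_3$, in particular Borel and hence analytic. For the $P$-ideal property, given $A_0,A_1,\ldots\in\mathcal{I}_W$, I would diagonalize along the disjoint $(D_j)$ by setting $A:=(\omega\setminus\bigcup_jD_j)\cup\bigcup_j\bigcup_{i\le j}(A_i\cap D_j)$; then $A\cap D_j=\bigcup_{i\le j}(A_i\cap D_j)$ is finite for each $j$, so $A\in\mathcal{I}_W$, while $A_i\setminus A$ is contained in $\bigcup_{j<i}(A_i\cap D_j)$, a finite union of finite sets, for every $i$. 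Hence $\mathcal{I}_W$ is an analytic $P$-ideal. The only points requiring care are the localization step and the harmless role of the indices $n$ with $x_n\notin W$, which lie outside every $D_j$ and so impose no constraint on membership in $\mathcal{I}_W$.
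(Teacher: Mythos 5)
Your proof is correct, but it is genuinely different from what the paper does: the paper does not argue this theorem at all, it simply invokes Theorem~2.2(ii) of He et al.\ (the reference \cite{Xi}) for the fixed dense sequence $\bm{x}$, whereas you give a self-contained argument. All three reductions check out: in (i) the compactness of the closed set $W$ correctly converts ``no cluster point of $\bm{x}\upharpoonright A$ in $W$'' into ``$A\cap D_m$ finite for some $m$'' via a finite subcover and the cofinal family $W_m$ of open neighbourhoods of $W$; in (ii) the identity $\mathcal{I}_{\bigcup_i C_i}=\bigcap_i\mathcal{I}_{C_i}$ is immediate; and in (iii) the decomposition of the open set into pairwise disjoint basic clopen cylinders $[s_j]$ gives the clean normal form $\mathcal{I}_W=\bigcap_j\{A: A\cap D_j \text{ finite}\}$, from which both the complexity bound and the P-property (by your diagonal set $A$) follow, the indices with $x_n\notin W$ being genuinely harmless since a sequence lying outside the open set $W$ cannot converge into $W$. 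Your direct approach buys more than the citation: it exhibits explicit representations of $\mathcal{I}_W$ as $\bigcup_m$ or $\bigcap_j$ of ``finite trace'' conditions, and in case (iii) it actually shows $\mathcal{I}_W$ is $\Pi^0_3$ (essentially an isomorphic copy of $\{\emptyset\}\times\mathrm{Fin}$ on $\bigcup_j D_j$ when infinitely many $D_j$ are infinite), which is sharper than ``analytic'' and consistent with the general fact that analytic P-ideals are $\Pi^0_3$. Two cosmetic points: the sets $\{A: A\cap D\subseteq[0,k]\}$ are closed (countable intersections of clopen sets), not clopen, when $D$ is infinite --- this does not affect the $\Sigma^0_2$ conclusion; and the degenerate case $W=\emptyset$, where $\mathcal{I}_W=\mathcal{P}(\omega)$, should be set aside at the start as the paper does.
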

\begin{proof}
    It follows by \cite[Theorem 2.2(ii)]{Xi} 
    by choosing a sequence $\bm{x}$ with dense image (the case $W=\emptyset$ holds as well). 
    %since $X$ is separable. 
\end{proof}

Taking into account also the results obtained in Section \ref{sec:large}, we prove some characterizations for the [non]closedness of the subset $W$: 
\begin{thm}\label{thm:onlytwo}
Let $W$ be a nonempty subset of the Cantor space $X=\{0,1\}^{\omega}$ such that $\mathcal{I}_W$ has the hereditary Baire property. Then the following are equivalent\textup{:}
\begin{enumerate}[label={\rm (\roman{*})}]
\item \label{item:1Wclosed} $W$ is not closed\textup{;}
\item \label{item:2Wclosed} $\emptyset \times \mathrm{Fin} \le_{\mathrm{RB}} \mathcal{I}_W$\textup{;}
\item \label{item:3Wclosed} $\mathcal{I}_W$ is $\Pi^0_3$-hard\textup{;}
\item \label{item:4Wclosed} $\Sigma^0_2\subseteq \mathscr{L}(\mathcal{I}_W)$. 
\end{enumerate}
\end{thm}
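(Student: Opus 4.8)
The plan is to prove the equivalences by establishing a cycle of implications, leaning heavily on the machinery already developed in Section \ref{sec:large}. The cleanest route is to show \ref{item:1Wclosed} $\Rightarrow$ \ref{item:2Wclosed} $\Rightarrow$ \ref{item:3Wclosed} $\Rightarrow$ \ref{item:1Wclosed}, and separately \ref{item:2Wclosed} $\Leftrightarrow$ \ref{item:4Wclosed}, since three of these four steps are essentially free from earlier results. Indeed, the implication \ref{item:2Wclosed} $\Rightarrow$ \ref{item:3Wclosed} is exactly Lemma \ref{lem:step2Gdeltasigma}, and the equivalence \ref{item:2Wclosed} $\Leftrightarrow$ \ref{item:4Wclosed} comes from combining Corollary \ref{cor:RB} (giving \ref{item:2Wclosed} $\Rightarrow$ \ref{item:4Wclosed}, noting that $\mathcal{I}_W$ has the hereditary Baire property by hypothesis) with a converse argument: if $\Sigma^0_2 \subseteq \mathscr{L}(\mathcal{I}_W)$ then inclusion \eqref{eq:inclusionclosedsets} fails for $\mathcal{I}_W$ (since $\Sigma^0_2$ contains non-closed sets), and then Lemma \ref{lem:step1Gdeltasigma} yields $\emptyset \times \mathrm{Fin} \le_{\mathrm{RB}} \mathcal{I}_W$.

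For the implication \ref{item:1Wclosed} $\Rightarrow$ \ref{item:2Wclosed}, I would first observe that by Theorem \ref{thm:xiIW} we have $W = \Lambda_{\bm{x}}(\mathcal{I}_W)$ (here $\bm{x}$ has dense image, so $\mathrm{L}_{\bm{x}} = X \supseteq W$), hence $W \in \mathscr{L}(\mathcal{I}_W)$. If $W$ is not closed, then $\mathscr{L}(\mathcal{I}_W) \not\subseteq \Pi^0_1$, i.e.\ inclusion \eqref{eq:inclusionclosedsets} fails for $\mathcal{I}_W$. Since $\mathcal{I}_W$ has the hereditary Baire property and $X$ is a (nondiscrete, first countable Hausdorff) Polish space, Lemma \ref{lem:step1Gdeltasigma} applies directly and gives $\emptyset \times \mathrm{Fin} \le_{\mathrm{RB}} \mathcal{I}_W$. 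This step is short precisely because Lemma \ref{lem:step1Gdeltasigma} was stated in exactly the generality needed.

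The implication I expect to be the main obstacle is \ref{item:3Wclosed} $\Rightarrow$ \ref{item:1Wclosed}, equivalently its contrapositive: if $W$ is closed, then $\mathcal{I}_W$ is not $\Pi^0_3$-hard. The natural approach is to produce an upper bound on the complexity of $\mathcal{I}_W$ that rules out $\Pi^0_3$-hardness. By Theorem \ref{thm:XiIWtopology}\ref{item:1Xitopology}, if $W$ is closed then $\mathcal{I}_W$ is a $\Sigma^0_2$-ideal. A $\Sigma^0_2$ set can never be $\Pi^0_3$-hard: indeed a $\Pi^0_3$-hard set cannot be $\Sigma^0_3$ (by \cite[Theorem 22.10]{K}, as invoked in the proof of Theorem \ref{thm:Gdeltasigmaideals}), let alone $\Sigma^0_2 \subseteq \Sigma^0_3$. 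So the contrapositive follows immediately once we invoke Theorem \ref{thm:XiIWtopology}\ref{item:1Xitopology}. The delicate point to verify is simply that all the cited results apply to the fixed standing setup (Cantor space $X$, $\bm{x}$ an enumeration of the finitely supported sequences), so that $\bm{x}$ genuinely has dense image and $\mathrm{L}_{\bm{x}} = X$; granting that, the cycle closes and the four conditions are equivalent.
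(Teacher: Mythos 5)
Your proposal is correct and follows essentially the same route as the paper: \ref{item:1Wclosed}$\Rightarrow$\ref{item:2Wclosed} via Theorem \ref{thm:xiIW} and Lemma \ref{lem:step1Gdeltasigma}, \ref{item:2Wclosed}$\Rightarrow$\ref{item:3Wclosed} via Lemma \ref{lem:step2Gdeltasigma}, \ref{item:2Wclosed}$\Rightarrow$\ref{item:4Wclosed} via Corollary \ref{cor:RB}, and the return to \ref{item:1Wclosed} via Theorem \ref{thm:XiIWtopology}\ref{item:1Xitopology}. The only (harmless) divergence is that you close the loop for \ref{item:4Wclosed} by proving \ref{item:4Wclosed}$\Rightarrow$\ref{item:2Wclosed} through Lemma \ref{lem:step1Gdeltasigma}, whereas the paper proves \ref{item:4Wclosed}$\Rightarrow$\ref{item:1Wclosed} using Theorem \ref{thm:oldthm}; both rest on the same observation that $\Sigma^0_2$ contains non-closed sets.
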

\begin{proof}
    \ref{item:1Wclosed} $\implies$ \ref{item:2Wclosed}. 
    Thanks to Theorem \ref{thm:xiIW}, $W \in \mathscr{L}(\mathcal{I}_W)$, hence the inclusion $\mathscr{L}(\mathcal{I}_W)\subseteq \Pi^0_1$ fails. The claim follows by Lemma \ref{lem:step1Gdeltasigma}. 
    
    \ref{item:2Wclosed} $\implies$ \ref{item:3Wclosed}. 
    It follows by 
    Lemma \ref{lem:step2Gdeltasigma}. 

    \ref{item:2Wclosed} $\implies$ \ref{item:4Wclosed}. 
    It follows by 
    Corollary \ref{cor:RB}.
    
    \ref{item:3Wclosed} $\implies$ \ref{item:1Wclosed}. 
    If $W$ is closed, then $\mathcal{I}_W$ would be a $\Sigma^0_2$-ideal by Theorem \ref{thm:XiIWtopology}.

    \ref{item:4Wclosed} $\implies$ \ref{item:1Wclosed}. 
    As in the previous implication, if $W$ is closed, then $\mathcal{I}_W$ would be $\Sigma^0_2$. Hence $\mathscr{L}(\mathcal{I}_W)=\Pi^0_1$ by Theorem \ref{thm:oldthm}. However, since $X$ is separable, there exists a countable dense subset, which is $\Sigma^0_2$ and not closed. 
\end{proof}

%\begin{rmk}
%Note that, if $W$ is not closed, then $\mathcal{I}_W$ is not a $P$-ideal. Indeed, if $X$
%\end{rmk}

On the same lines, we are able to characterize the openness of the set $W$ and the $P$-property of the ideal $\mathcal{I}_W$, improving Theorem \ref{thm:XiIWtopology}\ref{item:3Xitopology}:
\begin{thm}\label{thm:onopenIW}
Let $W$ be a subset of the Cantor space $X=\{0,1\}^{\omega}$. Then the following are equivalent\textup{:}
\begin{enumerate}[label={\rm (\roman{*})}]
\item \label{item:1Wopen} $W$ is open\textup{;}
\item \label{item:2Wopen} $\mathcal{I}_W$ is an analytic $P$-ideal\textup{;}
\item \label{item:3Wopen} $\mathcal{I}_W$ is a $P$-ideal\textup{.}
\end{enumerate}
\end{thm}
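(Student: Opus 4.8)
The plan is to establish the cycle of implications \ref{item:1Wopen} $\implies$ \ref{item:2Wopen} $\implies$ \ref{item:3Wopen} $\implies$ \ref{item:1Wopen}. The first implication is already available: it is exactly Theorem \ref{thm:XiIWtopology}\ref{item:3Xitopology}, so no work is needed there. The second implication \ref{item:2Wopen} $\implies$ \ref{item:3Wopen} is trivial, since an analytic $P$-ideal is by definition a $P$-ideal. Thus the entire content of the theorem is concentrated in the contrapositive of the last implication: I would show that if $W$ is \emph{not} open, then $\mathcal{I}_W$ is \emph{not} a $P$-ideal.

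\medskip

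So suppose $W$ is not open. Then there exists a point $\eta \in W$ that is not interior to $W$; since $X=\{0,1\}^\omega$ is metrizable and first countable, I can pick a sequence $(\eta^{(j)})$ of points of $X\setminus W$ converging to $\eta$, and (passing to a subsequence) assume the $\eta^{(j)}$ are distinct and that $d(\eta^{(j)},\eta)<2^{-j}$ for a fixed compatible metric $d$. The key idea is to build, for each $j$, an infinite set $A_j\subseteq\omega$ witnessing that the approximating point $\eta^{(j)}\notin W$ can be reached \emph{along a tail}: concretely, since $\bm{x}$ enumerates the (dense) rationals of $X$, for each $j$ I can select an infinite set $A_j$ of indices along which $(x_n)$ converges to $\eta^{(j)}$. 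Because $\eta^{(j)}\notin W$ and $\eta^{(j)}$ is the \emph{unique} ordinary limit point of $(x_n:n\in A_j)$, no infinite subsequence of $(x_n:n\in A_j)$ can converge into $W$; hence $A_j\in\mathcal{I}_W$ for every $j$. The plan is then to show this countable family $(A_j)$ cannot be almost-covered by a single member of $\mathcal{I}_W$.

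\medskip

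For the $P$-ideal obstruction, suppose toward a contradiction that there is $A\in\mathcal{I}_W$ with $A_j\setminus A$ finite for all $j$. The strategy is to select, by a diagonal argument, an index $n_j\in A_j\cap A$ for each $j$ with $d(x_{n_j},\eta^{(j)})<2^{-j}$, which is possible because $A_j\setminus A$ is finite while $(x_n:n\in A_j)\to\eta^{(j)}$, so infinitely many terms of $A_j$ lie in $A$ and arbitrarily close to $\eta^{(j)}$. Choosing the $n_j$ strictly increasing, the set $B:=\{n_j:j\in\omega\}$ is an infinite subset of $A$, and the triangle inequality $d(x_{n_j},\eta)\le d(x_{n_j},\eta^{(j)})+d(\eta^{(j)},\eta)<2^{-j}+2^{-j}$ forces $(x_{n_j})\to\eta$. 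Since $\eta\in W$, this means $\mathrm{L}_{\bm{x}\upharpoonright A}\cap W\ni\eta$, i.e.\ $A\notin\mathcal{I}_W$, contradicting $A\in\mathcal{I}_W$. Therefore no such $A$ exists and $\mathcal{I}_W$ is not a $P$-ideal, completing the contrapositive.

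\medskip

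I expect the main obstacle to be the construction in the previous paragraph of the sets $A_j$ witnessing $\eta^{(j)}\notin W$ as genuine tail limits while simultaneously guaranteeing $A_j\in\mathcal{I}_W$: one must be careful that $(x_n:n\in A_j)$ has $\eta^{(j)}$ as its \emph{only} cluster point so that, with $\eta^{(j)}\notin W$ and $W$ possibly containing nearby points, no infinite sub-subsequence sneaks into $W$. Using that $\bm x$ enumerates exactly the eventually-zero sequences and choosing $A_j$ to converge rapidly to the fixed point $\eta^{(j)}$ handles this, but it is the one place where the concrete description of $X$ and $\bm x$ fixed in the standing assumption before Theorem \ref{thm:XiIWtopology} is really used, rather than being a formal manipulation.
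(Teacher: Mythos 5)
Your proposal is correct and follows essentially the same route as the paper: (i)$\implies$(ii) by Theorem \ref{thm:XiIWtopology}\ref{item:3Xitopology}, (ii)$\implies$(iii) trivially, and for (iii)$\implies$(i) a sequence $(\eta^{(j)})$ in $W^c$ converging to some $\eta\in W$, witness sets $A_j\in\mathcal{I}_W$ converging along $\bm{x}$ to $\eta^{(j)}$, and a diagonal selection inside any almost-cover $A$ producing a subsequence converging to $\eta\in W$, so that $A\notin\mathcal{I}_W$. The only cosmetic difference is your explicit $2^{-j}$ bounds in place of the paper's choice $d(y_k,x_{b_k})\le d(y_k,\eta)$.
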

\begin{proof}
  \ref{item:1Wopen} $\implies$ \ref{item:2Wopen}. It follows by Theorem \ref{thm:XiIWtopology}\ref{item:3Xitopology}.
  
  \ref{item:2Wopen} $\implies$ \ref{item:3Wopen}. This is obvious. 

  \ref{item:3Wopen} $\implies$ \ref{item:1Wopen}. Let us suppose that $W$ is not open, i.e., $W^c$ is not closed. Since $X$ is metrizable, $W^c$ is not sequentially closed. Hence it is possible to pick a sequence $\bm{y}$ taking values in $W^c$ which is convergent to some $\eta \in W$. Let us denote by $d$ a compatible metric on $X$, so that $\lim_k d(y_k,\eta)=0$. 
  Since $\bm{x}$ is dense, for each $k \in\omega$ there exists an infinite set $A_k\subseteq \omega$ such that $\lim \bm{x}\upharpoonright A_k=y_k$. Considering that $y_k \in W^c$, it follows that $\mathrm{L}_{\bm{x}\upharpoonright A_k} \cap W=\emptyset$, hence $A_k \in \mathcal{I}_W$. 

At this point, pick a set $A\subseteq \omega$ such that $A_k\setminus A\in \mathrm{Fin}$ for all $k \in \omega$. Set for convenience $b_{-1}:=0$ and define recursively a sequence of integers $(b_k: k \in \omega)$ such that $b_k$ is the smallest element of $A\cap A_k$ for which $d(y_k,x_{b_k})\le d(y_k,\eta)$ and $b_k>b_{k-1}$ (note that this is well defined). It follows by the triangle inequality that 
$$
\forall k \in \omega, \quad 
d(x_{b_k},\eta) \le d(x_{b_k},y_k)+d(y_k,\eta)\le 2d(y_k,\eta).
$$
Therefore $B:=\{b_k: k \in \omega\}$ is an infinite subset of $A$ such that $\lim \bm{x}\upharpoonright B=\eta$. Since $\eta \in W$, this proves that $\mathrm{L}_{\bm{x}\upharpoonright A} \cap W\neq \emptyset$. Hence, by the definition of $\mathcal{I}_W$, we conclude that $A \notin \mathcal{I}_W$. To sum up, $(A_k: k \in \omega)$ is a witnessing sequence of sets in $\mathcal{I}_W$ which fails the $P$-property for the ideal $\mathcal{I}_W$. 
\end{proof}

As a consequence, we obtain that there are only three possibilities for the complexity of ideals $\mathcal{I}_W$: 
\begin{cor}\label{cor:onlythree}
    Let $W$ be a subset of the Cantor space $X=\{0,1\}^\omega$. Then exactly one of the following cases occurs: 
    \begin{enumerate}[label={\rm (\roman{*})}]
    \item \label{item:1topologyIW} $\mathcal{I}_W$ is a $\Sigma^0_2$-ideal\textup{;}
    \item \label{item:2topologyIW} $\mathcal{I}_W$ is a Borel ideal which is not $\Sigma^0_3$\textup{;}
    \item \label{item:3topologyIW} $\mathcal{I}_W$ is not a Borel ideal. 
    \end{enumerate}
\end{cor}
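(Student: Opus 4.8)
The plan is to verify that the three listed properties are pairwise incompatible and that together they exhaust all possibilities; since they read respectively as ``$\Sigma^0_2$'', ``Borel but not $\Sigma^0_3$'', and ``not Borel'', their mutual exclusivity is immediate from the inclusion $\Sigma^0_2\subseteq \Sigma^0_3$ together with the fact that every $\Sigma^0_3$ set is Borel. Indeed, an ideal satisfying \ref{item:1topologyIW} is in particular $\Sigma^0_3$ and Borel, hence violates both \ref{item:2topologyIW} and \ref{item:3topologyIW}, while \ref{item:2topologyIW} and \ref{item:3topologyIW} contradict each other through their opposite Borelness clauses. Consequently the substance of the statement lies entirely in exhaustiveness, that is, in ruling out an ideal $\mathcal{I}_W$ that is $\Sigma^0_3$ but not $\Sigma^0_2$.

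To this end I would argue by cases on whether $W$ is closed. If $W$ is closed (this subsumes the degenerate case $W=\emptyset$, for which $\mathcal{I}_W=\mathcal{P}(\omega)$ is clopen), then Theorem \ref{thm:XiIWtopology}\ref{item:1Xitopology} gives at once that $\mathcal{I}_W$ is a $\Sigma^0_2$-ideal, so case \ref{item:1topologyIW} holds.

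Suppose instead that $W$ is not closed, so in particular $W\neq\emptyset$. If $\mathcal{I}_W$ fails to be Borel, we are immediately in case \ref{item:3topologyIW}. Otherwise $\mathcal{I}_W$ is Borel, hence analytic, and therefore enjoys the hereditary Baire property (as recalled in the Introduction); this is exactly what is needed to invoke Theorem \ref{thm:onlytwo}, whose equivalence \ref{item:1Wclosed}$\Leftrightarrow$\ref{item:3Wclosed} now yields that $\mathcal{I}_W$ is $\Pi^0_3$-hard. At this point I would reproduce the final step of the proof of Theorem \ref{thm:Gdeltasigmaideals}: being a Borel, $\Pi^0_3$-hard subset of the zero-dimensional Polish space $\{0,1\}^\omega$, the ideal $\mathcal{I}_W$ cannot be $\Sigma^0_3$ by \cite[Theorem 22.10]{K} (equivalently, $\Sigma^0_3$ is closed under continuous preimages whereas $\Pi^0_3\not\subseteq\Sigma^0_3$). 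Thus $\mathcal{I}_W$ is Borel but not $\Sigma^0_3$, placing it in case \ref{item:2topologyIW}.

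The only point demanding attention is the availability of the hereditary Baire property required by Theorem \ref{thm:onlytwo}: it is called upon solely in the subcase where $\mathcal{I}_W$ has already been assumed Borel, so the property is automatic and no circular reasoning creeps in. Granting this, the combination of Theorem \ref{thm:XiIWtopology}\ref{item:1Xitopology} for closed $W$ with Theorem \ref{thm:onlytwo} and the above dichotomy for non-closed $W$ shows that every Borel $\mathcal{I}_W$ is either $\Sigma^0_2$ or non-$\Sigma^0_3$, which together with the mutual exclusivity established at the outset completes the argument.
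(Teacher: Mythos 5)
Your proof is correct and follows essentially the same route as the paper's: mutual exclusivity from $\Sigma^0_2\subseteq\Sigma^0_3\subseteq\text{Borel}$, Theorem \ref{thm:XiIWtopology}\ref{item:1Xitopology} for closed $W$, and Theorem \ref{thm:onlytwo} combined with \cite[Theorem 22.10]{K} for non-closed $W$ with $\mathcal{I}_W$ Borel (the hereditary Baire property coming for free from Borelness). The only cosmetic difference is that you split first on closedness of $W$ and then on Borelness of $\mathcal{I}_W$, whereas the paper does the reverse; the ingredients and logic are identical.
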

\begin{proof}
First, if $W=\emptyset$, then $\mathcal{I}_W=\mathcal{P}(\omega)$, which satisfies only \ref{item:1topologyIW}. Hence suppose hereafter that $W$ is nonempty. Suppose also that $\mathcal{I}_W$ is Borel, so that \ref {item:3topologyIW} fails and $\mathcal{I}_W$ has the hereditary Baire property. If $W$ is closed then $\mathcal{I}_W$ is $\Sigma^0_2$ by Theorem \ref{thm:XiIWtopology}, hence \ref{item:1topologyIW} holds. If $W$ is not closed then $W$ is $\Pi^0_3$-hard by Theorem \ref{thm:onlytwo}, which is equivalent to be not $\Sigma^0_3$ by \cite[Theorem 22.10]{K}, hence \ref{item:2topologyIW} holds.
\end{proof}

In light of Theorem \ref{thm:XiIWtopology}, one may ask whether the third case in Corollary \ref{cor:onlythree} really occurs. We answer in the affirmative with our last main result: 
%%%%%%%%%%
\begin{thm}\label{thm:nonanalytic}
Let $W$ be a Borel subset of the Cantor space $X=\{0,1\}^\omega$ which is not $\Sigma^0_2$. Then $\mathcal{I}_W$ is 
%$\Pi^1_1$-hard. 
%
%In particular, $\mathcal{I}_W$ 
not analytic, hence not Borel. 
\end{thm}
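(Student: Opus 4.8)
The plan is to prove the contrapositive at the level of pointclasses: I want to show that if $\mathcal{I}_W$ were analytic, then $W$ would necessarily be $\Sigma^0_2$, contradicting the hypothesis. The key structural fact I will exploit is the explicit description of membership in $\mathcal{I}_W$ recalled after its definition: a set $A \subseteq \omega$ lies in $\mathcal{I}_W$ precisely when $\mathrm{L}_{\bm{x}\upharpoonright A}\cap W=\emptyset$, i.e. no subsequence of $(x_n:n\in A)$ converges to a point of $W$. Since $\bm{x}$ enumerates the (dense) rationals of the Cantor space and $X$ is compact metric, there is a tight correspondence between points $\eta\in X$ and the limit behaviour of subsequences of $\bm{x}$. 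I would first make this correspondence precise: for each $\eta\in X$ fix the canonical set $D_\eta:=\{n\in\omega : d(x_n,\eta)<2^{-n}\}$ (or a similar ``fast-converging'' selector), so that $\lim_{n\in D_\eta}x_n=\eta$ and, crucially, $D_\eta\notin\mathcal{I}_W$ if and only if $\eta\in\overline{W}$-type closure conditions hold; the point is that $D_\eta\in\mathcal{I}_W^+$ is closely tied to $\eta$ witnessing a limit point in $W$.

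The heart of the argument is a \emph{Wadge reduction from $W$ into $\mathcal{I}_W$}. I would construct a continuous map $\Theta:X\to\mathcal{P}(\omega)=\{0,1\}^\omega$, $\eta\mapsto A_\eta$, such that $\eta\in W$ if and only if $A_\eta\in\mathcal{I}_W^+$ (equivalently $\eta\notin W$ iff $A_\eta\in\mathcal{I}_W$). The natural candidate is to let $A_\eta$ be the set of indices $n$ for which $x_n$ approximates $\eta$ to within a rapidly shrinking tolerance; continuity of $\eta\mapsto A_\eta$ follows because membership $n\in A_\eta$ is decided by finitely much information about $\eta$ (the first few coordinates), using that the $x_n$ are finitely supported. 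Once such a $\Theta$ is in hand, $\Theta$ witnesses $W^c\le_{\mathrm{W}}\mathcal{I}_W$: if $\mathcal{I}_W$ were analytic ($\Sigma^1_1$), then $W^c$ would be analytic and $W$ coanalytic. This alone is not enough to contradict ``$W$ not $\Sigma^0_2$'', so the reduction must be arranged to land inside a \emph{simpler} pointclass.

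The sharper step, which I expect to be the main obstacle, is upgrading the complexity bookkeeping so that the analyticity of $\mathcal{I}_W$ forces $W\in\Sigma^0_2$ exactly. The cleanest route is to show that $W$ is Wadge-reducible to (a section of) $\mathcal{I}_W$ in a way that transfers the $\Sigma^0_2$-\emph{completeness barrier}: since $W$ is Borel but not $\Sigma^0_2$, by the Wadge hierarchy $W$ is either $\Pi^0_2$-hard or of strictly higher complexity, so it suffices to exhibit a $\Sigma^0_2$-hard or $\Pi^0_2$-hard set reducing into $\mathcal{I}_W$ whenever $W\notin\Sigma^0_2$, and then invoke that a $\Sigma^1_1$ set cannot be $\Sigma^0_2$-hard-above an arbitrarily complex Borel set — more directly, I would argue that if $\mathcal{I}_W$ is analytic then the map $\eta\mapsto A_\eta$ realizes $W$ as the continuous preimage of an analytic set, hence $W$ is analytic, and simultaneously realizes $W^c$ as a continuous preimage of an analytic set, making $W$ \emph{bi-analytic}, i.e. Borel — which is consistent, so the true contradiction must come from a finer reduction producing a $\Pi^0_3$-hardness or a genuine $\Sigma^0_2$-completeness mismatch.

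Concretely, I would instead combine the machinery already developed in the paper. By Theorem~\ref{thm:onlytwo}, if $W$ is not closed then $\mathcal{I}_W$ is $\Pi^0_3$-hard; but here $W$ need only fail to be $\Sigma^0_2$, and I would seek a parallel ``one level up'' lemma: a Rudin--Blass or Wadge reduction showing that $W\notin\Sigma^0_2$ forces a $\Sigma^1_1$-hardness (e.g. embedding a known $\Sigma^1_1$-complete set, such as the ill-founded trees or $\{A:\exists^\infty$-type diagonal) into $\mathcal{I}_W$ via the selector map $\Theta$. The decisive inequality is then: a $\Sigma^1_1$-complete (hence non-Borel) set Wadge-reduces into $\mathcal{I}_W$, so $\mathcal{I}_W\notin\Delta^1_1$; pushing this to $\Sigma^1_1$-hardness shows $\mathcal{I}_W$ cannot itself be analytic unless it is $\Sigma^1_1$-complete, and a separate parity/absoluteness argument (paralleling the $\Pi^0_3$-hardness proof in Lemma~\ref{lem:step2Gdeltasigma}) rules this out. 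I anticipate the genuine difficulty lies in making the selector $\Theta$ simultaneously continuous and complexity-faithful: one must ensure that the fast-approximation indices $A_\eta$ detect exactly the convergence-to-$W$ behaviour without collapsing the Borel rank of $W$, and that is where the compactness of $X$ and the density and finite-support structure of the enumeration $\bm{x}$ are used essentially.
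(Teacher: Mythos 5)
There is a genuine gap, and you partially diagnose it yourself. Your central device, a continuous selector $\Theta\colon X\to\mathcal{P}(\omega)$, $\eta\mapsto A_\eta$, with $A_\eta$ a set of indices of rationals fast-approximating $\eta$, satisfies $\mathrm{L}_{\bm{x}\upharpoonright A_\eta}=\{\eta\}$, hence $A_\eta\in\mathcal{I}_W$ iff $\eta\notin W$; this reduces $W^c$ to $\mathcal{I}_W$ and, as you concede, only yields that $W$ is Borel --- no contradiction. The obstruction is structural, not a matter of ``tightening the bookkeeping'': membership of $A$ in $\mathcal{I}_W$ quantifies over \emph{all} accumulation points of $\bm{x}\upharpoonright A$, a $\Pi^1_1$-shaped condition over an entire compact set, and no reduction whose domain is the point space $X$ can witness hardness beyond the class of $W$ itself. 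Your proposed repair --- ``embed a $\Sigma^1_1$-complete set such as the ill-founded trees into $\mathcal{I}_W$ via the selector map $\Theta$'' --- keeps the wrong domain, does not construct the embedding, and has the wrong polarity: to refute analyticity of $\mathcal{I}_W$ you must continuously reduce a $\Pi^1_1$-complete set into $\mathcal{I}_W$ (reducing a $\Sigma^1_1$-complete set would only refute coanalyticity). The closing appeal to a ``parity/absoluteness argument'' is not an argument.

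The missing construction, which is the content of the paper's proof, replaces points by pruned trees. First, since $W$ is Borel and not $\Sigma^0_2$, Wadge's theorem gives $W_{\mathrm{irr}}\le_{\mathrm{W}}W$ for the $\Pi^0_2$-complete set $W_{\mathrm{irr}}=\{z\colon\exists^\infty n\ z_n=1\}$; this reduction is then lifted functorially to the Polish space $\mathrm{PTr}_2$ of pruned trees, via the continuous map $T\mapsto\{\phi(z)\upharpoonright n\colon z\in[T],\,n\in\omega\}$ on the hyperspace, so that the $\Pi^1_1$-complete set $\mathrm{WF}^\star_2$ reduces to $\mathcal{T}(W)=\{T\colon[T]\cap W=\emptyset\}$. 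Second --- and this is the decisive idea absent from your proposal --- one codes trees into index sets: with $f(s)$ the index of the rational $(s_0,\dots,s_k,1,0,0,\dots)$, the continuous map $h(T):=\{f(s)\colon s\in T\}$ satisfies $\mathrm{L}_{\bm{x}\upharpoonright h(T)}=[T]$, so that $h(T)\in\mathcal{I}_W$ iff $[T]\cap W=\emptyset$. Composing the two reductions exhibits $\mathrm{WF}^\star_2$ as a continuous preimage of $\mathcal{I}_W$, and since analytic sets are closed under continuous preimages while $\mathrm{WF}^\star_2$ is not analytic, $\mathcal{I}_W$ is not analytic. Realizing an \emph{arbitrary} compact set $[T]$ exactly as a limit-point set $\mathrm{L}_{\bm{x}\upharpoonright A}$ for a continuously chosen $A$ is precisely what your selector cannot do, and without it the proof does not close.
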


%\textcolor{red}{[Complete, proof of Theorem \ref{thm:nonanalytic}]}

Putting together the above results, we have the following consequence:
\begin{cor}\label{cor:equivalencesBorelanalytic}
Let $W$ be a Borel subset of the Cantor space $X=\{0,1\}^\omega$. Then the following are equivalent\textup{:}
    \begin{enumerate}[label={\rm (\roman{*})}]
    \item \label{item:1lastcorollary} $W$ is $\Sigma^0_2$\textup{;}
    \item \label{item:2lastcorollary} $\mathcal{I}_W$ is a $\Pi^0_3$-ideal\textup{;}
    \item \label{item:3lastcorollary} $\mathcal{I}_W$ is a Borel ideal\textup{;}
    \item \label{item:4lastcorollary} $\mathcal{I}_W$ is an analytic ideal\textup{.}
    \end{enumerate}
%In particular, there are no Borel subset $W\subseteq X$ such that $\mathcal{I}_W$ is Borel and not $\Pi^0_3$. 
\end{cor}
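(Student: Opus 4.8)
The plan is to prove the contrapositive in effect: I will show that if $W$ is Borel but $\mathcal{I}_W$ is analytic, then $W$ must be $\Sigma^0_2$. Equivalently, assuming $W$ is Borel and not $\Sigma^0_2$, I derive a contradiction from the assumption that $\mathcal{I}_W$ is analytic. The starting observation is that Corollary \ref{cor:equivalencesBorelanalytic}\ref{item:4lastcorollary} and Theorem \ref{thm:onlytwo} are already available, so much of the combinatorial work is done: by Theorem \ref{thm:onlytwo}, if $W$ is not closed then $\mathcal{I}_W$ is $\Pi^0_3$-hard, and if $\mathcal{I}_W$ is additionally analytic, I want to say this forces $W$ to be low in complexity. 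The cleanest route is to combine the structural dichotomy already established with the known impossibility of an analytic (hence Borel, by Souslin) $\Pi^0_3$-hard set being too simple.

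The key steps, in order, are as follows. First I would assume toward a contradiction that $\mathcal{I}_W$ is analytic. Since $W$ is Borel and not $\Sigma^0_2$, in particular $W$ is not closed, so by Theorem \ref{thm:XiIWtopology} the easy case $W$ closed is excluded. I then invoke Theorem \ref{prop:Polish}: since $\mathcal{I}_W$ is analytic (in particular s-coanalytic would require care, so I instead work directly), the set of $\mathcal{I}_W$-limit points $\Lambda_{\bm{x}}(\mathcal{I}_W)$ is controlled. The crucial identity is Theorem \ref{thm:xiIW}, which gives $\Lambda_{\bm{x}}(\mathcal{I}_W)=W\cap \mathrm{L}_{\bm{x}}=W$ because $\bm{x}$ enumerates a dense set so $\mathrm{L}_{\bm{x}}=X$. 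Now if $\mathcal{I}_W$ were analytic, then by Theorem \ref{thm:xi}\ref{item:3xi} combined with the Farah property, or more directly by the complexity computations, $\mathscr{L}(\mathcal{I}_W)$ would be confined to a low pointclass; but $W\in \mathscr{L}(\mathcal{I}_W)$ and $W\notin \Sigma^0_2$, yielding the contradiction. The precise mechanism I would use is: an analytic ideal has the hereditary Baire property, so Corollary \ref{cor:onlythree} applies, and its trichotomy together with the hypothesis $W\notin\Sigma^0_2$ forces case \ref{item:2topologyIW}, i.e. $\mathcal{I}_W$ is Borel but not $\Sigma^0_3$; reconciling "$\mathcal{I}_W$ is analytic" with the sharper complexity bounds is where the argument closes.

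I expect the main obstacle to be pinning down exactly why analyticity of $\mathcal{I}_W$ propagates to a bound on the complexity of $W$ itself, rather than merely on $\mathscr{L}(\mathcal{I}_W)$. The natural tool is the definability of the map $W\mapsto \mathcal{I}_W$ together with a uniformization or reflection argument: since $\mathrm{L}_{\bm{x}\upharpoonright A}\cap W=\emptyset$ is the defining condition, one can read off $W$ from $\mathcal{I}_W$ via the countable dense image $(x_n)$. Concretely, $\eta\in \overline{W}$ (or $\eta\in W$ after accounting for the $G_\delta$ set of limit points) can be expressed by a Borel-in-$\mathcal{I}_W$ condition quantifying over subsequences converging to $\eta$; if $\mathcal{I}_W$ is analytic, this expresses $W$ as analytic, and by symmetry (using $\mathcal{I}_W^+$) also as coanalytic, whence $W$ is Borel with a controlled rank. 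The delicate point is converting "$W\notin\Sigma^0_2$" into a genuine contradiction with an explicit upper bound, which I anticipate requires invoking the $\Pi^0_3$-hardness from Theorem \ref{thm:onlytwo} together with Theorem \ref{thm:Gdeltasigmaideals} to rule out $\mathcal{I}_W$ being analytic while $W$ sits strictly above $\Sigma^0_2$.

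Finally, I would double back to make the reduction fully rigorous by exhibiting an explicit Wadge or Borel reduction of $W$ to $\mathcal{I}_W$ (or of its complement), mirroring the continuity argument in Lemma \ref{lem:step2Gdeltasigma}. Using that $\bm{x}$ enumerates the finitely supported points densely, for a prospective limit $\eta\in\{0,1\}^\omega$ one associates the set $A_\eta:=\{n:x_n \text{ agrees with } \eta \text{ on a long initial segment}\}$, chosen so that $\bm{x}\upharpoonright A_\eta$ converges to $\eta$; then $\eta\in W$ is equivalent to $A_\eta\in\mathcal{I}_W^+$, and $\eta\mapsto A_\eta$ is continuous. This shows $W\le_{\mathrm{W}}\mathcal{I}_W^+$, so if $\mathcal{I}_W$ were analytic then $\mathcal{I}_W^+$ is coanalytic and $W$ would be coanalytic; the analogous reduction using convergence along complementary subsequences gives $W$ analytic, so $W$ is Borel of bounded rank, contradicting $W\notin\Sigma^0_2$ once the rank bound is matched against the $\Pi^0_3$-hardness. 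This continuous-reduction step is the heart of the proof and the part I would write out in full detail.
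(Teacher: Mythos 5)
The implication \ref{item:1lastcorollary}$\implies$\ref{item:2lastcorollary} and the trivial chain \ref{item:2lastcorollary}$\implies$\ref{item:3lastcorollary}$\implies$\ref{item:4lastcorollary} are fine, but your argument for the hard direction \ref{item:4lastcorollary}$\implies$\ref{item:1lastcorollary} has a genuine gap. The reduction you construct at the end, $\eta\mapsto A_\eta$ with $\bm{x}\upharpoonright A_\eta\to\eta$, shows $W^c\le_{\mathrm{W}}\mathcal{I}_W$ (equivalently $W\le_{\mathrm{W}}\mathcal{I}_W^+$). A Wadge reduction transfers \emph{upper} bounds from the target to the source, so from ``$\mathcal{I}_W$ analytic'' you only get ``$W^c$ analytic'', i.e.\ $W$ coanalytic --- which is vacuous, since $W$ is assumed Borel. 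There is no ``analogous reduction giving $W$ analytic'' that would sharpen this, and in any case ``Borel of bounded rank'' never materializes into $\Sigma^0_2$. The deeper reason this route cannot work is that a single convergent subsequence detects only one limit point at a time, so this kind of reduction can never separate $\Sigma^0_2$ sets from Borel sets of higher rank. Your auxiliary steps also contain errors: ``analytic (hence Borel, by Souslin)'' is false (Souslin's theorem needs analytic \emph{and} coanalytic); $\Pi^0_3$-hardness is perfectly compatible with analyticity (any $\Sigma^1_1$-complete set is $\Pi^0_3$-hard), so Theorem \ref{thm:onlytwo} plus analyticity yields no contradiction; and the trichotomy of Corollary \ref{cor:onlythree} does not force case \ref{item:2topologyIW}, since an analytic ideal may well fall into the non-Borel case \ref{item:3topologyIW}.

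What is actually needed --- and what the paper uses --- is Theorem \ref{thm:nonanalytic}, stated just before the corollary: if $W$ is Borel and not $\Sigma^0_2$, then $\mathcal{I}_W$ is not analytic; its contrapositive is exactly \ref{item:4lastcorollary}$\implies$\ref{item:1lastcorollary}. The point of its proof is to upgrade your single-point reduction to a reduction from whole \emph{trees}: one builds a continuous $h:\mathrm{PTr}_2\to\mathcal{P}(\omega)$ with $\mathrm{L}_{\bm{x}\upharpoonright h(T)}=[T]$, so that $h(T)\in\mathcal{I}_W$ iff $[T]\cap W=\emptyset$, giving $\mathcal{T}(W)\le_{\mathrm{W}}\mathcal{I}_W$. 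Since $W$ is Borel and not $\Sigma^0_2$, it is $\Pi^0_2$-hard by \cite[Theorem 22.10]{K}, whence $W_{\mathrm{irr}}\le_{\mathrm{W}}W$ and $\mathrm{WF}^\star_2=\mathcal{T}(W_{\mathrm{irr}})\le_{\mathrm{W}}\mathcal{T}(W)\le_{\mathrm{W}}\mathcal{I}_W$; as $\mathrm{WF}^\star_2$ is $\Pi^1_1$-complete and analytic sets are closed under continuous preimages, $\mathcal{I}_W$ cannot be analytic. Without this (or an equivalent $\Pi^1_1$-hardness argument), your proposal does not close.
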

\begin{proof}
    \ref{item:1lastcorollary} $\implies$ \ref{item:2lastcorollary}. It follows by Theorem \ref{thm:XiIWtopology}\ref{item:1Xitopology}. 
    
    \ref{item:2lastcorollary} $\implies$ \ref{item:3lastcorollary} $\implies$ \ref{item:4lastcorollary}. They are obvious. 

%\ref{item:2lastcorollary} $\implies$ \ref{item:4lastcorollary}. It is obvious. 

    \ref{item:4lastcorollary} $\implies$ \ref{item:1lastcorollary}. It follows by Theorem \ref{thm:nonanalytic}. 
\end{proof}

%Before we proceed to the proof of Theorem \ref{thm:nonanalytic}, 
We recall now some definition about trees, which will be needed in the proof of Theorem \ref{thm:nonanalytic}. 

Denote by $\{0,1\}^{<\omega}$ the set of finite $\{0,1\}$-sequences. We say that $a=(a_0,\ldots,a_n) \in \{0,1\}^{<\omega}$ is an extension of $b=(b_0,\ldots,b_m) \in \{0,1\}^{<\omega}$, shortened as $a\subseteq b$, if $n\le m$ and $a_k=b_k$ for all $k\le n$. Given an infinite sequence $z \in \{0,1\}^{\omega}$, we define $z\upharpoonright n:=(z_0,\ldots,z_n) \in \{0,1\}^{<\omega}$ for each $n \in \omega$. A \emph{tree on} $\{0,1\}$ is a subset $T\subseteq \{0,1\}^{<\omega}$ with the property that, $a \in T$ for all $a,b \in \{0,1\}^{<\omega}$ such that $a\subseteq b \in T$. The \emph{body} of a tree $T$ on $\{0,1\}$, denoted by $[T]$, is the set of all its infinite branches, that is, the set of all sequences $z \in \{0,1\}^{\omega}$ such that $z\upharpoonright n \in T$ for all $n \in \omega$. 

A tree $T$ on $\{0,1\}$ is said to be \emph{pruned} if every $a \in T$ has a proper extension, that is, for all $ a\in T$ there exists $b \in T$ such that $a\neq b$ and $a\subseteq b$, see \cite[Definition 2.1]{K}. Accordingly, we define
$$
\mathrm{PTr}_2:=\{T\subseteq \{0,1\}^{<\omega}: T \text{ is a pruned tree }\}.
$$
Identifying a pruned tree with its characteristic function, the set $\mathrm{PTr}_2$ can be regarded as a closed subset of the Polish space $\{0,1\}^{\{0,1\}^{<\omega}}$, see \cite[Exercise 4.32]{K}. Thanks to Alexandrov's theorem \cite[Theorem 3.11]{K}, $\mathrm{PTr}_2$ is a Polish space. 

With the above premises, let $\mathrm{WF}^\star_2$ be the set of pruned trees on $\{0,1\}$ for which every infinite branch contains finitely many ones, that is, 
$$
\mathrm{WF}^\star_2:=\left\{T \in \mathrm{PTr}_2: \forall z \in [T], \forall^\infty n,\,\,  z_n=0\,\right\},
$$
see \cite[Section 33.A]{K}. It is known that $\mathrm{WF}^\star_2$ is a $\Pi^1_1$-complete subset of $\mathrm{PTr}_2$, hence it is conanalytic but not analytic, see \cite[Exercise 27.3]{K}. 

More generally, given a nonempty subset $W$ of the Cantor space $\{0,1\}^\omega$, let $\mathcal{T}(W)$ be the set of pruned trees $T$ on $\{0,1\}$ which contains an infinite branch in $W$, that is, 
$$
\mathcal{T}(W):=\left\{T \in \mathrm{PTr}_2: [T] \cap W \neq \emptyset\right\}. 
$$
It is immediate to check that $\mathcal{T}(W_\mathrm{irr})$ coincides with $\mathrm{WF}^\star_2$, where $W_\mathrm{irr}$ is the set of irrationals $\{z \in \{0,1\}^\omega: \exists^\infty n, \, z_n=1\}$. 

With the above premises, we are ready for the proof of Theorem \ref{thm:nonanalytic}.
\begin{proof}[Proof of Theorem \ref{thm:nonanalytic}]
    We divide the proof in some intermediate steps. Hereafter, as in the statement of the result, $W$ is a given Borel subset of the Cantor space which is not $\Sigma^0_2$.

    \begin{claim}\label{claim1:nonanalytic}
        Fix nonempty subsets $A,B\subseteq \{0,1\}^\omega$ such that $A\le_{\mathrm{W}}B$. Then 
\begin{equation}\label{eq:claimwasgetrees}
\mathcal{T}(A)\le_{\mathrm{W}}\mathcal{T}(B).
\end{equation}
    \end{claim}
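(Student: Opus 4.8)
The plan is to manufacture the Wadge reduction for trees directly out of the reduction witnessing $A\le_{\mathrm{W}}B$. Fix a continuous map $f\colon\{0,1\}^\omega\to\{0,1\}^\omega$ with $f^{-1}[B]=A$, so that $z\in A$ if and only if $f(z)\in B$. For $s\in\{0,1\}^{<\omega}$ write $N_s\subseteq\{0,1\}^\omega$ for the basic clopen set of those sequences admitting $s$ as an initial segment. The crucial observation is that, for every $T\in\mathrm{PTr}_2$, the body $[T]$ is a closed, hence compact, subset of the compact space $\{0,1\}^\omega$; therefore its continuous image $f[[T]]$ is compact, hence closed. Every closed $C\subseteq\{0,1\}^\omega$ is the body of the pruned tree $T_C:=\{s\in\{0,1\}^{<\omega}: C\cap N_s\neq\emptyset\}$, with $[T_C]=C$. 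Accordingly I would define
\[
\Phi\colon \mathrm{PTr}_2\to\mathrm{PTr}_2,
\qquad
\Phi(T):=T_{f[[T]]},
\]
which takes values in $\mathrm{PTr}_2$ and satisfies $[\Phi(T)]=f[[T]]$.

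Granting that $\Phi$ is well defined and continuous, correctness is immediate. For each $T\in\mathrm{PTr}_2$,
\begin{align*}
[\Phi(T)]\cap B\neq\emptyset
&\iff f[[T]]\cap B\neq\emptyset
\iff \exists z\in[T],\ f(z)\in B\\
&\iff \exists z\in[T],\ z\in A
\iff [T]\cap A\neq\emptyset,
\end{align*}
using only $[\Phi(T)]=f[[T]]$ and $f^{-1}[B]=A$. Hence $\Phi(T)\in\mathcal{T}(B)$ if and only if $T\in\mathcal{T}(A)$, that is, $\Phi^{-1}[\mathcal{T}(B)]=\mathcal{T}(A)$, which is exactly \eqref{eq:claimwasgetrees}.

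The main obstacle is the continuity of $\Phi$, and this is where I expect the real work to lie. Since $\mathrm{PTr}_2$ carries the subspace topology inherited from $\{0,1\}^{\{0,1\}^{<\omega}}$, it suffices to prove that for each fixed $s\in\{0,1\}^{<\omega}$ the set $\{T\in\mathrm{PTr}_2: s\in\Phi(T)\}$ is clopen. By definition $s\in\Phi(T)$ if and only if $f[[T]]\cap N_s\neq\emptyset$, equivalently $[T]\cap f^{-1}[N_s]\neq\emptyset$. Now $f^{-1}[N_s]$ is clopen in $\{0,1\}^\omega$, being the continuous preimage of a clopen set, and every clopen subset of the Cantor space is a finite union of cylinders, say $f^{-1}[N_s]=\bigcup_{t\in F}N_t$ for a finite $F\subseteq\{0,1\}^{<\omega}$. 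Thus $[T]\cap f^{-1}[N_s]\neq\emptyset$ if and only if $[T]\cap N_t\neq\emptyset$ for some $t\in F$. Here I would invoke prunedness: in a finitely branching pruned tree every node extends to an infinite branch, so $[T]\cap N_t\neq\emptyset$ precisely when $t\in T$. Consequently
\begin{align*}
\{T\in\mathrm{PTr}_2: s\in\Phi(T)\}
&=\{T\in\mathrm{PTr}_2: T\cap F\neq\emptyset\}\\
&=\bigcup_{t\in F}\{T\in\mathrm{PTr}_2: t\in T\},
\end{align*}
a finite union of basic clopen sets, hence clopen. This establishes continuity and completes the reduction.

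It remains only to record two routine verifications that I would fold into the first paragraph: that $T_C$ is genuinely a pruned tree with $[T_C]=C$ for every closed $C$, so that $\Phi$ lands in $\mathrm{PTr}_2$ (the degenerate case $C=\emptyset$ being handled separately, since then $T_C=\emptyset$); and the characterization $[T]\cap N_t\neq\emptyset\iff t\in T$ for pruned $T$, which is just the standard fact used above. Neither step presents any difficulty, so the only substantive point is the clopenness computation driven by the prunedness of $T$.
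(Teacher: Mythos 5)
Your proposal is correct and follows essentially the same route as the paper: both push the body $[T]$ forward through the reduction $f$ and re-encode the resulting compact set as a pruned tree, so your $\Phi$ coincides with the paper's map $T\mapsto\{f(z)\upharpoonright n: z\in[T],\,n\in\omega\}$. The only difference is that the paper obtains continuity by factoring through the Vietoris hyperspace of compact sets and citing the relevant exercises in Kechris, whereas you verify clopenness of $\{T: s\in\Phi(T)\}$ directly via compactness of $f^{-1}[N_s]$ and the fact that $[T]\cap N_t\neq\emptyset$ iff $t\in T$ for pruned $T$ --- a correct, self-contained substitute.
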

    \begin{proof}
        Since $A$ is Wadge reducible to $B$, there exists a continuous map $\phi: \{0,1\}^\omega \to \{0,1\}^\omega$ such that $\phi^{-1}[B]=A$, i.e., $z \in A$ if and only if $\phi(z) \in B$. Let $\mathcal{K}$ be the family of compact subsets of $\{0,1\}^\omega$, endowed with the Vietoris topology. It follows by \cite[Exercise 4.32]{K} that the map $\psi: \mathrm{PTr}_2 \to \mathcal{K}$ defined by 
$$
\forall T \in \mathrm{PTr}_2, \quad 
\psi(T):=[T].
$$
is a well-defined homeomorphism. In addition, its inverse map is given by $\psi^{-1}(K)=\{z\upharpoonright n: z \in K, n \in \omega\}$ for all $K \in \mathcal{K}$. Thanks to \cite[Exercise 4.29(vi)]{K}, the map $\varphi: \mathcal{K}\to \mathcal{K}$ defined by $\Phi(K):=\{\phi(z): z \in K\}$ is continuous. 

At this point, define the function $f: \mathrm{PTr}_2 \to \mathrm{PTr}_2$ by
$$
f:=\psi^{-1} \circ \Phi \circ \psi, 
$$
so that $f(T)=\{\phi(z)\upharpoonright n: z \in [T], n \in \omega\}$ for each pruned tree $T$.
Now, it is sufficient to show that $f$ is a witnessing map for the claimed Wadge reduction \eqref{eq:claimwasgetrees}. For, note that $f$ is a composition of three continuous maps, hence it is continuous. Moreover, the following chain of equivalences holds for each pruned tree $T$ on $\{0,1\}$: 
\begin{displaymath}
\begin{split}
    T \in \mathcal{T}(A) \quad & \text{ if and only if } \quad [T]\cap A=\emptyset, \\
    & \text{ if and only if } \quad \Phi([T])\cap B=\emptyset, \\
    & \text{ if and only if } \quad [f(T)]\cap B=\emptyset, \\
    & \text{ if and only if } \quad f(T) \in \mathcal{T}(B). \\
\end{split}
\end{displaymath}
Therefore $f^{-1}[\mathcal{T}(B)]=\mathcal{T}(A)$, which completes the proof. 
    \end{proof}

\medskip

\begin{claim}\label{claim2:nonanalytic}
$\mathrm{WF}^\star_2 \le_{\mathrm{W}} \mathcal{T}(W)$. 
\end{claim}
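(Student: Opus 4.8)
The plan is to obtain Claim \ref{claim2:nonanalytic} as an essentially one-line consequence of Claim \ref{claim1:nonanalytic}, once I have produced a Wadge reduction of $W_\mathrm{irr}$ to $W$. Indeed, recall from the discussion preceding Claim \ref{claim1:nonanalytic} that $\mathcal{T}(W_\mathrm{irr})=\mathrm{WF}^\star_2$. Hence, if I can establish $W_\mathrm{irr}\le_{\mathrm{W}}W$, then Claim \ref{claim1:nonanalytic} applied with $A:=W_\mathrm{irr}$ and $B:=W$ yields $\mathrm{WF}^\star_2=\mathcal{T}(W_\mathrm{irr})\le_{\mathrm{W}}\mathcal{T}(W)$, which is exactly the assertion. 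So the entire argument reduces to the single reduction $W_\mathrm{irr}\le_{\mathrm{W}}W$.

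To get this reduction, first I would note that $W_\mathrm{irr}=\bigcap_{N}\bigcup_{n\ge N}\{z\in\{0,1\}^\omega: z_n=1\}$ is a $G_\delta$, i.e. a $\Pi^0_2$ subset of the zero-dimensional Polish space $\{0,1\}^\omega$. On the other hand $W$ is Borel and, by hypothesis, not $\Sigma^0_2$; by the hardness dichotomy \cite[Theorem 22.10]{K} (the same tool already used in the proof of Theorem \ref{thm:Gdeltasigmaideals}), a Borel set fails to be $\Sigma^0_2$ precisely when it is $\Pi^0_2$-hard. Thus $W$ is $\Pi^0_2$-hard, and since $W_\mathrm{irr}$ is a $\Pi^0_2$ set living in a zero-dimensional Polish space, the very definition of $\Pi^0_2$-hardness furnishes a continuous $\phi\colon\{0,1\}^\omega\to\{0,1\}^\omega$ with $\phi^{-1}[W]=W_\mathrm{irr}$, that is, $W_\mathrm{irr}\le_{\mathrm{W}}W$.

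Alternatively, and avoiding the completeness machinery, I would argue directly from Wadge's lemma \cite[Theorem 21.14]{K}: applied to the Borel sets $W_\mathrm{irr}$ and $W$ it gives either $W_\mathrm{irr}\le_{\mathrm{W}}W$ or $W\le_{\mathrm{W}}W_\mathrm{irr}^c$. In the second case $W$ would be the continuous preimage of the $\Sigma^0_2$ set $W_\mathrm{irr}^c=\{z:\forall^\infty n,\ z_n=0\}$, hence $\Sigma^0_2$ itself, contradicting the hypothesis; so the first alternative must hold. Either route secures $W_\mathrm{irr}\le_{\mathrm{W}}W$, and then I conclude as in the first paragraph.

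The mathematical content here is genuinely light: the only real input is the Wadge-theoretic fact that a Borel set which is not $\Sigma^0_2$ must be $\Pi^0_2$-hard, which ultimately rests on Borel Wadge determinacy (Martin). I do not expect a serious obstacle; the only points requiring care are bookkeeping ones — checking that the ambient spaces in the hardness/Wadge statements are zero-dimensional Polish (they are, being copies of $\{0,1\}^\omega$), and invoking Claim \ref{claim1:nonanalytic} in the correct direction $A\le_{\mathrm{W}}B\Rightarrow\mathcal{T}(A)\le_{\mathrm{W}}\mathcal{T}(B)$ with $A=W_\mathrm{irr}$ and $B=W$. No delicate construction beyond Claim \ref{claim1:nonanalytic} is needed.
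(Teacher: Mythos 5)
Your proposal is correct and follows essentially the same route as the paper: both deduce from \cite[Theorem 22.10]{K} that the Borel non-$\Sigma^0_2$ set $W$ is $\Pi^0_2$-hard, hence $W_{\mathrm{irr}}\le_{\mathrm{W}}W$, and then apply Claim \ref{claim1:nonanalytic} to obtain $\mathrm{WF}^\star_2=\mathcal{T}(W_{\mathrm{irr}})\le_{\mathrm{W}}\mathcal{T}(W)$. Your alternative argument via Wadge's lemma is a harmless variant of the same dichotomy and does not change the substance.
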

\begin{proof}
    Thanks to \cite[Theorem 22.10]{K}, the Borel set $W$ is $\Pi^0_2$-hard, that is, $A\le_{\mathrm{W}} W$ for all $A \in \Pi^0_2$. In particular, $W_{\mathrm{irr}} \le_{\mathrm{W}} W$. Hence by Claim \ref{claim1:nonanalytic}, we conclude that $\mathrm{WF}^\star_2=\mathcal{T}(W_{\mathrm{irr}}) \le_{\mathrm{W}} \mathcal{T}(W)$. 
\end{proof}

\begin{comment}
\medskip

\begin{claim}\label{claim3:nonanalytic}
    $\mathcal{T}(W)$ is not analytic. 
\end{claim}
\begin{proof}
    By Claim \ref{claim2:nonanalytic}, there exists a continuous map $f: \mathrm{PTr}_2\to \mathrm{PTr}_2$ such that $f^{-1}[\mathcal{T}(W)]=\mathrm{WF}^\star_2$. The conclusion follows by the facts that $\mathrm{WF}^\star_2$ is $\Pi^1_1$-complete (hence, not analytic) and that analytic sets are closed under continuous preimages, see e.g. \cite[Proposition 4.1.2]{S}. 
\end{proof}
\end{comment}

\medskip

\begin{claim}\label{claim4:nonanalytic}
$\mathcal{T}(W)\le_{\mathrm{W}} \mathcal{I}_W$. 
%There exists a continuous map $h: \mathrm{PTr}_2 \to \mathcal{P}(\omega)$ such that 
%$$
%T \in \mathcal{T}(W) \,\,\,\text{ if and only if }\,\,\,h(T) \in \mathcal{I}_W
%$$
%for all pruned trees $T$ on $\{0,1\}$. 
\end{claim}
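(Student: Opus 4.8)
The plan is to produce a continuous map $g\colon \mathrm{PTr}_2 \to \mathcal{P}(\omega)$, where $\mathcal{P}(\omega)$ is identified with the Cantor space $\{0,1\}^\omega$, such that $g^{-1}[\mathcal{I}_W]=\mathcal{T}(W)$. The entire difficulty is concentrated in arranging that the set of ordinary limit points of the chosen subsequence recovers exactly the body of the tree, namely $\mathrm{L}_{\bm{x}\upharpoonright g(T)}=[T]$ for every $T \in \mathrm{PTr}_2$. Granting this, the definition of $\mathcal{I}_W$ gives at once that $g(T) \in \mathcal{I}_W$ if and only if $\mathrm{L}_{\bm{x}\upharpoonright g(T)}\cap W=\emptyset$, if and only if $[T]\cap W=\emptyset$, if and only if $T \in \mathcal{T}(W)$, which is precisely the desired Wadge reduction.

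To build $g$, I would tag each node of $T$ with a distinct rational sitting close to the branches through it. Concretely, to $s=(s_0,\dots,s_{k-1}) \in \{0,1\}^{<\omega}$ I associate the finitely supported point $r_s:=s^{\frown}(1,0,0,\dots)$, that is $s$ followed by a single $1$ at coordinate $k=|s|$ and zeros thereafter. Since $\bm{x}$ enumerates all rationals bijectively, each $r_s$ equals $x_{n(s)}$ for a unique index $n(s) \in \omega$, and I set $g(T):=\{n(s):s \in T\}$. Placing the $1$ immediately past $s$ makes $s \mapsto r_s$ injective (one recovers $|s|$ as the position of the rightmost $1$ of $r_s$, and then $s=r_s\upharpoonright(|s|-1)$), so $g(T)$ is a subset of $\omega$ selecting exactly the distinct rationals $\{r_s:s \in T\}$.

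Continuity of $g$ is then immediate coordinatewise. For fixed $n$, if $x_n=(0,0,\dots)$ then $n \notin g(T)$ for all $T$; otherwise $x_n=r_{s(n)}$ for the unique $s(n)$ obtained by deleting the rightmost $1$ of $x_n$, and $n \in g(T)$ if and only if $s(n) \in T$, which is a single clopen coordinate of $T$ in $\mathrm{PTr}_2$. Hence $\{T:n \in g(T)\}$ is clopen for every $n$, and $g$ is continuous.

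The heart of the argument, and the main obstacle, is verifying $\mathrm{L}_{\bm{x}\upharpoonright g(T)}=[T]$; note that, as the $r_s$ are pairwise distinct, these limit points coincide with the accumulation points of the set $\{r_s:s \in T\}$. For $[T]\subseteq \mathrm{L}_{\bm{x}\upharpoonright g(T)}$, given $z \in [T]$ the nodes $z\upharpoonright k$ lie in $T$ and $r_{z\upharpoonright k}\to z$ along pairwise distinct terms; the extra $1$ is exactly what secures this even when $z$ is itself finitely supported (a rational branch), where the naive tag $s \mapsto s^{\frown}(0,0,\dots)$ would collapse and fail to exhibit $z$ as a genuine limit point. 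For the reverse inclusion, if $r_{s_j}\to y$ along distinct $s_j \in T$ then $|s_j|\to\infty$ (only finitely many $r_s$ have bounded length), and convergence forces $y\upharpoonright m=s_j\upharpoonright m \in T$ for every $m$ once $|s_j|>m$; since $T$ is closed under initial segments, $y\upharpoonright m \in T$ for all $m$, i.e. $y \in [T]$. Thus no spurious accumulation points arise outside $[T]$, the equality $\mathrm{L}_{\bm{x}\upharpoonright g(T)}=[T]$ holds, and $g$ witnesses $\mathcal{T}(W)\le_{\mathrm{W}}\mathcal{I}_W$.
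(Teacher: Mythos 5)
Your proof is correct and takes essentially the same route as the paper's: the same tag $s\mapsto s^{\frown}(1,0,0,\ldots)$, the same reduction $T\mapsto\{n(s):s\in T\}$, the same coordinatewise continuity check, and the same key identity $\mathrm{L}_{\bm{x}\upharpoonright g(T)}=[T]$. (Note that, like the paper's own proof, you read $\mathcal{T}(W)$ as $\{T:[T]\cap W=\emptyset\}$, which is what the surrounding argument requires despite the ``$\neq\emptyset$'' appearing in the displayed definition.)
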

\begin{proof}
We need to show that there exists a continuous map $h: \mathrm{PTr}_2 \to \mathcal{P}(\omega)$ such that 
$$
T \in \mathcal{T}(W) \,\,\,\text{ if and only if }\,\,\,h(T) \in \mathcal{I}_W
$$
for all pruned trees $T$ on $\{0,1\}$. 

First, define the map $f: \{0,1\}^{<\omega} \to \omega$ as follows: for each $s=(s_0,\ldots,s_k) \in \{0,1\}^{<\omega}$, let $f(s)$ be the unique nonnegative integer such that 
$$
z_{f(s)}=(s_0,\ldots,s_k,1,0,0,\ldots).
$$
Of course, $f$ is injective. Recalling that $x_0=(0,0,\ldots)$ is the unique sequence with empty support, it is easy to see the image of $f$ is the set of positive integers. 

At this point, define the map $h: \mathrm{PTr}_2 \to \mathcal{P}(\omega)$ by 
$ 
h(T):=\{f(s): s \in T\}
$ 
for all $T \in \mathrm{PTr}_2$. Since $T$ is an infinite and $f$ is injective, then $h(T)$ is infinite as well. 
It is also not difficult to show that $h$ is continuous: indeed, a basic clopen set containing $h(T)$ is of the type 
$$
V:=\{S \in \omega: f(A)\subseteq S \text{ and }f(B)\cap S=\emptyset\}
$$
for some finite sets $A,B\subseteq \{0,1\}^{<\omega}$ such that every restriction of each $s \in A$ does not belong to $B$. Then, the set $U:=\{T \in \mathrm{PTr}_2: A \subseteq T \text{ and }B\cap T=\emptyset\}$ is an open set such that $h[U]\subseteq V$.

Next, we claim that \begin{equation}\label{eq:claimT}
[T]=\mathrm{L}_{\bm{x} \upharpoonright h(T)}.
\end{equation}

On the one hand, fix $a \in \mathrm{L}_{\bm{x} \upharpoonright h(T)}$. Then there exists a strictly increasing sequence of positive integers $(p_k)$ such that $\lim_k z_{p_k}=a$. For each $k \in \omega$, set $s^k:=f^{-1}(p_k)$ and define $\ell_k \in \omega$ so that $s^k=(s^k_0,s^k_1,\ldots,s^k_{\ell_k})$. Hence 
$$
z_{p_k}=z_{f(s^k)}=(s^k_0,s^k_1,\ldots,s^k_{\ell_k},1,0,0,\ldots).
$$
Fix $n \in \omega$. Then there exists $k_0 \in \omega$ such that $a \upharpoonright n=z_{p_k}\upharpoonright n$ for all $k\ge k_0$. Since $f$ is injective, there exists $k_1\ge \max\{n,k_0\}$ such that $\ell_{k_1} \ge n$. It follows that 
$$
a \upharpoonright n=z_{f(s^{k_1})}\upharpoonright n=(s^{k_1}_0,s^{k_1}_1,\ldots,s^{k_1}_{n})=s^{k_1}\upharpoonright n.
$$
Since $T$ is a tree and $s^{k_1} \in T$, we obtain $a \upharpoonright n \in T$. By the arbitrariness of $n$, it follows that $a$ is an infinite branch of $T$. Therefore $\mathrm{L}_{\bm{x} \upharpoonright h(T)}\subseteq [T]$. 

On the other hand, fix $a \in [T]$, so that $a \upharpoonright n \in T$ for all $n \in \omega$. It follows that $f(a \upharpoonright n) \in h(T)$ and $z_{f(a \upharpoonright n)}=(a_0,\ldots,a_n,1,0,0,\ldots)$ for all $n \in \omega$. It is clear that $\lim_n z_{f(a \upharpoonright n)}=a$. Therefore the opposite inclusion $[T]\subseteq \mathrm{L}_{\bm{x} \upharpoonright h(T)}$ holds. 

Recall that $T \in \mathcal{T}(W)$ if and only if $[T] \cap W=\emptyset$. Thanks to \eqref{eq:claimT}, this is equivalent to $\mathrm{L}_{\bm{x} \upharpoonright h(T)}\cap W=\emptyset$, that is, $h(T) \in \mathcal{I}_W$. 
\end{proof}

\medskip 

To conclude the proof, thanks to Claim \ref{claim2:nonanalytic}, Claim \ref{claim4:nonanalytic}, and the transitivity of $\le_{\mathrm{W}}$, we get $\mathrm{WF}^\star_2 \le_{\mathrm{W}} \mathcal{I}_W$. Hence there exists a continuous map $g: \mathrm{PTr}_2 \to \mathcal{P}(\omega)$ such that 
$$
g^{-1}[\mathcal{I}_W]=\mathrm{WF}^\star_2.
$$
The conclusion follows by the facts that $\mathrm{WF}^\star_2$ is $\Pi^1_1$-complete (hence, not analytic) and that analytic sets are closed under continuous preimages, see e.g. \cite[Proposition 4.1.2]{S}. 
%To sum up, thanks to Claim \ref{claim4:nonanalytic}, there exists a continuous map $h$ such that $h^{-1}[\mathcal{I}_W]=\mathcal{T}(W)$. 
%This concludes the proof of Theorem \ref{thm:nonanalytic} because continuous preimages of analytic sets are analytic, see e.g. \cite[Proposition 4.1.2]{S}, and $\mathcal{T}(W)$ is not analytic by Claim \ref{claim3:nonanalytic}.  
\end{proof}

It is possible to show that $\mathcal{I}_{W_{\mathrm{irr}}}$ is coanalytic, hence $\Pi^1_1$-complete. We leave as open question for the interested reader to check whether $\mathcal{I}_W$ is always coanalytic whenever $W\subseteq \{0,1\}^\omega$ is Borel and not $\Sigma^0_2$.

\begin{comment}
\bigskip

\section{To Check for the proof of Claim 8}

Let us show that $\mathcal{G}$ is coanalytic. Note that 
$$
A\in\mathcal{G}\iff \forall B\in\mathcal{P}(\mathbb N) (B\subseteq A \land x|B \to t\implies t\in \mathbb{Q}_{2^\mathbb{N}})
$$
One can show that the set
\[
\{(A,B)\in\mathcal{P}(\mathbb N)^2:(B\subseteq A \land x|B \to t\implies t\in \mathbb{Q}_{2^\mathbb{N}})\}
\]
is Borel. Thus $\mathcal{G}$ is coanalytic. 
\end{comment}

\section{Acknowledgments}

The authors are grateful to Rafal Filip\'{o}w and Adam Kwela (University of Gdansk, PL) for several discussions related to the results of the manuscript.

%\clearpage
%\nocite{*}
\bibliographystyle{amsplain}
\bibliography{ideale}

\end{document}